\newtheorem{thm}{Theorem}
\newtheorem{lem}[thm]{Lemma}
\newtheorem{cor}[thm]{Corollary}
\newtheorem{prop}[thm]{Proposition}
\newtheorem{definition}[thm]{Definition}
\theoremstyle{remark}
\newtheorem{rmk}[thm]{Remark}
\theoremstyle{definition}
\numberwithin{equation}{section}
\newcommand{\R}{\mathbb{R}}
\newcommand{\be}{\begin{equation}}
\newcommand{\ee}{\end{equation}}
\newcommand{\ben}{\begin{equation*}}
\newcommand{\een}{\end{equation*}}
\def\barroman#1{\sbox0{#1}\dimen0=\dimexpr\wd0+1pt\relax
  \makebox[\dimen0]{\rlap{\vrule width\dimen0 height 0.06ex depth 0.06ex}%
    \rlap{\vrule width\dimen0 height\dimexpr\ht0+0.03ex\relax 
            depth\dimexpr-\ht0+0.09ex\relax}%
    \kern.5pt#1\kern.5pt}}
\newcommand{\twopartdef}[4]
{
	\left\{
		\begin{array}{ll}
			#1 & \mbox{if  } #2 \bigskip \\
			#3 & \mbox{if  } #4
		\end{array}
	\right.
}
\def\XXint#1#2#3{{\setbox0=\hbox{$#1{#2#3}{\int}$}
     \vcenter{\hbox{$#2#3$}}\kern-.5\wd0}}
\newcommand{\N}{\mathbb{N}}
\newcommand{\h}{\mathscr{H}}
\newcommand{\emb}{\hookrightarrow}
\newcommand{\cemb}{\subset \subset}
\newcommand{\id}{\ d}
\newcommand{\D}{{\nabla}}
\newcommand{\eps}{{\varepsilon}}
\newcommand{\floor}[1]{\lfloor #1 \rfloor}
\def\XXint#1#2#3{{\setbox0=\hbox{$#1{#2#3}{\int}$}
     \vcenter{\hbox{$#2#3$}}\kern-.5\wd0}}
\begin{document} 	               
       \thanks{\textsl{Mathematics Subject Classification (MSC 2010):} Primary 53A10; Secondary 53C42, 49Q05.}
     \title[Bubbling analysis for free boundary minimal surfaces]{Bubbling analysis and geometric convergence results for free boundary minimal surfaces}
     \author{Lucas Ambrozio, Reto Buzano, Alessandro Carlotto and Ben Sharp}
     \address{ \noindent L. Ambrozio: Department of Mathematics, University of Warwick, Coventry CV4 7AL, United Kingdom, \textit{E-mail address: L.Coelho-Ambrozio@warwick.ac.uk}
     	\newline \newline 
     	\indent R. Buzano: School of Mathematical Sciences, Queen Mary University of London, London E1 4NS, United Kingdom, \textit{E-mail address: r.buzano@qmul.ac.uk}
     	\newline\newline
     	\indent A. Carlotto:  Department of Mathematics, ETH, 8092 Z\"urich, Switzerland, \textit{E-mail address: alessandro.carlotto@math.ethz.ch}
     	 \newline \newline 
     \indent B. Sharp: School of Mathematics,
     University of Leeds, Leeds LS2 9JT, United Kingdom \textit{E-mail address: B.G.Sharp@leeds.ac.uk}}

      	\begin{abstract} We investigate the limit behaviour of sequences of free boundary minimal hypersurfaces with bounded index and volume, by presenting a detailed blow-up analysis near the points where curvature concentration occurs. Thereby, we derive a general quantization identity for the total curvature functional, valid in ambient dimension less than eight and applicable to possibly improper limit hypersurfaces. In dimension three, this identity can be combined with the Gauss-Bonnet theorem to provide a constraint relating the topology of the free boundary minimal surfaces in a converging sequence, of their limit, and of the bubbles or half-bubbles that occur as blow-up models. We present various geometric applications of these tools, including a description of the behaviour of index one free boundary minimal surfaces inside a 3-manifold of non-negative scalar curvature and strictly mean convex boundary. In particular, in the case of compact, simply connected, strictly mean convex domains in $\R^3$ unconditional convergence occurs for all topological types except the disk and the annulus, and in those cases the possible degenerations are classified.
      	\end{abstract}

      	\maketitle   
      	
  
     	\section{Introduction}\label{sec:intro}
     	
     	In the last decade we have witnessed various significant developments in the study of free boundary minimal hypersurfaces, new conceptual links have emerged and diverse tools have been employed to produce novel existence results in different geometric settings. The expansion of this research direction has brought back attention to a number of classical open problems in the field and has motivated the extension of various foundational results to this setting.
     	
     Differently from the case of closed minimal surfaces, the free boundary theory is already very rich in the context of compact subdomains of $\R^3$, in fact even in the special but important instance of the unit ball: various constructions have been presented by Fraser and Schoen \cite{FS16} (genus zero and any number of boundary components), by Folha, Pacard and Zolotareva \cite{FPZ15} (genus zero or one and any sufficiently large number of boundary components), by Ketover \cite{Ket16A} and Kapouleas and Li \cite{KL17} (arbitrarily large genus and three boundary components) and by Kapouleas and Wiygul \cite{KW17} (arbitrarily large genus and one boundary component). In higher dimension, infinite families of examples have been found, in the Euclidean unit ball, by Freidin, Gulian and McGrath \cite{FGM16}. On the other hand, more general constructions like the min-max \`a la Almgren-Pitts or the degree-theoretic approach \`a la White have led to further results that are widely applicable: in that respect we shall mention here the results by Li \cite{Li15}, Li-Zhou \cite{LZ16B}, De Lellis-Ramic \cite{DR16} and Maximo-Nunes-Smith \cite{MNS13}. To those developments, one should add the older works that mostly appeal to the parametric approach (see \cite{Cou40, Cou50, Str84, GJ86A, Jo86, Jo86b, Jo87, Wan99, Fra00}  and references therein, among others). \\

     	Motivated by all these examples, we wish to proceed here in the compactness analysis of free boundary minimal hypersurfaces that was presented in \cite{ACS17}. In their earlier work \cite{FL14}, Fraser and Li proved that the set of free boundary minimal surfaces of fixed topological type is strongly compact in any Riemannian 3-manifold with non-negative Ricci curvature and convex boundary, where one considers smooth graphical convergence with multiplicity one. This sort of conclusion cannot be expected in higher dimension, or when the curvature assumptions are relaxed, and thus one needs to approach the problem from a different perspective. \\

     	First of all, we recall some basic definitions and notation. Given a compact Riemannian manifold $(N^{n+1},g)$ of dimension $n+1\geq 3$, with non-empty boundary $\partial N$, let $M^n$ be a codimension one properly embedded submanifold: we say that $M^n$ is a free boundary minimal hypersurface in $(N^{n+1},g)$ if it is a critical point for the $n$-dimensional area functional under variations satisfying the sole constraint that $\partial M\subset\partial N$ or, equivalently, if $M$ has zero mean curvature and meets the ambient boundary orthogonally. In order to avoid ambiguities, let us remark that properness is always understood here in the strong sense that $M \cap \partial N=\partial M$ (which is consistent with \cite{FL14, LZ16B, ACS17, ACS18b} among others).
    We let $\mathfrak{M}$ denote the class of smooth, connected, and properly embedded free boundary minimal hypersurfaces and consider the subclasses given by
     	\[
     		\mathfrak{M}(\Lambda,I) := \{ M \in \mathfrak{M} \ : \ \h^n(M) \leq \Lambda,\ index(M) \leq I\}
     			\]
     			so with the additional requirements that area and Morse index be bounded from above (cf. \cite{Sha15}); and for $p$ an integer greater or equal than one
     \[
     			\mathfrak{M}_p(\Lambda,\mu) := \{ M \in \mathfrak{M} \ : \ \h^n(M) \leq \Lambda,\ \lambda_p \geq -\mu\}
     			\]
     			so with area bounded from above and $p^{th}$ eigenvalue $\lambda_p$ of the Jacobi operator bounded from below (cf. \cite{ACS15}).

     		In \cite{ACS17}, Ambrozio-Carlotto-Sharp proved a compactness theorem for $\mathfrak{M}_p(\Lambda,\mu)$ when the ambient dimension is less than eight: given a sequence of free boundary minimal hypersurfaces $\left\{M_k\right\}$ in $\mathfrak{M}_p(\Lambda,\mu)$, there is some smooth limit to which the sequence sub-converges smoothly and graphically (with finite multiplicity $m$) away from a discrete set $\mathcal{Y}$ on the limit, where curvature concentration occurs. These results apply, as a special case, to all classes of the form $\mathfrak{M}(\Lambda,I)$ (since this is clearly the same as $\mathfrak{M}_{I+1}(\Lambda,0)$). 
     			A significant part of the present article is aimed at an accurate description of the local picture around those points of bad convergence, in analogy with the results obtained, for the closed case, by Buzano and Sharp in \cite{BS17}. We then specify these results to the three dimensional scenario, and derive various sorts of new geometric results based on these tools.	\\

     		As a first step in this program, we quantify the lack of smooth compactness (via a blow-up argument) by a finite number of non-trivial, complete, properly embedded minimal hypersurfaces of finite total curvature $\Sigma^n\emb \R^{n+1}$ as in \cite{BS17}, except this time it is possible that only `half' of $\Sigma$ is captured at a boundary point of bad convergence. In order to formalize this picture, let us introduce some terminology:
     		we employ the word \textsl{bubble} to denote a complete, connected and properly embedded minimal hypersurface of finite total curvature in $\R^{n+1}$; we employ the word \textsl{half-bubble} to denote a complete, connected, properly embedded minimal hypersurface that is contained in a half-space and has (non-empty) free boundary with respect to the boundary of this half-space,  and has finite total curvature. In both cases, the total curvature is understood to be the integral of the $n$-th power of the length of the second fundamental form, the corresponding functional being denoted by $\mathcal{A}(\cdot)$. Furthermore, we shall say that a (half-)bubble is \textsl{non-trivial} if it is not flat, namely if it is not a (half-)hyperplane.
     		
     		Roughly speaking, the blow-up limits one obtains are non-trivial bubbles at interior points of bad convergence, while at boundary points of bad convergence one may get either non-trivial bubbles or non-trivial half-bubbles. This assertion is formalized and expanded in the following two statements, the first one (Theorem \ref{thm:quant}) collecting the global outcome and the second one (Theorem \ref{thm:bubbles}) describing the local picture. 
     		
     		\begin{thm}\label{thm:quant}
     			Let $2\leq n\leq 6$ and $(N^{n+1},g)$ be a compact Riemannian manifold with boundary. For fixed $\Lambda, \mu\in \R_{\geq 0}$ and $p\in \N_{\geq 1}$, suppose that $\{M_k\}$ is a sequence in $\mathfrak{M}_p(\Lambda, \mu)$. Then there exist a smooth, connected, compact embedded minimal hypersurface $M\subset N$ meeting $\partial N$ orthogonally along $\partial M$, $m\in \mathbb{N}$ and a finite set $\mathcal{Y}\subset M$ with cardinality $|\mathcal{Y}|\leq p-1$  such that, up to subsequence, $M_k \to M$ locally smoothly and graphically on $M\setminus \mathcal{Y}$ with multiplicity $m$.  
     			Moreover there exists a finite number of non-trivial bubbles or half-bubbles $\{\Sigma_j\}_{j=1}^J$ with $J\leq p-1$ and
     			\begin{align*}
     			\mathcal{A}(M_k) &\to m\mathcal{A}(M) + \sum_{j=1}^J \mathcal{A}(\Sigma_j), \ \ \ (k\to\infty).
     			\end{align*}
     			For $k$ sufficiently large, the hypersurfaces $M_k$ of this subsequence are all diffeomorphic to one another. Finally, if $M\in\mathfrak{M}$ then $M\in\mathfrak{M}_{p}(\Lambda,\mu)$. 
     		\end{thm}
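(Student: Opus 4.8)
The plan is to combine the compactness theorem of Ambrozio-Carlotto-Sharp from \cite{ACS17} (which yields the smooth graphical subconvergence to a limit $M$ away from a finite set $\mathcal{Y}$, with $|\mathcal{Y}|\le p-1$ and finite multiplicity $m$) with a point-by-point blow-up analysis at each $y\in\mathcal{Y}$, in the spirit of Buzano-Sharp \cite{BS17} but adapted to the free boundary setting. First I would recall from \cite{ACS17} the structure of the convergence and fix, for each bad point $y$, suitable geodesic normal coordinates (Fermi coordinates along $\partial N$ when $y\in\partial N$) in which a small ambient ball $B_{r_0}(y)$ is quasi-isometric to a Euclidean ball (or half-ball). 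The key point is that, since $\text{index}(M_k)$ is controlled through the eigenvalue bound $\lambda_p\ge-\mu$ and $n\le 6$, one has an $\varepsilon$-regularity statement: there is $\varepsilon_0>0$ so that if $\int_{M_k\cap B_r(x)}|A_{M_k}|^n<\varepsilon_0$ then the curvature is pointwise bounded on the half-sized ball. This is what localizes the curvature concentration to the finite set $\mathcal Y$ and justifies rescaling.

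The heart of the argument is then a standard but delicate iterated rescaling (bubbling) procedure at each $y\in\mathcal{Y}$: choose points $p_k\to y$ and scales $r_k\to 0$ realizing a definite amount $\ge\varepsilon_0$ of curvature in $B_{r_k}(p_k)$, rescale the ambient metric by $r_k^{-2}$, and pass to a limit. Because $n\le 6$ one avoids the dimensions where minimal hypersurfaces can have singularities, so the rescaled limit is a smooth, complete, properly embedded minimal hypersurface in $\R^{n+1}$ or in a half-space $\R^{n+1}_+$ (the latter exactly when $p_k$ stays at bounded rescaled distance from $\partial N$, in which case the free boundary condition — orthogonal intersection — passes to the limit and one obtains a half-bubble meeting $\partial\R^{n+1}_+$ orthogonally); when $\text{dist}(p_k,\partial N)/r_k\to\infty$ one gets an interior bubble. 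Finiteness of the total curvature $\mathcal{A}(\Sigma_j)$ follows from the uniform bound $\mathcal{A}(M_k)\le C(\Lambda,\mu,p,N)$ — itself a consequence of the $\varepsilon$-regularity and the area bound — and from the fact that each bubble carries at least $\varepsilon_0$ of curvature, which simultaneously bounds the number of bubbles by $J\le p-1$. Non-triviality is automatic since each $\Sigma_j$ was extracted at a scale where curvature was non-negligible. The quantization identity $\mathcal{A}(M_k)\to m\mathcal{A}(M)+\sum_{j=1}^J\mathcal{A}(\Sigma_j)$ is obtained by the usual no-neck / no-curvature-loss argument: one shows that in the "neck regions" interpolating between consecutive scales the integral of $|A|^n$ tends to zero, so that the total curvature decomposes cleanly into the macroscopic part $m\mathcal A(M)$ and the sum of the microscopic contributions.

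The assertion that the $M_k$ are eventually all diffeomorphic to one another follows because, away from $\mathcal Y$, $M_k$ is a smooth $m$-sheeted graph over $M\setminus\mathcal Y$, and near each $y$ the surgery replacing the necks by the bubble(s) is topologically unambiguous for large $k$; hence the diffeomorphism type stabilizes. Finally, the statement "if $M\in\mathfrak{M}$ then $M\in\mathfrak M_p(\Lambda,\mu)$" is the cleanest part: $\mathcal H^n(M)\le\liminf_k\mathcal H^n(M_k)/m\le\Lambda$ (in fact $\le\Lambda$ directly since $m\ge1$), and the eigenvalue bound $\lambda_p(M)\ge-\mu$ passes to the limit by the lower semicontinuity of eigenvalues under smooth graphical convergence on compact pieces together with a standard argument showing that losing the finitely many points $\mathcal Y$ (capacity zero) does not create extra negative directions for the Jacobi quadratic form.

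I expect the main obstacle to be the \emph{neck analysis at boundary points}: controlling the annular/half-annular neck regions that may straddle $\partial N$, ensuring that the free boundary condition is respected along each intermediate scale, and proving the no-curvature-loss estimate there. Unlike the interior case treated in \cite{BS17}, here one must simultaneously handle the Neumann-type boundary condition, the possible presence of both bubbles and half-bubbles emanating from the \emph{same} point $y\in\partial N$, and the reflection/doubling trick needed to reduce the half-space analysis to the interior one; keeping track of the conformal/quasi-isometric distortion of the Fermi coordinates uniformly down the scales is the technical crux.
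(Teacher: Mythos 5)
Your overall architecture (start from \cite[Theorem 29]{ACS17}, blow up at each $y\in\mathcal{Y}$ in normal/Fermi coordinates, classify the limits as bubbles or half-bubbles, then run a no-neck argument and a covering argument for the diffeomorphism statement) matches the paper, and you correctly identify the boundary neck analysis and the reflection trick as the technical crux. However, the engine you propose for extracting the bubbles and counting them has a genuine gap. You base the selection of point-scale sequences on an $\varepsilon$-regularity statement for $\int|A|^n$ and claim that ``each bubble carries at least $\varepsilon_0$ of curvature, which simultaneously bounds the number of bubbles by $J\le p-1$.'' A curvature-budget argument of that type can only bound $J$ by $\mathcal{A}(M_k)/\varepsilon_0$, and in this theorem there is \emph{no a priori bound on} $\mathcal{A}(M_k)$: the uniform total curvature bound for the class $\mathfrak{M}_p(\Lambda,\mu)$ is Corollary \ref{cor:totcurvbound}, a \emph{consequence} of the quantization identity, not an input ($\varepsilon$-regularity plus an area bound does not yield a total curvature bound, so your parenthetical derivation of it is circular). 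The hypothesis actually being exploited is spectral: in the paper the radii $r_k^i$ are defined as infima of radii on which some piece of $M_k$ is \emph{unstable}, the set $\mathcal{Y}$ is characterized by coalescing index (Lemma \ref{lemma:local}), each new admissible point-scale sequence produces a region, disjoint from the previous ones, on which $\lambda_1\to-\infty$ (Theorem \ref{thm:blowup}), and the bound $\lambda_p\ge-\mu$ then caps the number of such regions, hence $J$ and $|\mathcal{Y}|$, by $p-1$. Likewise, non-triviality of each blow-up limit is proved by showing it cannot be stable (else $M_k$ would be stable on the defining ball, contradicting the choice of $r_k^i$), and finiteness of $\mathcal{A}(\Sigma_j)$ comes from finite index plus the Fischer-Colbrie/Tysk-type equivalence (Proposition \ref{pro:equiv}), not from a curvature budget.

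A secondary point: in the equality case of the quantization you need the lower bound (Corollary \ref{cor:bubbles}) to come from regions that are \emph{mutually disjoint} across different strings and scales, which is exactly what the separation conditions \eqref{eq.psinf}--\eqref{eq.psinf2} in the point-scale selection guarantee; your sketch does not explain how overlapping scales (bubbles forming ``on top of'' one another, the strings of Case 2) are handled, and for $n\ge3$ these genuinely occur. Your treatment of the final clause ($M\in\mathfrak{M}$ implies $M\in\mathfrak{M}_p(\Lambda,\mu)$) is essentially the paper's argument and is fine.
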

     		
     		\begin{rmk}\label{rem:properness}
     		Concerning the very last clause, we shall recall that the class $\mathfrak{M}_p(\Lambda,\mu)$ (and, more generally, the whole class $\mathfrak{M}$) is in general not closed under smooth graphical convergence with multiplicity one: easy examples of non-convex domains in $\R^3$ show that the limit of elements in $\mathfrak{M}$ may be not properly embedded, and in fact have a large contact set with the boundary of the ambient manifold. Following \cite{ACS17}, we introduce the following general assumption:
     		
     		\begin{center}
     			$(\textbf{P})$ \ \ if $M\subset N$  has zero mean curvature and meets the boundary of the ambient manifold orthogonally along its own boundary, then it is proper.
     		\end{center}
     		For instance, this condition is implied by the geometric requirement that $\partial N$ is mean convex (i.e. $H\geq 0$) and has no minimal components, which is condition $(\textbf{C})$ in \cite{ACS17}.
     		If we assume that $(N^{n+1},g)$ satisfies assumption $(\textbf{P})$, then the limit $M$ must be properly embedded, and thus $M\in\mathfrak{M}_{p}(\Lambda,\mu)$ always holds. Without this assumption, an improper contact set with $\partial N$ may occur, and even the definition of Morse index becomes delicate and much less canonical than in the proper case (for a discussion of the improper case, motivated by min-max constructions, see \cite{GZ18}).
     		\end{rmk}	
     		
     		Let us now add some comments on the significance and straightforward geometric implications of Theorem \ref{thm:quant}.
     		In ambient dimension three (corresponding to $n=2$), the total curvature of any bubble is an integer multiple of $8\pi$ (cf. \cite{Oss63, Oss64}) and thus the total curvature of any half-bubble is an integer multiple of $4\pi$: hence Theorem \ref{thm:quant} implies that for a sequence of surfaces that eventually satisfy $\mathcal{A}(M_k)\leq 4\pi-\delta$ for some $\delta>0$, the set $\mathcal{Y}$ must be empty and the convergence to $M$ is smooth and graphical everywhere (but possibly with higher multiplicity, which however will not happen if the limit is two-sided).  \\
     		
     		As a second direct application, we can prove a uniform bound on the total curvature, and of the Morse index, of any given set $\mathfrak{M}_p(\Lambda,\mu)$:
     		
     		\begin{cor}\label{cor:totcurvbound}
     			Let $2\leq n\leq 6$ and $(N^{n+1},g)$ be a compact Riemannian manifold with boundary. Given $\Lambda, \mu\in \R_{\geq 0}$ and $p\in\N_{\geq 1}$ there exist:
     			\begin{enumerate}
     			 \item{a constant $C=C(p,\Lambda,\mu, N, g)$ such that the total curvature of any element in $\mathfrak{M}_p(\Lambda, \mu)$ is bounded from above by $C$.}
     			 \item{ a constant $I=I(p,\Lambda,\mu, N, g)$ such that the Morse index of any element in $\mathfrak{M}_p(\Lambda, \mu)$ is bounded from above by $I$, so that $\mathfrak{M}_p(\Lambda,\mu)\subset\mathfrak{M}(\Lambda,I)$. }
     			 \end{enumerate}
     		\end{cor}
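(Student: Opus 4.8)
The plan is to obtain both items as essentially immediate consequences of the quantization Theorem \ref{thm:quant}, arguing by contradiction; item (2) requires in addition a standard a priori estimate bounding the Morse index in terms of area and total curvature.

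For item (1), I would suppose it fails and pick a sequence $\{M_k\}\subset\mathfrak{M}_p(\Lambda,\mu)$ with $\mathcal{A}(M_k)\to\infty$. Applying Theorem \ref{thm:quant} to $\{M_k\}$ produces, along a subsequence, a smooth compact embedded minimal hypersurface $M\subset N$ — for which $\mathcal{A}(M)=\int_M|A|^n\,d\mathcal{H}^n<\infty$ by smoothness and compactness — a multiplicity $m\in\N$, and finitely many non-trivial bubbles or half-bubbles $\{\Sigma_j\}_{j=1}^{J}$ with $J\le p-1$, each of finite total curvature by definition, such that $\mathcal{A}(M_k)\to m\mathcal{A}(M)+\sum_{j=1}^{J}\mathcal{A}(\Sigma_j)<\infty$. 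Since a subsequence of a sequence diverging to $+\infty$ still diverges, this is absurd; hence $C:=\sup\{\mathcal{A}(M):M\in\mathfrak{M}_p(\Lambda,\mu)\}$ is finite and depends only on $p,\Lambda,\mu,N,g$.

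For item (2), I would first recall the index estimate
\[
\mathrm{index}(M)\le C(n,N,g)\big(1+\mathcal{H}^n(M)+\mathcal{A}(M)\big)
\]
valid for every free boundary minimal hypersurface in $(N^{n+1},g)$ with $2\le n\le 6$. Its proof is the usual covering argument: the $\varepsilon$-regularity underlying the compactness theory (as in \cite{ACS17}, paralleling \cite{BS17} in the closed case) provides $\varepsilon_0,r_0>0$, depending only on $n$ and $(N,g)$, such that a ball $B_r(x)$ with $r\le r_0$ on which $\int_{M\cap B_{2r}(x)}|A|^n<\varepsilon_0$ supports no nontrivial negative direction of the free boundary Jacobi quadratic form; a Vitali-type covering then bounds the number of ``bad'' balls by $C\varepsilon_0^{-1}\mathcal{A}(M)$ and the total number of balls by $C r_0^{-n}\mathcal{H}^n(M)$, and splitting a candidate negative subspace across this cover yields the displayed inequality. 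Inserting $\mathcal{H}^n(M)\le\Lambda$ and the constant $C$ from item (1) produces the required uniform bound $\mathrm{index}(M)\le I(p,\Lambda,\mu,N,g)$, which is exactly the inclusion $\mathfrak{M}_p(\Lambda,\mu)\subset\mathfrak{M}(\Lambda,I)$.

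The step I expect to demand the most care is the local index-localization lemma near $\partial N$: one has to verify that, on a small ball meeting the boundary, small total curvature still precludes negative directions of the Jacobi form among variations that need not vanish on $\partial M$, so that the covering argument genuinely promotes the pointwise stability picture to a global index bound. This is handled as in the interior case after the appropriate boundary reflection / boundary $\varepsilon$-regularity, so no essentially new ingredient is involved beyond what Theorem \ref{thm:quant} already rests upon.
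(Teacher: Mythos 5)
Your argument for item (1) is exactly what the corollary is meant to follow from, and it is correct as written: any sequence realizing $\sup\mathcal{A}$ sub-converges by Theorem \ref{thm:quant} to a finite limit $m\mathcal{A}(M)+\sum_j\mathcal{A}(\Sigma_j)$, ruling out divergence of the total curvature.

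For item (2), your route is morally sound but you are re-deriving (in abbreviated form) an estimate that the paper already outsources. Remark \ref{rem:lima} points to V.~Lima's work \cite{Lim17} (extending Cheng--Tysk \cite{CT94} and Ejiri--Micallef \cite{EM08} to the free boundary setting), which says precisely that for $n\geq 3$ a bound on $\h^n(M)$ and on $\mathcal{A}(M)$ bounds the Morse index, and that for $n=2$ a bound on area and on topology does so (for orientable surfaces). Given item (1) and, for $n=2$, the eventual stabilization of diffeomorphism type from Theorem \ref{thm:quant} (or the topology bound extracted from $\mathcal{A}$ via Gauss--Bonnet once one controls $\h^1(\partial M)$), item (2) then follows in one line. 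By contrast, your covering sketch leaves the genuinely delicate step as a gesture: the phrase ``splitting a candidate negative subspace across this cover'' hides the partition-of-unity identity
\[
Q(\phi,\phi)=\sum_j Q(\rho_j\phi,\rho_j\phi)-\sum_j\int|\nabla\rho_j|^2\phi^2,
\]
whose gradient error is of the same order $r_0^{-2}\int\phi^2$ as the Poincar\'e coercivity on good balls, so the good balls must be shown \emph{strictly} coercive with a constant beating the overlap, and the bad balls contribute a finite-dimensional subspace only after projecting away their local negative eigenspaces. This is exactly the technical content of \cite{CT94}/\cite{Lim17}, not a soft Vitali count. Moreover the inequality $\mathrm{index}(M)\leq C(1+\h^n(M)+\mathcal{A}(M))$ is not available off the shelf when $n=2$ (the Cwikel--Lieb--Rozenblum-type counting fails in dimension two); there one passes through topology via Gauss--Bonnet plus a boundary length bound, e.g.\ via the argument used for Proposition \ref{prop:boundaryconv}. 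In short: item (1) is right; item (2) is reachable by your approach but as written is not a proof --- it is an outline of Lima's theorem, which you would do better to simply invoke.
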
	
     		
     		Lastly, a key point in the statement of Theorem \ref{thm:quant} is that the topology (in fact: the diffeomorphism type) of the hypersurfaces in the sequence $\left\{M_k\right\}$ eventually stabilizes. Following the notation of \cite{ACS17}, given two smooth manifolds $M_1, M_2$ (possibly with non-empty boundary), we write $M_1\simeq M_2$ if they are diffeomorphic, and for a subset $\mathfrak{S}\subset\mathfrak{M}$ we let $\mathfrak{S}/\simeq $ denote the set of corresponding equivalence classes modulo diffeomorphisms.
     		
     		\begin{cor}\label{cor:finite}
     			Let $2\leq n\leq 6$ and $(N^{n+1},g)$ be a compact Riemannian manifold with boundary. Given $\Lambda, \mu\in \R_{\geq 0}$ and $p\in \N_{\geq 1}$ the quotient $\mathfrak{M}_p(\Lambda, \mu)/\simeq$ is finite.
     		\end{cor}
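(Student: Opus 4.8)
The plan is to derive the statement directly from the last assertion of Theorem \ref{thm:quant} --- that from any sequence in $\mathfrak{M}_p(\Lambda,\mu)$ one can extract a subsequence whose members are eventually pairwise diffeomorphic --- by a routine contradiction argument. No further geometric input is needed: all the substantial work has already gone into Theorem \ref{thm:quant}, and in particular into the blow-up analysis underpinning its diffeomorphism-stabilization clause.

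Concretely, I would argue as follows. If $\mathfrak{M}_p(\Lambda,\mu)$ is empty the claim is vacuous, so assume it is not, and suppose for contradiction that the quotient $\mathfrak{M}_p(\Lambda,\mu)/\simeq$ is infinite. Then one can select a sequence $\{M_k\}_{k\in\N}\subset\mathfrak{M}_p(\Lambda,\mu)$ of representatives of infinitely many pairwise distinct equivalence classes, so that $M_j\not\simeq M_k$ whenever $j\neq k$. Since $2\leq n\leq 6$, Theorem \ref{thm:quant} applies to this sequence: passing to a subsequence $\{M_{k_i}\}_i$, there is an index $i_0$ such that $M_{k_i}\simeq M_{k_{i'}}$ for all $i,i'\geq i_0$. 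In particular two distinct indices along the subsequence yield diffeomorphic hypersurfaces, contradicting the choice of $\{M_k\}$. Hence $\mathfrak{M}_p(\Lambda,\mu)/\simeq$ is finite.

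I expect no genuine obstacle here; the only points to handle with any care are purely formal --- namely, extracting the representatives so that distinctness of classes does force non-diffeomorphic members, and correctly reading off the stabilization conclusion of Theorem \ref{thm:quant} after passing to the subsequence. The same reasoning specializes verbatim to $\mathfrak{M}(\Lambda,I)=\mathfrak{M}_{I+1}(\Lambda,0)$, so that classes of free boundary minimal hypersurfaces with bounded area and bounded index realize only finitely many diffeomorphism types as well; and combined with Corollary \ref{cor:totcurvbound} this additionally records that along $\mathfrak{M}_p(\Lambda,\mu)$ the total curvature and the Morse index remain uniformly controlled while the topology varies over a finite list.
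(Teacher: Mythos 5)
Your proof is correct and matches the paper's (implicit) approach: the paper does not spell out a separate argument for this corollary, indicating only that it follows from the diffeomorphism-stabilization clause of Theorem \ref{thm:quant}, and your contradiction/subsequence argument is exactly the intended routine deduction.
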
	
     		
     		This is the unconditional, general counterpart of Corollary 7 in \cite{ACS17}, which was proven to hold true under suitable geometric assumptions on the ambient manifold, namely requiring that the Ricci curvature of $(N,g)$ be non-negative and $\partial N$ be strictly convex or that the Ricci curvature of $(N,g)$ be non-negative and $\partial N$ be convex (and strictly mean convex). The fact that one can get rid of extra hypotheses relies, roughly speaking, on the fact that we can get a good understanding of the structure of each hypersurface $M_k$ near the points where its curvature is large, at least for $k$ big enough.
     	The last assertion is justified by the aforementioned local description result:

     	\begin{thm}\label{thm:bubbles}
     		With the setup as in Theorem \ref{thm:quant}, for each $y\in \mathcal{Y}$ there exist a finite number of point-scale sequences $\{(p^i_k, r^i_k)\}_{i=1}^{J_y}$ where $\sum_{y\in \mathcal{Y}} J_y \leq p-1$ with $M_k \ni p^i_k \to y$, $r^i_k\to 0$, and finite numbers of non-trivial bubbles and half-bubbles $\{\Sigma_i\}_{i=1}^{J_y}$, such that the following is true.
     		\begin{itemize}
     			\item For all $i\neq j$, we have 
     			\begin{equation*}
     			\frac{r^i_k}{r^j_k} + \frac{r^j_k}{r^i_k} + \frac{dist_g(p^i_k, p^j_k)}{r^i_k + r^j_k} \to \infty.
     			\end{equation*}
     			Taking normal coordinates centered at $p^i_k$, then $\widetilde{M}^i_k:=\frac{M_k}{r^i_k}$ converges locally smoothly and graphically, away from the origin, to a disjoint union of (half-)hyperplanes and at least one non-trivial bubble or half-bubble.  The convergence to any non-trivial component of the limit occurs with multiplicity one. 
     			\item Given any other sequence $M_k \ni q_k$ and $\varrho_k \to 0$ with $q_k \to y$ and 
     			\begin{equation*}
     			\min_{i=1,\dots J_y} \Big(\frac{\varrho_k}{r^i_k} + \frac{r^i_k}{\varrho_k} + \frac{dist_g(q_k,p^i_k)}{\varrho_k + r^i_k}\Big)\to \infty
     			\end{equation*}
     			then taking normal coordinates at $q_k$, we obtain that $\widehat{M}_k:= \frac{M_k}{\varrho_k}$ converges to a collection of parallel (half-)hyperplanes. 
     		\end{itemize}
     		When $n=2$, any blow-up limit of $\widetilde{M}^i_k$ is always connected. The convergence is locally smooth, and of multiplicity one. Moreover we always have 
     			\[
     		  (*) \ \ \ \	\ \	\frac{dist_g(p^i_k,p^j_k)}{r^i_k + r^j_k}\to \infty .
     			\]
     		\end{thm}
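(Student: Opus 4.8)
The plan is to establish Theorem~\ref{thm:bubbles} by an inductive blow-up (bubble-tree) construction, taking as input the compactness result of \cite{ACS17} recalled just before Theorem~\ref{thm:quant}: it already supplies the limit $M$, the multiplicity $m$, and the finite set $\mathcal{Y}$, together with smooth graphical subconvergence $M_k\to M$ on $M\setminus\mathcal{Y}$; moreover the $\varepsilon$-regularity theorem of \cite{ACS17} for free boundary minimal hypersurfaces tells us that at every $y\in\mathcal{Y}$ a definite amount of curvature concentrates, i.e. $\sup_{B_\rho(y)\cap M_k}|A_{M_k}|\to\infty$ and $\int_{B_\rho(y)\cap M_k}|A_{M_k}|^n\geq\varepsilon_0$ for all small $\rho>0$ and $k$ large. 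Fix such a $y$ and work in a ball around it meeting $\mathcal{Y}$ only at $y$. The first step is a point-selection argument in the free boundary setting (in the spirit of Schoen and of \cite{BS17}): one chooses $p^1_k\in M_k$ near $y$ and $r^1_k:=|A_{M_k}(p^1_k)|^{-1}\to 0$ so that, in $g$-geodesic normal coordinates centred at $p^1_k$ rescaled by $1/r^1_k$, the hypersurfaces $\widetilde M^1_k:=M_k/r^1_k$ have $|A|$ bounded on every fixed ball with $|A|(0)=1$, and uniformly bounded area ratios by monotonicity. The rescaled ambient metrics converge to the flat one in $C^\infty_{\mathrm{loc}}$, and $\partial N$ either recedes to infinity (when $\mathrm{dist}_g(p^1_k,\partial N)/r^1_k\to\infty$) or converges to a hyperplane that $\widetilde M^1_k$ meets orthogonally (when that ratio stays bounded). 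Standard compactness for minimal hypersurfaces with locally bounded curvature — in its free boundary version in the second regime — gives, along a subsequence, $C^\infty_{\mathrm{loc}}$ convergence of $\widetilde M^1_k$ to a complete, properly embedded minimal hypersurface in $\R^{n+1}$ which is non-trivial (as $|A|(0)=1$) and of finite total curvature, since on any fixed $B_R$ one has $\int_{\widetilde M^1_k\cap B_R}|A|^n=\int_{M_k\cap B_{Rr^1_k}(p^1_k)}|A_{M_k}|^n\leq\int_{M_k}|A_{M_k}|^n\leq C$ (the uniform energy bound being available from \cite{ACS17}, as in \cite{BS17}), so letting $R\to\infty$ and using the convergence one bounds $\int|A|^n$. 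This limit is a non-trivial bubble in the first regime and a non-trivial half-bubble in the second; it is the distinguished non-trivial component $\Sigma_1$ of the limit of $\widetilde M^1_k$, the remaining components being flat (half-)hyperplanes.

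One then iterates. Given $\Sigma_1,\dots,\Sigma_i$ with point-scale sequences $(p^1_k,r^1_k),\dots,(p^i_k,r^i_k)$ already extracted near $y$, one asks whether there remains a sequence $q_k\to y$, $\varrho_k\to 0$ with $\min_{\ell\leq i}\!\big(\varrho_k/r^\ell_k+r^\ell_k/\varrho_k+\mathrm{dist}_g(q_k,p^\ell_k)/(\varrho_k+r^\ell_k)\big)\to\infty$ along which curvature still fails to stay bounded after rescaling by $\varrho_k$. If so, the point-selection is repeated inside this ``leftover'' region, producing $\Sigma_{i+1}$ and $(p^{i+1}_k,r^{i+1}_k)$ incomparable with every earlier sequence — which is exactly the first displayed separation property. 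If not, the process stops with $J_y:=i$, and the second bullet holds, any $q_k,\varrho_k$ satisfying the stated incompatibility rescaling only to parallel flat (half-)hyperplanes. The technical core of this step is the no-curvature-loss estimate in the neck regions, exactly as in \cite{BS17} for the interior case: in each annulus interpolating between consecutive scales the surfaces are, up to small errors, unions of graphs over pieces of (half-)hyperplanes with quadratically decaying curvature. This forces the stated convergence of $\widetilde M^i_k$ to ``(half-)hyperplanes plus at least one non-trivial (half-)bubble'', and, upon summing the energy over all scales, yields the quantization identity $\mathcal{A}(M_k)\to m\mathcal{A}(M)+\sum_j\mathcal{A}(\Sigma_j)$ of Theorem~\ref{thm:quant}; multiplicity one along every non-trivial component follows as in the closed case from embeddedness and the maximum principle.

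It remains to show that at every stage of this iteration the number of bubbles extracted so far, over all $y\in\mathcal{Y}$, is at most $p-1$; this simultaneously guarantees termination, gives $\sum_{y}J_y\leq p-1$, and (since $J_y\geq 1$) recovers $|\mathcal{Y}|\leq p-1$. Here the eigenvalue hypothesis enters. Every non-trivial bubble or half-bubble is unstable: for bubbles this is classical in low dimension (do Carmo--Peng, Fischer--Colbrie--Schoen) and in general follows from ``finite total curvature $\Rightarrow$ finite index, and a stable such hypersurface is flat''; for half-bubbles one reflects $\Sigma$ across the bounding hyperplane to obtain a complete, non-flat minimal hypersurface $\widehat\Sigma\subset\R^{n+1}$ of finite total curvature (the reflection being smooth precisely because $\Sigma$ meets the hyperplane orthogonally), which is therefore unstable, and from a destabilizing $\phi$ one extracts a $\tau$-invariant destabilizing function — using $Q_{\widehat\Sigma}(\phi+\phi\circ\tau)+Q_{\widehat\Sigma}(\phi-\phi\circ\tau)=4\,Q_{\widehat\Sigma}(\phi)$, and for the odd part $\psi$ the identities $Q_{\widehat\Sigma}(|\psi|)=Q_{\widehat\Sigma}(\psi)$ with $|\psi|$ even — which descends to a free boundary destabilizing variation of $\Sigma$ (the boundary term of the second variation vanishing since the hyperplane is totally geodesic). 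Now for each $y$ and $i\leq J_y$ fix a smooth compactly supported destabilizing section $\phi_i$ on $\Sigma_i$, supported where $\widetilde M^i_k\to\Sigma_i$ with multiplicity one, with $\|\phi_i\|_{L^2(\Sigma_i)}=1$ and $Q_{\Sigma_i}(\phi_i)=-c_i<0$, and transplant it through the graphical convergence to a section $\phi_{i,k}$ of the normal bundle of $M_k$ supported in a ball $B_{Cr^i_k}(p^i_k)$. A scaling computation gives, as $k\to\infty$, $\|\phi_{i,k}\|^2_{L^2(M_k)}=(r^i_k)^n(1+o(1))$ and $Q_{M_k}(\phi_{i,k})=(r^i_k)^{n-2}(-c_i+o(1))$ — the ambient Ricci and boundary corrections being of order $(r^i_k)^n$ and $(r^i_k)^{n-1}$, hence negligible — so the Rayleigh quotient $Q_{M_k}(\phi_{i,k})/\|\phi_{i,k}\|^2_{L^2(M_k)}\to-\infty$. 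By the separation properties the supports of the $\phi_{i,k}$ (over all $y\in\mathcal{Y}$ and $i\leq J_y$) are pairwise disjoint for $k$ large, so the variational characterization of the eigenvalues of the Jacobi operator gives $\lambda_{\sum_y J_y}(M_k)\leq\max_{y,i}Q_{M_k}(\phi_{i,k})/\|\phi_{i,k}\|^2_{L^2(M_k)}\to-\infty$, in particular $\lambda_{\sum_y J_y}(M_k)<-\mu$ for $k$ large; since $M_k\in\mathfrak{M}_p(\Lambda,\mu)$ forces $\lambda_p(M_k)\geq-\mu$, we conclude $\sum_{y\in\mathcal{Y}}J_y\leq p-1$.

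Finally, when $n=2$ the sharper conclusions come from three-dimensional structure theory: Osserman's theorem gives $\mathcal{A}(\Sigma)\in 8\pi\N$ for bubbles and hence $\mathcal{A}(\Sigma)\in 4\pi\N$ for half-bubbles (via $\widehat\Sigma$), every end of a finite-total-curvature minimal surface in $\R^3$ is embedded and asymptotic to a plane (or a half-plane), and Gauss--Bonnet relates $\mathcal{A}(\Sigma)$ to the Euler characteristic and the number of ends. These facts exclude the nested configuration of two bubbles with a common centre but vastly different scales (which would place a non-flat bubble inside a single multiplicity-one graphical sheet of another, incompatible with planar asymptotics), giving the strengthened spatial separation $(*)$; they also force each blow-up limit $\widetilde M^i_k$ to have a single connected non-flat component, with locally smooth multiplicity-one convergence. \textbf{Main obstacle.} Relative to the closed case of \cite{BS17}, the genuinely new difficulties are all at the boundary: running the point-selection and the neck estimates uniformly up to $\partial N$, correctly separating the two blow-up regimes according to the behaviour of $\mathrm{dist}_g(p^i_k,\partial N)/r^i_k$, and — most delicately — proving the instability of non-trivial half-bubbles (the reflection argument above), which is what makes the eigenvalue count close. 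The technically heaviest ingredient, in both the interior and the boundary settings, is the no-curvature-loss estimate in the necks.
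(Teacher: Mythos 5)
Your overall architecture (bubble-tree induction at each $y\in\mathcal{Y}$, an eigenvalue count to bound the number of scales by $p-1$, neck analysis for the second bullet, and the half-space theorem for the $n=2$ refinements) matches the paper's, but your point-selection is genuinely different: you normalize by the second fundamental form, $r^i_k=|A_{M_k}(p^i_k)|^{-1}$, so that non-triviality of the blow-up limit is immediate from $|A|(0)=1$, and you then obtain the bound $\sum_y J_y\leq p-1$ by proving every non-trivial (half-)bubble is unstable and transplanting disjointly supported destabilizers back to $M_k$. The paper instead selects scales by \emph{stability radii} ($r^i_k$ is the smallest radius at which some ball, minus the previously selected regions, is unstable, cf.\ Theorem \ref{thm:blowup} and the classes $\mathfrak{C}^\ell_k$), so that instability of each new region is built into the selection and the count follows from $\lambda_1(M_k\cap(B_{2r^\ell_k}(p^\ell_k)\setminus U^{\ell-1}_k))\to-\infty$; non-flatness of the limit is then deduced from the Bernstein-type classification of stable (half-)bubbles (Corollary \ref{cor:stable} and Remark \ref{rmk:higher-stable}). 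Both routes ultimately rest on the same two pillars (classification of stable (half-)bubbles and the hypothesis $\lambda_p\geq-\mu$), and your reflection/even-odd argument for the instability of half-bubbles is exactly the content of Section \ref{sec:prelim}. The stability-radius selection also makes your second bullet cleaner: any incomparable non-trivial limit would be unstable and hence would have been captured, whereas you must route this through the neck estimates.

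There is, however, one genuine gap: your justification that the blow-up limits have \emph{finite total curvature} is circular. You write $\int_{\widetilde M^1_k\cap B_R}|A|^n\leq\int_{M_k}|A_{M_k}|^n\leq C$ and let $R\to\infty$, but no uniform bound on $\int_{M_k}|A_{M_k}|^n$ over $\mathfrak{M}_p(\Lambda,\mu)$ is available a priori -- that bound is precisely Corollary \ref{cor:totcurvbound}, which is deduced \emph{from} the quantization identity you are in the course of proving (and the same is true in \cite{BS17} for the closed case; \cite{ACS17} provides no such estimate). Without finite total curvature the limits do not qualify as bubbles or half-bubbles in the sense of the theorem, and the structure of ends, Osserman's theorem, and the Gauss--Bonnet bookkeeping you use for $n=2$ all become unavailable. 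The correct route, which is the one the paper takes in Lemma \ref{lemma:local}(2), is: the limit has Euclidean volume growth (monotonicity plus the area bound $\Lambda$) and finite index (at most $p-1$, since otherwise transplanting $p$ disjoint destabilizers back to $M_k$ would violate $\lambda_p(M_k)\geq-\mu$), and finite index together with Euclidean volume growth implies finite total curvature by Fischer-Colbrie \cite{FC85} for $n=2$ and Tysk \cite{T89} for $n\geq 3$, extended to half-bubbles via the reflection and the even/odd index identities (Proposition \ref{pro:equiv}). You already have all of these ingredients on the table -- the transplantation argument and the reflection machinery -- so the gap is repairable, but as written the finiteness claim assumes the conclusion.
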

     		Notice that condition $(*)$ has, in fact, a transparent geometric interpretation: it ensures that one can \textsl{separate} the bubble regions, so that in a certain sense there is no interaction between different regions of high curvature. More precise corollaries of this fact will be discussed later.
     		
     		\begin{rmk}\label{rem:lima}
     			Thanks to recent work \cite{Lim17} by V. Lima, extending to the free boundary setting the results by Eijiri-Micallef \cite{EM08} and Cheng-Tysk \cite{CT94}, we know that \emph{a}) a uniform bound both on the area and on the topology of a sequence of orientable free boundary minimal surfaces implies a uniform bound on the Morse index, and \emph{b}) when $n\geq 3$ a uniform bound on the area and on the total curvature implies a uniform bound on the Morse index. It follows that both our main theorems can be rephrased with those assumptions, instead. Moreover, we note that when $n=2$ the theorem by V. Lima can be regarded as a partial converse to the results in \cite{ACS18b}, where the Morse index is proven to be bounded from below by an affine function of the genus and the number of boundary components of the surface in question, under suitable curvature conditions on the ambient manifold.
     		\end{rmk}

     	   For the rest of this introduction, let us focus on the case of ambient dimension three. In that case, one can rely on the Gauss-Bonnet theorem, and on the varifold convergence of the boundaries (see Proposition \ref{prop:boundaryconv} and Corollary \ref{cor:geodcurv} for precise statements) to rewrite the quantization identity in the form 
     	\[
     	\chi(M_k) = m\chi(M) + \frac{1}{2\pi} \sum_{j=1}^J \int_{\Sigma_j} K_{\Sigma_j} \id \h^2,
     	\]
     		which holds true, with the setup as in Theorem \ref{thm:quant}, for all $k$ sufficiently large. Actually, the second summand on the right-hand side can also be expressed only in terms of topological data (see Subsection \ref{subs:geomhalf} for a detailed discussion), so that we can derive the formula we will employ in all of our applications:
     		
     		\begin{cor}\label{cor:chi}
     			In the setting of Theorem \ref{thm:quant}, specified to $n=2$, we have for all $k$ sufficiently large
     			\begin{equation*}\label{eq:topfundsymINTRO}
     			\chi(M_k)= m\chi(M)+ \sum_{j=1}^J (\chi(\Sigma_j)-b_j),
     			\end{equation*}
     			where $\chi(\Sigma_j)$ denotes the Euler characteristic of $\Sigma_j$ and $b_j$ denotes the number of its ends.
     		\end{cor}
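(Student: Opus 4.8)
The plan is to deduce the corollary from the Gauss--Bonnet form of the quantization identity already recorded just above its statement, namely that
\[
\chi(M_k) \;=\; m\,\chi(M) \;+\; \frac{1}{2\pi}\sum_{j=1}^{J} \int_{\Sigma_j} K_{\Sigma_j}\id\h^2
\]
holds for all $k$ sufficiently large --- this being the combination of Gauss--Bonnet applied to $M_k$ and to $M$, of the quantization identity of Theorem~\ref{thm:quant} together with the Gauss equation in dimension two, and of the varifold convergence of the boundary curves $\partial M_k\to m\,\partial M$ encoded in Proposition~\ref{prop:boundaryconv} and Corollary~\ref{cor:geodcurv}. Granting this, and recalling that for $n=2$ every blow-up limit $\Sigma_j$ is connected and non-trivial (Theorem~\ref{thm:bubbles}), the corollary reduces to the intrinsic identity
\[
\frac{1}{2\pi}\int_{\Sigma_j} K_{\Sigma_j}\id\h^2 \;=\; \chi(\Sigma_j) - b_j \qquad (j=1,\dots,J),
\]
which we prove separately for bubbles and for half-bubbles; summing over $j$ and substituting then gives the claim.

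\emph{The case of bubbles.} If $\Sigma_j$ is a bubble --- a complete, connected, properly embedded minimal surface in $\R^3$ of finite total curvature --- then by the theorem of Chern and Osserman it is conformally a closed Riemann surface with $b_j$ punctures, one for each end, and properness forces every end to be embedded, hence asymptotic to a plane or to a half-catenoid. In this embedded setting the Jorge--Meeks formula reads precisely $\int_{\Sigma_j} K_{\Sigma_j}\id\h^2 = 2\pi\bigl(\chi(\Sigma_j) - b_j\bigr)$, which is the desired identity. (The normalisation is checked on the plane, where both sides vanish, and on the catenoid, where both equal $-4\pi$.)

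\emph{The case of half-bubbles.} If $\Sigma_j$ is a half-bubble, contained in a closed half-space $\Pi$ with $\partial\Sigma_j\subset P:=\partial\Pi$ and meeting $P$ orthogonally, we reduce to the previous case by Schwarz reflection. Writing $\rho$ for the Euclidean reflection across $P$ and setting $\widehat{\Sigma}_j:=\Sigma_j\cup\rho(\Sigma_j)$, the orthogonality makes $\widehat{\Sigma}_j$ a minimal surface, smooth across $\partial\Sigma_j$; it is connected (as $\Sigma_j$ is, and $\Sigma_j\cap\rho(\Sigma_j)=\partial\Sigma_j\neq\emptyset$), properly embedded in $\R^3$, and of finite total curvature $2\mathcal{A}(\Sigma_j)$, hence a bubble, so the bubble identity applies to it: $\tfrac{1}{2\pi}\int_{\widehat{\Sigma}_j}K_{\widehat{\Sigma}_j}\id\h^2=\chi(\widehat{\Sigma}_j)-\widehat{b}_j$. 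It then remains to re-express the right-hand side in terms of $\Sigma_j$. Here we use the structure theory of half-bubbles developed in Subsection~\ref{subs:geomhalf} (the free boundary analogue of Chern--Osserman): $\Sigma_j$ is conformally a compact surface with boundary minus $p$ interior punctures and $q$ free-boundary punctures, with standard asymptotics at each, so $b_j=p+q$. Under doubling, each interior puncture of $\Sigma_j$ produces two punctures of $\widehat{\Sigma}_j$ and each free-boundary puncture produces a single interior puncture of $\widehat{\Sigma}_j$, whence $\widehat{b}_j=2p+q$; by the reflection symmetry $\int_{\widehat{\Sigma}_j}K_{\widehat{\Sigma}_j}\id\h^2=2\int_{\Sigma_j}K_{\Sigma_j}\id\h^2$; and inclusion--exclusion for $\chi$ along $\widehat{\Sigma}_j=\Sigma_j\cup\rho(\Sigma_j)$, with $\Sigma_j\cap\rho(\Sigma_j)=\partial\Sigma_j$ (the boundary circles of the compactification with the $q$ free-boundary punctures removed, of Euler characteristic $q$), gives $\chi(\widehat{\Sigma}_j)=2\chi(\Sigma_j)-q$. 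Combining,
\[
\frac{1}{\pi}\int_{\Sigma_j}K_{\Sigma_j}\id\h^2 \;=\; \bigl(2\chi(\Sigma_j)-q\bigr)-\bigl(2p+q\bigr) \;=\; 2\bigl(\chi(\Sigma_j)-(p+q)\bigr) \;=\; 2\bigl(\chi(\Sigma_j)-b_j\bigr),
\]
so $\tfrac{1}{2\pi}\int_{\Sigma_j}K_{\Sigma_j}\id\h^2=\chi(\Sigma_j)-b_j$ here as well.

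I expect the only genuinely non-routine ingredient to be the half-bubble case, and within it the structure theorem for complete minimal surfaces with non-empty free boundary and finite total curvature inside a half-space: one needs that such a surface has finite topology and standard asymptotic models at both its interior and its free-boundary ends, so that the notions of ``number of ends'' and of the punctured boundary circles, and hence the doubling bookkeeping above, are legitimate. That analysis is precisely the content of Subsection~\ref{subs:geomhalf}; once it is in hand, the reflection argument and the elementary Euler-characteristic count complete the proof, the bubble case being classical.
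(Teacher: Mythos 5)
Your proposal is correct and follows essentially the same route as the paper: you combine the Gauss--Bonnet form of the quantization identity (Theorem~\ref{thm:quant} together with the Gauss equation, Proposition~\ref{prop:boundaryconv} and Corollary~\ref{cor:geodcurv}) with the per-bubble identity $\tfrac{1}{2\pi}\int_{\Sigma_j}K_{\Sigma_j}\,d\h^2=\chi(\Sigma_j)-b_j$, which for full bubbles is the Jorge--Meeks formula and for half-bubbles is exactly the content of Lemma~\ref{lem:euler}. Your doubling bookkeeping with $p$ interior and $q$ free-boundary punctures is just a uniform way of writing the two cases of Lemma~\ref{lem:euler}; since the ends of the embedded double are all parallel, only the pure cases $q=0$ or $p=0$ can actually occur, but your formula covers both and gives the same conclusion.
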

     		
     		This is the starting point for our primary geometric applications. We present three instances, which are meant to illustrate the method, and leave other possible extensions in the form of remarks.\\

     		Here is the first application we wish to discuss: since we can fully classify bubbles and half-bubbles of Morse index less than two (see Corollary \ref{cor:stable} and Corollary \ref{cor:indexone}) we can then get novel, unconditional, geometric convergence results for sequences of free boundary minimal surfaces of low index. Furthermore, we can specialize the general blow-up analysis presented in Theorem \ref{thm:bubbles} to give an accurate description of the possible degenerations that may occur when the convergence is not smooth everywhere.

     			\begin{thm}\label{thm:conv1}
     				Let $(N^3,g)$ be a compact Riemannian manifold, with non-empty boundary $\partial N$. Assume that: 
     				\begin{itemize} 
     					\item[\emph{a)}]{\underline{\textsl{either}} the scalar curvature of $(N,g)$ is positive and $\partial N$ is mean convex with no minimal component;}
     					\item[\emph{b)}]{\underline{\textsl{or}} the scalar curvature of $(N,g)$ is non-negative and $\partial N$ is strictly mean convex.}
     				\end{itemize}
     				Then, for any $\Lambda>0$ the following assertions hold:
     				\begin{enumerate}
     				\item{The class $\mathfrak{M}(\Lambda,0)$ is sequentially compact in the sense of smooth multiplicity one convergence. Similarly,  any subclass of $\mathfrak{M}(\Lambda, 1)$ of fixed topological type is sequentially compact, in the sense of smooth multiplicity one convergence, for all given topological types except those of the disk and of the annulus. In particular, we obtain unconditional sequential compactness for any class of non-orientable surfaces of given topological type.}
     				\item{Let $\left\{M_k\right\}$ be a sequence of disks (respectively: annuli) in $\mathfrak{M}(\Lambda, 1)$. Then: \textsl{either} a subsequence converges smoothly, with multiplicity one, to an embedded minimal disk (respectively: annulus) of index at most one \textsl{or} there exists a subsequence converging smoothly, with multiplicity two and exactly one vertically cut catenoidal half-bubble as per Definition \ref{def:horcurvercut} (respectively:  exactly one catenoidal bubble), to a properly embedded, free boundary stable minimal disk. As a result, if $N$ contains no stable, embedded, minimal disks then strong compactness holds.}
     				\end{enumerate}
     				All conclusions still hold true without assuming any a priori upper area bound if $N$ is simply connected and, in case b), if moreover there is no \textsl{closed} minimal surface in $N$. 
     			\end{thm}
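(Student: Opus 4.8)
The plan is to feed Theorem~\ref{thm:quant}, Corollary~\ref{cor:chi}, and the classification of low-index blow-up models (Corollaries~\ref{cor:stable} and~\ref{cor:indexone}) into the curvature hypotheses, closing every case by an Euler characteristic count. I first collect the inputs. Each of a) and b) implies assumption $(\textbf{P})$ of Remark~\ref{rem:properness} --- for a) this is condition $(\textbf{C})$ of \cite{ACS17}, and strict mean convexity also forces properness --- so every limit from Theorem~\ref{thm:quant} is properly embedded, hence lies in the relevant $\mathfrak{M}_p(\Lambda,0)$, and for $k$ large the $M_k$ are mutually diffeomorphic. Under a) or b), the free boundary Schoen--Yau / Fischer-Colbrie--Schoen argument --- test stability with $\phi\equiv1$, insert the Gauss equation and Gauss--Bonnet, bound the boundary term by mean convexity, and discard the equality case via the no-minimal-component clause, respectively strictness --- shows that every two-sided free boundary stable minimal surface in $N$ is a disk, and that $N$ contains no stable closed minimal surface and no stable one-sided free boundary minimal surface. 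By Corollaries~\ref{cor:stable} and~\ref{cor:indexone}, for $n=2$ every stable (half-)bubble is trivial, and the only non-trivial (half-)bubbles of index $\le1$ that can arise as blow-up limits are the catenoid (at an interior point) and the vertically cut catenoidal half-bubble of Definition~\ref{def:horcurvercut} (at a boundary point); each of these has at least two ends, asymptotic to parallel (half-)hyperplanes. Finally, I record the elementary dichotomy used throughout: if $\mathcal{Y}=\emptyset$ then $m=1$, since a two-sided limit would let one order the $m$ graphical sheets globally, forcing $M_k$ disconnected when $m\ge2$, while a one-sided limit would produce, from the renormalised separation of two adjacent sheets, a nowhere-vanishing section of the \emph{non-trivial} normal bundle $NM$; whereas when a (half-)bubble occurs it joins two parallel sheets both collapsing onto the smooth surface $M$ near the blow-up point, so there $m\ge2$, and since $J\le p-1$ and $M_k$ is connected a single (half-)neck joins only two sheets, whence $m=2$.

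For assertion (1), apply Theorem~\ref{thm:quant} to $\mathfrak{M}(\Lambda,0)$ with $p=1$: then $\mathcal{Y}=\emptyset$ and $J=0$, so $m=1$ and the limit is a proper stable surface, i.e.\ a disk. Next fix a topological type $T$ and let $M_k\in\mathfrak{M}(\Lambda,1)$ be of type $T$, so $p=2$ and $J\le1$. If a non-trivial (half-)bubble $\Sigma_1$ occurs, it has index exactly one (non-flat, and $\le1$ coming from $M_k$), so by the index lower semicontinuity inherent in the blow-up analysis of Theorem~\ref{thm:bubbles} one gets $\mathrm{index}(M)=0$; then $M$ is a stable disk, and by the last point above $M_k$ is two disks joined by a (half-)neck, hence an annulus or a disk. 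Therefore, when $T$ is neither a disk nor an annulus no bubble can form, so $\mathcal{Y}=\emptyset$, $m=1$, and $M_k\to M$ smoothly with multiplicity one, $M$ being of type $T$ and of index $\le1$. For non-orientable $T$ the same dichotomy applies verbatim, since a disk and an annulus are orientable (so a bubble is again impossible) and $m=1$ (otherwise $M_k$ would be the orientation double cover of a one-sided surface, hence orientable).

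For assertion (2), let $M_k$ be disks, respectively annuli, in $\mathfrak{M}(\Lambda,1)$. If $\mathcal{Y}=\emptyset$ then $m=1$ and the limit is a disk, respectively an annulus, of index $\le1$: the first alternative. Otherwise a single non-trivial $\Sigma_1$ of index one forms, $M$ is a stable disk, $m=2$, and Corollary~\ref{cor:chi} reads $\chi(M_k)=2+(\chi(\Sigma_1)-b_1)$, where $\chi(\Sigma_1)-b_1$ equals $-2$ for the catenoid and $-1$ for the vertically cut catenoidal half-bubble. So $\chi(M_k)=1$ forces $\Sigma_1$ to be the vertically cut catenoidal half-bubble, while $\chi(M_k)=0$ forces $\Sigma_1$ to be a catenoid; the local picture at the unique blow-up point is then the one in Theorem~\ref{thm:bubbles}. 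Since the second alternative manufactures a properly embedded free boundary stable minimal disk in $N$, it is vacuous if $N$ has none, which gives the last clause.

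It remains to drop the area bound when $N$ is simply connected (and, in case b), has no closed minimal surface). The plan is to prove an a priori area bound for free boundary minimal surfaces of index $\le1$, after which all of the above applies. Step one: by the linear lower bound for the Morse index in terms of the genus and the number of boundary components (\cite{ACS18b}, available under a) and b)), index $\le1$ forces controlled topology. Step two: in a simply connected $N$ such a surface is null-homologous rel boundary, hence separates $N$; bounding the number of its sheets over a fixed finite cover of $N$ in terms of its topology --- via the monotonicity formula and the mean convexity of $\partial N$, the no-closed-minimal-surface hypothesis of case b) preventing accumulation onto a closed minimal surface --- and integrating yields a uniform area bound. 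I expect step two to be the main obstacle: on a general compact $3$-manifold controlled topology does \emph{not} bound area, so the argument must genuinely exploit simple connectedness together with the curvature conditions, and extra care is needed because $M$ is only stationary of low index, not area-minimising. With the resulting uniform $\Lambda$, assertions (1) and (2) follow exactly as above.
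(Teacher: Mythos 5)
Your treatment of assertions (1) and (2) follows the paper's strategy in substance (Theorem~\ref{thm:quant} plus Corollary~\ref{cor:chi} plus the classification of low-index (half-)bubbles), with two minor gaps and one route taken differently. The minor gaps: you assert that the index-one half-bubble must be a \emph{vertically} cut half-catenoid, but Corollary~\ref{cor:indexone} only gives ``half-catenoid''; ruling out the horizontally cut one (whose free boundary is compact) requires invoking assumption $(\textbf{P})$, as in Case~C of the neck analysis. Likewise, ``two disks joined by a (half-)neck, hence an annulus or a disk'' silently excludes the M\"obius band; the paper closes this by noting that for $k$ large $M_k$ lies in a tubular neighbourhood of the two-sided limit disk, which is diffeomorphic to $D^2\times I$ and hence orientable. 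The route taken differently: to get $m=1$ when $\mathcal{Y}=\emptyset$ you order the sheets (two-sided case) or exhibit a nowhere-vanishing section of $NM$ (one-sided case); the paper instead combines Lemma~\ref{lem:stable-type} with the equality $\chi(M_k)=m\chi(M)$ from Corollary~\ref{cor:chi}. Your version is a slightly more elementary argument and is fine, though you should spell out the one-sided case with care (the fixed ``middle sheet'' for odd $m$ is nowhere zero, else the touching point would force $M_k=M$ by the maximum principle).

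The genuine gap is the last clause, dropping the a priori area bound. Your plan (bound the topology by the index lower bound from \cite{ACS18b}, then bound area by counting sheets over a finite cover, exploiting simple connectedness and the curvature conditions) is not carried out, and as you yourself observe there is no reason controlled topology should control area; as stated, Step~two would fail. The paper's argument is much more direct and does not go through topology at all: in a simply connected $N$ every properly embedded free boundary minimal surface is two-sided and orientable, so Lemma~\ref{lem:area} applies to each $M_k$. That lemma produces a uniform area bound for index~$\le1$ surfaces directly from the curvature hypotheses: under a), the stability (respectively Hersch-trick index-one) inequality of Lemma~\ref{lem:stable} bounds $\h^2(M)$ by a multiple of $1/\varrho$; under b) it instead bounds $\h^1(\partial M)$ by a multiple of $1/\sigma$, and White's isoperimetric inequality \cite{Whi09} then converts the boundary-length bound into an area bound (this is exactly where the ``no closed minimal surface'' hypothesis of case b) is used). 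With the resulting $\Lambda$ in hand one runs the bounded-area argument as before. You should replace your Step~two with this.
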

     			
     			Similar results can be obtained for sequences whose Morse index is bounded from above by any given integer $k\in\mathbb{N}$, and whose area is also uniformly bounded. On the other hand, we can also prove strong compactness theorems for sequences of free boundary minimal surfaces that satisfy certain quantitative lower bounds on either their area or on the length of their boundaries. Roughly speaking, this relies on the fact that for 3-manifolds of positive scalar curvature and mean convex boundary we have area estimates for the possible stable limits, as per Lemma \ref{lem:area}, and an effective multiplicity estimate in terms of the number of ends of the bubbles and half-bubbles that occur as blow-up models (cf. Proposition 13 in \cite{ABCS18}).

     				\begin{thm}\label{thm:conv2}
     					Let $(N^3,g)$ be a compact Riemannian manifold, with non-empty boundary $\partial N$, and let $\left\{M_k\right\}$ be a sequence in $\mathfrak{M}(\Lambda,1)$ for some $\Lambda>0$.
     					\begin{enumerate}
     						\item{Assume that the scalar curvature of $(N,g)$ is bounded from below by $\varrho>0$ and $\partial N$ is mean convex with no minimal component. If
     							\[
     							\limsup_{k\to\infty}\h^2(M_k)>\frac{8\pi}{\varrho}
     							\]
     							then, up to extracting a subsequence, $\left\{M_k\right\}$ converges smoothly to some element of $\mathfrak{M}(\Lambda,1)$ with multiplicity one.}	
     						\item{Assume that the scalar curvature of $(N,g)$ is non-negative and the mean curvature of $\partial N$ is bounded from below by $\sigma>0$. If 
     							\[
     							\limsup_{k\to\infty}\h^1(\partial M_k)>\frac{4\pi}{\sigma}
     							\]
     							then, up to extracting a subsequence, $\left\{M_k\right\}$ converges smoothly to some element of $\mathfrak{M}(\Lambda,1)$ with multiplicity one.}	
     					\end{enumerate}	
     					 All conclusions still hold true without assuming any a priori upper area bound if $N$ is simply connected and, in case (2), if moreover there is no \textsl{closed} minimal surface in $N$. 
     				\end{thm}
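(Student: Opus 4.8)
The plan is to deduce Theorem~\ref{thm:conv2} from Theorem~\ref{thm:conv1}, the area-type estimate of Lemma~\ref{lem:area} for stable limits, and the boundary varifold convergence of Proposition~\ref{prop:boundaryconv}. First I would observe that in case (1) the triple $(N,g,\partial N)$ satisfies hypothesis (a) of Theorem~\ref{thm:conv1}, and in case (2) it satisfies hypothesis (b), so Theorem~\ref{thm:conv1} applies throughout. Given $\{M_k\}\subset\mathfrak{M}(\Lambda,1)$, Corollary~\ref{cor:finite} lets me pass to a subsequence along which all $M_k$ are diffeomorphic to a fixed surface $S$. If $S$ is neither a disk nor an annulus --- in particular if $S$ is non-orientable --- then Theorem~\ref{thm:conv1}(1) already produces a further subsequence converging smoothly with multiplicity one to an element of $\mathfrak{M}(\Lambda,1)$, and we are done. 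So I may assume $S$ is a disk (resp. an annulus) and invoke Theorem~\ref{thm:conv1}(2): either a subsequence converges smoothly with multiplicity one to an embedded minimal disk (resp. annulus) of index at most one, in which case the conclusion holds; or a subsequence converges smoothly, with multiplicity two and exactly one vertically cut catenoidal half-bubble in the sense of Definition~\ref{def:horcurvercut} (resp. exactly one catenoidal bubble), to a properly embedded \emph{free boundary stable} minimal disk $M$. The whole content of the theorem is then that this second alternative is incompatible with the respective hypothesis on area or on boundary length.

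To rule it out, suppose the degenerate alternative occurs, so $M_k\to M$ smoothly with multiplicity two away from a finite set $\mathcal{Y}$ with $|\mathcal{Y}|\le 1$, at whose points the (half-)bubble forms. By the interior monotonicity formula, and its free boundary analogue at boundary points of $\mathcal{Y}$, one has $\h^2(M_k\cap B_r(y))=O(r^2)$ and $\h^1(\partial M_k\cap B_r(y))=O(r)$ uniformly in $k$ for $y\in\mathcal{Y}$; combining this with the smooth multiplicity-two convergence on $M\setminus\mathcal{Y}$ and the varifold convergence of the boundaries from Proposition~\ref{prop:boundaryconv} (together with Corollary~\ref{cor:geodcurv}), letting first $k\to\infty$ and then $r\to 0$ gives $\h^2(M_k)\to 2\,\h^2(M)$ and $\h^1(\partial M_k)\to 2\,\h^1(\partial M)$. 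Next, applying Lemma~\ref{lem:area} to the free boundary stable minimal disk $M$ --- that is, testing the second variation with the constant function, inserting the Gauss equation and integrating by Gauss--Bonnet --- yields
\begin{equation*}
\tfrac12\int_M R_N \, \id \h^2 + \int_{\partial M} H_{\partial N}\, \id \h^1 \;\le\; 2\pi\chi(M) - \tfrac12\int_M |A|^2 \, \id \h^2 \;\le\; 2\pi,
\end{equation*}
since $\chi(M)=1$. In case (1), where $R_N\ge\varrho>0$ and $H_{\partial N}\ge0$, this forces $\h^2(M)\le 4\pi/\varrho$, hence $\h^2(M_k)\to 2\h^2(M)\le 8\pi/\varrho$, contradicting $\limsup_k\h^2(M_k)>8\pi/\varrho$. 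In case (2), where $R_N\ge0$ and $H_{\partial N}\ge\sigma>0$, it forces $\h^1(\partial M)\le 2\pi/\sigma$, hence $\h^1(\partial M_k)\to 2\h^1(\partial M)\le 4\pi/\sigma$, contradicting $\limsup_k\h^1(\partial M_k)>4\pi/\sigma$. Therefore the degenerate alternative is excluded and the subsequence converges smoothly with multiplicity one; that the limit lies in $\mathfrak{M}(\Lambda,1)$ is part of Theorem~\ref{thm:conv1} (it also follows from the last clause of Theorem~\ref{thm:quant}, assumption $(\textbf{P})$ being automatic here, together with lower semicontinuity of the index).

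For the final clause, when $N$ is simply connected (and, in case (2), contains no closed minimal surface) the last clause of Theorem~\ref{thm:conv1} says that all of its conclusions remain valid without an a priori area bound, so the argument above applies verbatim to the whole class of index-one free boundary minimal surfaces. As for the main obstacle: the present deduction is essentially bookkeeping once Theorem~\ref{thm:conv1} is granted, so the genuine difficulty is entirely upstream --- namely in the proof of the sharp estimate of Lemma~\ref{lem:area}, where the boundary term $\int_{\partial M}H_{\partial N}$ arising from the free boundary condition in the second variation must be retained and shown to have the correct sign, and in the area and boundary-length quantization in the bubbling regime, which relies on the separation property $(*)$ of Theorem~\ref{thm:bubbles} and on the effective multiplicity bound in terms of the number of ends of the (half-)bubbles (cf. Proposition~13 in \cite{ABCS18}) to guarantee that the limiting multiplicity is exactly two and that the neck regions carry no area in the limit.
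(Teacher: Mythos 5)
Your overall strategy is sound and is essentially the paper's: the paper does not route through Theorem \ref{thm:conv1} as a black box but instead re-runs the same case analysis (smooth convergence with multiplicity $m\geq 2$ versus bubbling), and in both cases invokes exactly the ingredients you list --- the stable two-sided disk limit with $\frac{\varrho}{2}\h^2(M)+\sigma\h^1(\partial M)\leq 2\pi$ (Lemma \ref{lem:stable-type}), the multiplicity bound $m=2$ from Proposition 13 of \cite{ABCS18}, and varifold convergence of areas and of boundary lengths (the latter via Proposition \ref{prop:boundaryconv}) to conclude $\h^2(M_k)\to 2\h^2(M)$ and $\h^1(\partial M_k)\to 2\h^1(\partial M)$.

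There is, however, one genuine logical gap in your bookkeeping of subsequences. You extract a subsequence of fixed topological type first, then apply the dichotomy of Theorem \ref{thm:conv1}(2), and in the degenerate alternative you conclude that $\h^2(M_k)\to 2\h^2(M)\leq 8\pi/\varrho$ along that (sub-)subsequence ``contradicts $\limsup_k\h^2(M_k)>8\pi/\varrho$''. It does not: the limsup is taken over the original sequence, and the degenerate subsequence need not realize it. For instance, the original sequence could interlace a family of large-area surfaces (the one that actually satisfies the conclusion) with a family of small-area disks degenerating with multiplicity two onto a stable disk; your argument, applied to the second family, reaches no contradiction and therefore fails to locate the good subsequence. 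The fix is precisely the paper's opening move: before any other extraction, pass to a subsequence along which $\h^2(M_k)$ (resp. $\h^1(\partial M_k)$) converges to a limit exceeding $8\pi/\varrho$ (resp. $4\pi/\sigma$); every further subsequence inherits this, and your final inequality then yields a genuine contradiction. With that one-line repair, the rest of your argument --- including the reduction of the unbounded-area case to the bounded one via the a posteriori area bound of Lemma \ref{lem:area} --- goes through.
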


     		When considering sequences of free boundary minimal surfaces of bounded index and area, one may witness some `loss of topology' in the limit: Theorem \ref{thm:quant}, and in particular Corollary \ref{cor:chi} can further be employed to fully understand and quantify this loss. Let us focus, for simplicity, on the case where the ambient is orientable and all free boundary minimal surfaces are orientable as well (which occurs, for instance, in simply connected Euclidean domains).
     Given a (half-)bubble $\Sigma$ we let the constant $\delta(\Sigma)$ be defined by the equation
      \[
      \chi(\Sigma)=2-2 \delta(\Sigma)-b(\Sigma)
      \]
      where $b(\Sigma)$ denotes the number of ends of $\Sigma$. Notice that when $\Sigma$ is a full bubble then $\delta(\Sigma)$ equals the genus of $\Sigma$ (cf. equation \eqref{eq:GBcomplete}), but this is patently not the case for half-bubbles: for instance if $\Sigma$ is a vertically cut half-catenoid (see Definition \ref{def:horcurvercut}) then $\chi(\Sigma)=1, b(\Sigma)=2$ and $\delta(\Sigma)=-1/2$ (which also shows that $\delta(\cdot)$ is not integer-valued, and may be negative). 
      
      \begin{thm}\label{thm:lsc}
      	Let $(N^{3},g)$ be a compact, orientable Riemannian manifold with non-empty boundary $\partial N$.
      	Consider a sequence of orientable, embedded, free boundary minimal surfaces $\{M_k\}\subset \mathfrak{M}_p(\Lambda,\mu)$ for some fixed constants $\Lambda\in \R$, $\mu\in \R$ and $p\in\N_{\geq 1}$ independent of $k$, and assume it has an orientable limit
      	$M\in \mathfrak{M}_p(\Lambda,\mu)$, in the sense of smooth graphical convergence with multiplicity $m\geq 1$ away from a finite set $\mathcal{Y}$ of points. 
      	Then for all sufficiently large $k\in\mathbb{N}$ one has
      	\begin{equation*}\label{eq:lsc}
      	2m\cdot  genus(M)+m\cdot boundaries(M)
      	+2\sum_{j=1}^J\delta(\Sigma_j)
      	\leq 2 \cdot genus(M_k)+boundaries(M_k).
      	\end{equation*}
      	The inequality above is strict unless
      	\[
      	m=1+\sum_{j=1}^J(b_j-1).
      	\]
      	Under assumption (\textbf{P}) the number of non-trivial bubbles plus half-bubbles is at most $m-1$, and if it equals $m-1$ then each bubble is a catenoid and each half-bubble is a vertically cut half-catenoid.
      \end{thm}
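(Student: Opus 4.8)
The plan is to feed the Gauss--Bonnet normal form of the Euler characteristic into the topological quantization identity of Corollary \ref{cor:chi}, and then to read off the inequality from an effective multiplicity estimate.

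First I would record that, since $N$, $M_k$ and $M$ are orientable, Gauss--Bonnet for compact surfaces with boundary gives $\chi(M_k)=2-2\cdot genus(M_k)-boundaries(M_k)$ and the analogous identity for $M$. Next, every bubble is properly embedded in $\R^3$ and every half-bubble is properly embedded in a half-space of $\R^3$; since these ambient spaces have trivial first $\Z_2$-cohomology the hypersurfaces $\Sigma_j$ are two-sided, hence orientable, so the constant $\delta(\Sigma_j)$ is well defined through $\chi(\Sigma_j)=2-2\delta(\Sigma_j)-b_j$, with $b_j=b(\Sigma_j)$ the number of ends. Since we are in the setting of Theorem \ref{thm:quant} with $n=2$, Corollary \ref{cor:chi} applies for all large $k$ and reads $\chi(M_k)=m\chi(M)+\sum_{j=1}^J(\chi(\Sigma_j)-b_j)$. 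Substituting the three Euler-characteristic identities together with $\chi(\Sigma_j)-b_j=2-2\delta(\Sigma_j)-2b_j$ and rearranging, I expect to arrive, for all large $k$, at
\begin{equation*}
\Big(2\cdot genus(M_k)+boundaries(M_k)\Big)-\Big(2m\cdot genus(M)+m\cdot boundaries(M)+2\sum_{j=1}^J\delta(\Sigma_j)\Big)=2\Big(1-m+\sum_{j=1}^J(b_j-1)\Big).
\end{equation*}
Thus the stated inequality is equivalent to $m\le 1+\sum_{j=1}^J(b_j-1)$, and it holds with equality precisely when $m=1+\sum_{j=1}^J(b_j-1)$; this is exactly the strictness clause.

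It then remains to justify the multiplicity bound $m\le 1+\sum_{j=1}^J(b_j-1)$. I would obtain this from the refined neck decomposition of the $M_k$ near the points of $\mathcal{Y}$ provided by Theorem \ref{thm:bubbles}, combined with the connectedness of each $M_k$: informally, the $m$ limiting graphical sheets and the $J$ blow-up models organise into a connected (hyper)graph in which $\Sigma_j$, having $b_j$ asymptotically (half-)planar ends, is attached to $b_j$ of the sheets, so that connectedness forces $\sum_j(b_j-1)\ge m-1$. This is precisely the effective multiplicity estimate referred to in the introduction (cf. Proposition 13 in \cite{ABCS18}).

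For the final assertion I would invoke $(\textbf{P})$: then $M$ is proper and the convergence $M_k\to M$ is graphical, so near each point of $\mathcal{Y}$ the $m$ sheets are globally ordered. A sharper analysis of the necks --- using Theorem \ref{thm:bubbles} for $n=2$, the separation property $(*)$, and embeddedness of the $M_k$ --- should show that each non-trivial bubble is attached along a block of consecutive sheets, whence the previous count improves to $J\le m-1$, with equality forcing every bubble to be attached to exactly two sheets, i.e.\ $b_j=2$ for all $j$. Feeding this into the classification of two-ended blow-up models (a non-trivial complete embedded minimal surface in $\R^3$ of finite total curvature with two ends is a catenoid, while a non-trivial two-ended half-bubble is a vertically cut half-catenoid in the sense of Definition \ref{def:horcurvercut}, by the analysis behind Corollaries \ref{cor:stable} and \ref{cor:indexone}) gives the statement. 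I expect the effective multiplicity estimate and its refinement under $(\textbf{P})$ --- that is, the combinatorics of how the bubbles attach to the limiting sheets --- to be the only genuinely non-formal part of the argument; the rest is the elementary Euler-characteristic bookkeeping displayed above.
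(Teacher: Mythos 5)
Your derivation of the main inequality and of the strictness clause is correct and is exactly the paper's argument: Corollary \ref{cor:chi} combined with the Euler-characteristic identities for $M_k$, $M$ and the $\Sigma_j$ reduces the claim to the multiplicity estimate $m\le 1+\sum_{j=1}^J(b_j-1)$ of Proposition 13 in \cite{ABCS18}, with equality in the theorem precisely when equality holds there. The paper leaves the bookkeeping implicit; your displayed identity makes it explicit and is the right computation. Proposition 13 is invoked as a black box in the paper too, so your informal connectivity sketch of it is neither needed nor, as written, a proof.

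For the final clause your route diverges from the paper's and has a gap. The paper does not return to the neck analysis at all: it works within the equality case $m-1=\sum_j(b_j-1)$ and observes that assumption $(\textbf{P})$ excludes half-bubbles with \emph{compact} free boundary (Case C of the neck analysis, cf. Subsection \ref{subs:geomhalf}), so that $b_j=\check{b}_j$ for the double $\check{\Sigma}_j$, and $\check{b}_j\ge 2$ because the plane is the only one-ended complete embedded minimal surface of finite total curvature in $\R^3$ (\cite{Sc83}, or \cite{HM90}). Hence $J\le\sum_j(b_j-1)=m-1$, equality forcing $b_j=2$ for every $j$, at which point Schoen's two-end theorem identifies $\check{\Sigma}_j$ with the catenoid and $\Sigma_j$ with a catenoid or a vertically cut half-catenoid. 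Your proposed ``block of consecutive sheets'' refinement is left entirely as a sketch (``should show''), and, more importantly, it never engages with the one configuration that $(\textbf{P})$ is actually there to exclude: a horizontally cut half-catenoid (Corollary \ref{cor:oneend}) has $b_j=1$ and attaches to a single sheet, which would invalidate both your attachment count and the bound $J\le m-1$ --- this is exactly the content of the remark following the paper's proof. Two smaller points: the classification you need at the end is the topological one (Lemma \ref{lem:euler} together with \cite{Sc83}), not the index-based Corollaries \ref{cor:stable} and \ref{cor:indexone}, since no index bound on the blow-up limits is assumed here; and note that the paper establishes the clause $J\le m-1$ only within the equality case, rather than unconditionally as your sketch attempts.
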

      
      Let us conclude this introduction with a brief description of the structure of the present article. In Section \ref{sec:prelim} we present some preliminary results concerning half-bubbles: after a general discussion relating their Morse indices and topology to those of their doubles, we employ these facts to derive various classification results of independent interest. Section \ref{sec:bubbling} and \ref{sec:neck} are devoted to the bubbling and neck analysis, respectively, and lead to a complete proof of Theorem \ref{thm:quant} and Theorem \ref{thm:bubbles} above. Lastly, the global geometric applications are then collected in Section 5.

     		\textsl{Acknowledgements.}  During the preparation of this article, L. A. was supported by the EPSRC on a Programme Grant entitled `Singularities of Geometric Partial Differential Equations' reference number EP/K00865X/1.
     		This project was completed while A. C. was visiting the Mathematisches Forschungsinstitut Oberwolfach, and he would like to thank the director Gerhard Huisken and the staff members for the warm hospitality and excellent working conditions.

        \section{Preliminary results on half-bubbles}\label{sec:prelim}

        	Let $\Sigma^n\subset \R^{n+1}$ be a half-bubble in the sense above. Let us denote by $\Pi_1$ the closed half-space bounded by $\Pi$ that contains the interior of $\Sigma$ and by $\Pi_2$ the other closed half-space bounded by $\Pi$. The terminology \textsl{half-bubble} can be justified as follows: if we reflect $\Sigma$ across $\Pi$ in $\R^{n+1}$ we get a minimal hypersurface without boundary, which a priori is only $C^1$, but a posteriori is $C^{1,\alpha}$ by means of a standard application of De Giorgi-Nash estimates, hence smooth by Schauder theory. Such a minimal hypersurface shall be denoted by $\check{\Sigma}$ and be referred to as the \textsl{double} of $\Sigma$.

        	In relation to the geometric applications we are about to present, we need to compare the Morse index of $\Sigma$ (as a free boundary minimal surface, thus with suitable boundary conditions) and the Morse index of $\check{\Sigma}$. In fact, our results will follow as a specification of a more general discussion.

        	\subsection{Schr\"odinger-type operators on involutive manifolds}\label{subs:general}
        	
        	Let $(\check{\Sigma}^{n},g)$ be a complete Riemannian manifold, without boundary (in Subsection \ref{subs:morsehalf} we will then specify our discussion to the ambient manifold $\check{\Sigma}$ presented in the previous paragraph, namely a bubble with its induced metric). Suppose there exists a Riemannian involution $\tau:(\check{\Sigma},g) \to (\check{\Sigma},g)$, namely a smooth isometry satisfying the two conditions:
        	\[
        	\tau\circ\tau = id, \ \ \tau\neq id.
        	\] 
        	
        Given a linear functional space $X$ consisting of functions defined on $(\check{\Sigma},g)$ let us then introduce the subspaces of even and odd functions with respect to the action of $\tau$:
        	\[
        	X_\mathcal{E}=\left\{\varphi\in X: \varphi\circ\tau=\varphi \right\}, \ X_\mathcal{O}=\left\{\varphi\in X: \varphi\circ\tau=-\varphi \right\}.
        	\]	
        	In particular, notice that 
        	\[
        	C^{\infty}=C^{\infty}_{\mathcal{E}}\oplus C^{\infty}_{\mathcal{O}}.
        	\]
        	This direct sum is orthogonal in $L^2$ when restricted to the subspace of smooth, compactly supported functions. In fact, we have
        	\[
        	L^2=L^2_{\mathcal{E}}\oplus^{\perp} L^2_{\mathcal{O}}.
        	\]
        Given a function $V\in C^{\infty}_{\mathcal{E}}$ we wish to study a Schr\"odinger-type operator of the form
        	\[
        	T_V\varphi=\Delta_g\varphi+V\varphi, 
        	\]
        	together with the associated quadratic form
        	\[
        	Q_V(\varphi,\varphi)=-\int_{\check{\Sigma}}\varphi T_V \varphi\,d\h^n=\int_{\check{\Sigma}}(|\nabla \varphi|^2-V\varphi^2)\,d\h^n
        	\]
        	which a priori is defined on the set of smooth, compactly supported functions.

        	\begin{definition}\label{def:index} In the setting above, we define
        		\[
        		Ind (Q_V), \ \ Ind_{\mathcal{E}} (Q_V), \ \ Ind_{\mathcal{O}} (Q_V)
        		\]
        		as the largest dimension of a linear subspace of 
        		\[
        		C^{\infty}_c, \ \ (C^{\infty}_c)_{\mathcal{E}}, \ \ (C^{\infty}_c)_{\mathcal{O}}
        		\]
        		respectively, where the quadratic form $Q_V$ is negative definite.
        	\end{definition}

        	Thereby, the following inequality is straightforward:
        	
        	\begin{lem}\label{lem:ineq}
        		In the setting above, 
        		\[
        		Ind (Q_V) \geq Ind_{\mathcal{E}} (Q_V)+Ind_{\mathcal{O}} (Q_V).
        		\]
        		and
        		\[
        		Ind (Q_V)=0 \Leftrightarrow Ind_{\mathcal{E}} (Q_V)=0 \ \text{and} \ Ind_{\mathcal{O}} (Q_V)=0.
        		\] 
        	\end{lem}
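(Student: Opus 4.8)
The plan is to observe that, since the potential $V$ is $\tau$-invariant and $\tau$ is an isometry, the quadratic form $Q_V$ is \emph{block-diagonal} with respect to the algebraic splitting $C^{\infty}_c=(C^{\infty}_c)_{\mathcal{E}}\oplus(C^{\infty}_c)_{\mathcal{O}}$; once this is established, both assertions reduce to elementary linear algebra. Concretely, for $\varphi=\varphi_{\mathcal{E}}+\varphi_{\mathcal{O}}$ with $\varphi_{\mathcal{E}}\in(C^{\infty}_c)_{\mathcal{E}}$ and $\varphi_{\mathcal{O}}\in(C^{\infty}_c)_{\mathcal{O}}$, expanding $Q_V(\varphi,\varphi)$ produces, besides $Q_V(\varphi_{\mathcal{E}},\varphi_{\mathcal{E}})$ and $Q_V(\varphi_{\mathcal{O}},\varphi_{\mathcal{O}})$, the cross term $2\int_{\check{\Sigma}}\big(\langle\nabla\varphi_{\mathcal{E}},\nabla\varphi_{\mathcal{O}}\rangle-V\varphi_{\mathcal{E}}\varphi_{\mathcal{O}}\big)\,d\h^n$. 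I would check that its integrand is an odd function under $\tau$: indeed $V\varphi_{\mathcal{E}}\varphi_{\mathcal{O}}$ changes sign because $V$ and $\varphi_{\mathcal{E}}$ are $\tau$-fixed while $\varphi_{\mathcal{O}}$ flips; and using the pointwise identity $\langle\nabla u,\nabla v\rangle_g\circ\tau=\langle\nabla(u\circ\tau),\nabla(v\circ\tau)\rangle_g$, valid because $\tau$ is an isometry, one gets $\langle\nabla\varphi_{\mathcal{E}},\nabla\varphi_{\mathcal{O}}\rangle_g\circ\tau=-\langle\nabla\varphi_{\mathcal{E}},\nabla\varphi_{\mathcal{O}}\rangle_g$. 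Since $\tau$ preserves the Riemannian volume $d\h^n$, the integral of a compactly supported odd function vanishes, so
\[
Q_V(\varphi,\varphi)=Q_V(\varphi_{\mathcal{E}},\varphi_{\mathcal{E}})+Q_V(\varphi_{\mathcal{O}},\varphi_{\mathcal{O}}),\qquad \varphi=\varphi_{\mathcal{E}}+\varphi_{\mathcal{O}}.
\]

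With this identity in hand, the inequality is immediate. I would take subspaces $E\subset(C^{\infty}_c)_{\mathcal{E}}$ and $O\subset(C^{\infty}_c)_{\mathcal{O}}$ realizing $Ind_{\mathcal{E}}(Q_V)$ and $Ind_{\mathcal{O}}(Q_V)$, i.e. on which $Q_V$ is negative definite. Since a function that is simultaneously even and odd is zero, $E\cap O=\{0\}$, so the sum $E+O$ is direct and has dimension $Ind_{\mathcal{E}}(Q_V)+Ind_{\mathcal{O}}(Q_V)$. For any nonzero $\varphi=\varphi_{\mathcal{E}}+\varphi_{\mathcal{O}}\in E\oplus O$ at least one of the two components is nonzero, and by the block-diagonal identity $Q_V(\varphi,\varphi)$ is a sum of two nonpositive terms, strictly negative on whichever component is nonzero; hence $Q_V(\varphi,\varphi)<0$. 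Therefore $Q_V$ is negative definite on $E\oplus O$, giving $Ind(Q_V)\geq Ind_{\mathcal{E}}(Q_V)+Ind_{\mathcal{O}}(Q_V)$.

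For the equivalence, one direction is trivial: if $Ind(Q_V)=0$ then, $(C^{\infty}_c)_{\mathcal{E}}$ and $(C^{\infty}_c)_{\mathcal{O}}$ being subspaces of $C^{\infty}_c$, $Q_V$ cannot be negative definite on any nonzero subspace of either, so $Ind_{\mathcal{E}}(Q_V)=Ind_{\mathcal{O}}(Q_V)=0$. Conversely, if $Ind(Q_V)\geq 1$, choose $\varphi\in C^{\infty}_c$ with $Q_V(\varphi,\varphi)<0$; the block-diagonal identity forces $Q_V(\varphi_{\mathcal{E}},\varphi_{\mathcal{E}})<0$ or $Q_V(\varphi_{\mathcal{O}},\varphi_{\mathcal{O}})<0$, hence $Ind_{\mathcal{E}}(Q_V)\geq 1$ or $Ind_{\mathcal{O}}(Q_V)\geq 1$. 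Taking contrapositives yields the stated equivalence. The only step requiring any care — and it is hardly an obstacle — is the verification of the transformation rule for $\langle\nabla u,\nabla v\rangle$ under the isometry $\tau$ together with the $\tau$-invariance of the volume form; the rest is bookkeeping on the even/odd decomposition.
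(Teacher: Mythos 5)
Your proof is correct and follows essentially the same route as the paper: both establish that the mixed term $Q_V(\varphi_{\mathcal{E}},\varphi_{\mathcal{O}})$ vanishes (the paper via $T_V$ preserving the even/odd splitting, you via direct verification that the integrand of the gradient form is $\tau$-odd, which amounts to the same symmetry argument) and then deduce the block-diagonal identity and the two claims by elementary linear algebra. You merely spell out the final linear-algebra bookkeeping that the paper compresses into "which easily implies the claims."
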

        	
        	\begin{proof} Since $V$ is even and $\tau$ is an isometry, it is immediate to check that the operator $T_V$ preserves the decomposition of $C^{\infty}$ into even and odd functions. Thus, if $\varphi_{\mathcal{E}}\in (C^{\infty}_c)_{\mathcal{E}}$ and $\varphi_{\mathcal{O}}\in (C^{\infty}_c)_{\mathcal{O}}$ then
        		\begin{equation}\label{eq:mixedprod}
        		Q_{V}(\varphi_{\mathcal{E}},\varphi_{\mathcal{O}})=0,
        		\end{equation}
        		and hence, by bilinearity, given any $\varphi\in C^{\infty}_c$ and writing $\varphi=\varphi_{\mathcal{E}}+\varphi_{\mathcal{O}}$ one has
        		\[
        		Q_{V}(\varphi,\varphi)=Q_{V}(\varphi_{\mathcal{E}},\varphi_{\mathcal{E}})+Q_{V}(\varphi_{\mathcal{O}},\varphi_{\mathcal{O}}),
        		\]
        		which easily implies the claims.
        	\end{proof}

        	In fact, we claim that the inequality above is actually an equality. 
        	With that goal in mind, we restrict to the case of \textsl{finite} Morse index and recall a simple but useful result:
        	
        	\begin{prop}\label{pro:l2spec}(cf. \cite{FC85} Proposition 2)
        		In the setting above, the following two statements are equivalent:
        		\begin{enumerate}
        			\item{the index of the quadratic form $Q_V$ is finite;}
        			\item{there exists a finite dimensional subspace $W$ of $L^2$ having an orthonormal basis $\varphi^1,\ldots,\varphi^k$ consisting of eigenfunctions of $T_V$ with eigenvalues $\lambda_1,\ldots,\lambda_k$ respectively. Each $\lambda_i$ is negative and $Q_V(\varphi,\varphi)\geq 0$ whenever $\varphi\in C^{\infty}_c\cap W^{\perp}$. In this case $k$ equals the Morse index of $Q_V$.}	
        		\end{enumerate}		
        	\end{prop}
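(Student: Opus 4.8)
\emph{Strategy of proof.} The plan is to establish the two implications separately: the direction $(2)\Rightarrow(1)$ is elementary, while $(1)\Rightarrow(2)$ requires a compact exhaustion together with a spectral limiting argument, in the spirit of Fischer--Colbrie \cite{FC85}. The first thing I would record is the following consequence of finiteness of the index, to be used throughout: if $Ind(Q_V)=k<\infty$, then there exists a compact set $K\subset\check{\Sigma}$ with $Q_V(\varphi,\varphi)\geq 0$ for all $\varphi\in C^{\infty}_c(\check{\Sigma}\setminus K)$. Indeed, were this false one could construct inductively a sequence of test functions with pairwise disjoint supports and negative $Q_V$-energy, contradicting $Ind(Q_V)<\infty$.

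For $(2)\Rightarrow(1)$: given $W=\mathrm{span}\{\varphi^1,\dots,\varphi^k\}\subset L^2$ as in the statement, $Q_V$ is negative definite on $W$ because the $\varphi^i$ form an $L^2$-orthonormal system of eigenfunctions with negative eigenvalues; a cutoff and density argument then transplants this to a $k$-dimensional negative-definite subspace of $C^{\infty}_c$, giving $Ind(Q_V)\geq k$. Conversely, if $E\subset C^{\infty}_c$ had dimension $k+1$ with $Q_V<0$ on $E\setminus\{0\}$, then the $L^2$-orthogonal projection $E\to W$ would have a non-trivial kernel, producing $0\neq\varphi\in E$ with $\varphi\perp_{L^2}W$; since then $\varphi\in C^{\infty}_c\cap W^{\perp}$ we would get $Q_V(\varphi,\varphi)\geq 0$, a contradiction. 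Hence $Ind(Q_V)=k<\infty$.

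For $(1)\Rightarrow(2)$: fix an exhaustion $\Omega_1\subset\Omega_2\subset\cdots$ of $\check{\Sigma}$ by smooth relatively compact domains with $\bigcup_j\Omega_j=\check{\Sigma}$ and $K\subset\Omega_1$, and let $\lambda^{(j)}_1\leq\lambda^{(j)}_2\leq\cdots$ be the Dirichlet eigenvalues of $T_V$ on $\Omega_j$ (in the sign convention for which negative eigenvalues correspond to negativity of $Q_V$). The span of the Dirichlet eigenfunctions with negative eigenvalue is a negative-definite subspace of $C^{\infty}_c(\check{\Sigma})$, so $\Omega_j$ has at most $k:=Ind(Q_V)$ negative Dirichlet eigenvalues; conversely a $k$-dimensional negative-definite subspace of $C^{\infty}_c$ is supported in some $\Omega_{j_0}$, forcing $\lambda^{(j)}_k<0$ for $j\geq j_0$; and domain monotonicity makes $j\mapsto\lambda^{(j)}_i$ non-increasing. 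Using the preliminary observation, a partition-of-unity localization across $K$ furnishes a uniform lower bound $\lambda^{(j)}_1\geq -C$. Consequently $\lambda_i:=\lim_j\lambda^{(j)}_i$ exists and is negative for $i\leq k$, while $\lambda_{k+1}:=\lim_j\lambda^{(j)}_{k+1}\geq 0$. Normalizing the corresponding Dirichlet eigenfunctions in $L^2$ and using interior Schauder estimates, one extracts $C^{\infty}_{\mathrm{loc}}$-subsequential limits $u_1,\dots,u_k$, each an eigenfunction of $T_V$ with eigenvalue $\lambda_i$; the crucial point — that no $L^2$-mass escapes to infinity, so that $\|u_i\|_{L^2}=1$ and the $u_i$ are $L^2$-orthonormal — would again be derived from the non-negativity of $Q_V$ on $\check{\Sigma}\setminus K$ together with the uniform lower bound on the eigenvalues. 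Then $W:=\mathrm{span}\{u_1,\dots,u_k\}$ has dimension $k=Ind(Q_V)$, and it remains to check $Q_V(\varphi,\varphi)\geq 0$ for $\varphi\in C^{\infty}_c\cap W^{\perp}$. This I would do by a limiting min-max argument: such a $\varphi$ is supported in some $\Omega_\ell$, and since $\langle\varphi,u^{(m)}_i\rangle\to\langle\varphi,u_i\rangle=0$ as $m\to\infty$, writing $\varphi$ as its $L^2$-component in $\mathrm{span}\{u^{(m)}_1,\dots,u^{(m)}_k\}$ plus a remainder $\psi_m$ orthogonal to that space, the variational characterization of $\lambda^{(m)}_{k+1}$ gives $Q_V(\varphi,\varphi)=Q_V(\psi_m,\psi_m)+o(1)\geq\lambda^{(m)}_{k+1}\|\psi_m\|^2+o(1)\to\lambda_{k+1}\|\varphi\|^2\geq 0$.

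The hard part, as is always the case for such statements on complete non-compact manifolds, is the control of $L^2$-mass at infinity in the limiting procedure of $(1)\Rightarrow(2)$: one must exclude the scenario in which the normalized Dirichlet eigenfunctions converge to $0$ on every compact set while keeping unit $L^2$-norm by pushing their mass off to infinity. This is precisely the step that uses finiteness of the index in an essential rather than merely bookkeeping way, through the non-negativity of $Q_V$ outside a compact set and the ensuing uniform spectral lower bound. Everything else — interior elliptic estimates, cutoff and density manipulations, domain monotonicity of Dirichlet eigenvalues, and the variational characterization of eigenvalues — is routine, and complete details can be found in \cite{FC85}.
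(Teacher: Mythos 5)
Your proposal is correct and follows essentially the argument of Fischer--Colbrie that the paper itself invokes without proof (the statement is given there only with the citation ``cf.\ \cite{FC85} Proposition 2''): exhaustion by compact domains, domain monotonicity of the Dirichlet eigenvalues, a uniform spectral lower bound coming from non-negativity of $Q_V$ outside a compact set, and a limiting min-max argument, with the tightness of the normalized eigenfunctions correctly identified as the one non-routine step. No discrepancy with the paper's (cited) approach.
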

        	
        	
        	\begin{rmk}\label{rmk:Vbounded} If $V$ is assumed to be bounded, then the basis provided in Proposition \ref{pro:l2spec} actually consists of elements of finite Dirichlet energy, i.e. functions belonging to $W^{1,2}$, the closure of $C^{\infty}_c$ with respect to the Sobolev norm determined by
        		\begin{equation}\label{eq:SobNorm}
        		\|\varphi\|^2_{W^{1,2}}:= \|\varphi\|^2_{L^2}+\|\nabla \varphi\|^2_{L^2}.
        		\end{equation}	
        	\end{rmk}

        	Based on this fact and in view of the geometric applications we wish to present, we \emph{assume} from now onwards that the function	$V\in C^{\infty}_{\mathcal{E}}$ is uniformly bounded.

        	It is convenient to introduce the relevant notion of (point) spectrum in this setting.
        	
        	\begin{definition}\label{def:spec} In the setting above, we define
        		\[
        		spec (Q_V):=\left\{\text{critical values of the map} \ \varphi\in W^{1,2}\setminus \left\{0\right\} \mapsto \frac{Q_V(\varphi,\varphi)}{\|\varphi\|^2_{L^2}}\right\};
        		\]
        		\[
        		spec_{\mathcal{E}} (Q_V):=\left\{\text{critical values of the map} \ \varphi\in (W^{1,2})_{\mathcal{E}}\setminus \left\{0\right\} \mapsto \frac{Q_V(\varphi,\varphi)}{\|\varphi\|^2_{L^2}}\right\};
        		\]
        		\[
        		spec_{\mathcal{O}} (Q_V):=\left\{\text{critical values of the map} \ \varphi\in (W^{1,2})_{\mathcal{O}}\setminus \left\{0\right\} \mapsto \frac{Q_V(\varphi,\varphi)}{\|\varphi\|^2_{L^2}}\right\}.
        		\]
        	\end{definition}
        To avoid ambiguities: $\lambda\in spec(Q_V)$ if and only if we can find an associated critical point $\varphi \in W^{1,2}\setminus\left\{0\right\}$, which satisfies
        	\begin{equation}\label{eq:weak}
        	\int_{\check{\Sigma}}(\nabla \varphi \cdot \nabla\zeta -V\varphi\zeta)\,d\h^n = \lambda \int_{\check{\Sigma}}\varphi\zeta\,d\h^n, \ \ \ \forall \zeta\in C^{\infty}_c,
        	\end{equation}
        	whence it is standard to get that $\varphi$ is actually smooth and solves, in a classical sense, the eigenvalue equation
        	\[
        	\Delta_g \varphi+V\varphi+\lambda\varphi=0
        	\]
        	which also implies that $\Delta_g\varphi \in L^2$. 
        	
        	Similar arguments can be applied to the other two cases as well, with an important caveat. By definition, we have that $\lambda\in spec_{\mathcal{E}} (Q_V)$ (respectively $\lambda\in spec_{\mathcal{O}} (Q_V)$) if equation \eqref{eq:weak} is satisfied for every \textsl{even} (respectively \textsl{odd}) test function $\zeta\in C^{\infty}_c$. However, we also have by symmetry arguments (cf. equation \eqref{eq:mixedprod}) that the same equation is satisfied for every \textsl{odd} (respectively \textsl{even}) test function, and hence for any $\zeta\in C^{\infty}_c$ (in either of the two cases). Therefore, we shall actually conclude
        	\[
        	\lambda\in spec_{\mathcal{E}} (Q_V) \ \Leftrightarrow \ \exists \varphi\in  C^{\infty}_{\mathcal{E}}\setminus \left\{0\right\} \ \text{such that} \	\Delta_g \varphi+V\varphi+\lambda\varphi=0,
        	\]
        	and
        	\[
        	\lambda\in spec_{\mathcal{O}} (Q_V) \ \Leftrightarrow \ \exists \varphi\in  C^{\infty}_{\mathcal{O}}\setminus \left\{0\right\} \ \text{such that} \	\Delta_g \varphi+V\varphi+\lambda\varphi=0.
        	\]

        	\begin{lem}\label{lem:specadd}
        		In the setting above,
        		\[
        		spec (Q_V)=spec_{\mathcal{E}} (Q_V) \cup 	spec_{\mathcal{O}} (Q_V).
        		\]
        		Moreover, under the assumption that the operator $T_V$ has finite Morse index on $(\check{\Sigma},g)$, we have
        		\[
        		Ind (Q_V) = Ind_{\mathcal{E}} (Q_V)+Ind_{\mathcal{O}} (Q_V).
        		\]
        	\end{lem}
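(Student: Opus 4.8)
The plan is to treat the spectrum identity and the index identity separately, using throughout the observation from the proof of Lemma~\ref{lem:ineq} that $T_V$ commutes with the pull-back $\tau^*$ (equivalently, that $T_V$ respects the decompositions of $C^{\infty}$, of $W^{1,2}$ and of $L^2$ into even and odd parts), together with the orthogonality $L^2=L^2_{\mathcal{E}}\oplus^{\perp}L^2_{\mathcal{O}}$.

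For the spectrum identity, the inclusions $spec_{\mathcal{E}}(Q_V)\cup spec_{\mathcal{O}}(Q_V)\subseteq spec(Q_V)$ are essentially built into the characterisations derived just before the statement: a critical point of the Rayleigh quotient restricted to $(W^{1,2})_{\mathcal{E}}$ (resp.\ to $(W^{1,2})_{\mathcal{O}}$) already satisfies the weak eigenvalue equation \eqref{eq:weak} against \emph{all} test functions $\zeta\in C^{\infty}_c$, hence is a genuine critical point of the unrestricted quotient. For the reverse inclusion I would take $\lambda\in spec(Q_V)$ with an associated smooth eigenfunction $\varphi\in W^{1,2}\setminus\{0\}$ solving $\Delta_g\varphi+V\varphi+\lambda\varphi=0$, split $\varphi=\varphi_{\mathcal{E}}+\varphi_{\mathcal{O}}$, and apply $\tau^*$ to the equation; since $\tau$ is an isometry and $V$ is even, taking the half-sum and half-difference shows that $\varphi_{\mathcal{E}}$ and $\varphi_{\mathcal{O}}$ (which again lie in $W^{1,2}$, being images of $\varphi$ under the bounded operators $\tfrac12(\mathrm{id}\pm\tau^*)$, and are smooth) each solve the same equation. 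As $\varphi\neq 0$, at least one of them is non-zero, and whichever it is witnesses $\lambda\in spec_{\mathcal{E}}(Q_V)\cup spec_{\mathcal{O}}(Q_V)$.

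For the index identity, Lemma~\ref{lem:ineq} already supplies the inequality $Ind(Q_V)\geq Ind_{\mathcal{E}}(Q_V)+Ind_{\mathcal{O}}(Q_V)$, so only the reverse is needed. Assuming finite Morse index $k=Ind(Q_V)$, Proposition~\ref{pro:l2spec} (together with Remark~\ref{rmk:Vbounded}) furnishes a $k$-dimensional subspace $W\subset W^{1,2}\cap L^2$, namely the $L^2$-span of the eigenfunctions with negative eigenvalue, on whose $L^2$-orthogonal complement intersected with $C^{\infty}_c$ the form $Q_V$ is non-negative. The crucial point is that each negative eigenspace, and hence $W$, is $\tau^*$-invariant (the same computation as in Lemma~\ref{lem:ineq}: $T_V(\tau^*\psi)=\tau^*(T_V\psi)$), so $W=W_{\mathcal{E}}\oplus W_{\mathcal{O}}$ with $W_{\mathcal{E}}=W\cap L^2_{\mathcal{E}}$, $W_{\mathcal{O}}=W\cap L^2_{\mathcal{O}}$ and $\dim W_{\mathcal{E}}+\dim W_{\mathcal{O}}=k$. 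Then I would show $Ind_{\mathcal{E}}(Q_V)\leq\dim W_{\mathcal{E}}$ as follows: an even $\varphi\in C^{\infty}_c$ that is $L^2$-orthogonal to $W_{\mathcal{E}}$ is automatically orthogonal to $W_{\mathcal{O}}\subset L^2_{\mathcal{O}}$, hence to all of $W$, so $Q_V(\varphi,\varphi)\geq 0$ for every such $\varphi$; consequently any subspace $U\subseteq(C^{\infty}_c)_{\mathcal{E}}$ on which $Q_V$ is negative definite intersects the kernel of the evaluation map $\varphi\mapsto(\langle\varphi,w_j\rangle_{L^2})_j$ (against a basis $\{w_j\}$ of $W_{\mathcal{E}}$) only in $\{0\}$, whence $\dim U\leq\dim W_{\mathcal{E}}$. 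The odd case is identical, and summing the two bounds yields $Ind_{\mathcal{E}}(Q_V)+Ind_{\mathcal{O}}(Q_V)\leq\dim W_{\mathcal{E}}+\dim W_{\mathcal{O}}=k=Ind(Q_V)$.

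The only step that is not purely formal is the $\tau^*$-invariance of the negative eigenspaces (and hence of $W$), which relies precisely on $V$ being even and $\tau$ being an isometry, exactly as in Lemma~\ref{lem:ineq}; granting this, both identities reduce to the parity decomposition of eigenfunctions and a finite-dimensional codimension count using $L^2_{\mathcal{E}}\perp L^2_{\mathcal{O}}$.
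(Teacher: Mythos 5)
Your treatment of the spectral identity is correct and coincides with the paper's: splitting an eigenfunction into its even and odd parts (equivalently, applying $\tfrac12(\mathrm{id}\pm\tau^{*})$) and observing that each part again solves the eigenvalue equation, with at least one part non-zero, is exactly the argument given there.

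The index identity, however, contains a genuine logical gap: the inequality you end up proving points in the wrong direction. Lemma \ref{lem:ineq} gives $Ind(Q_V)\geq Ind_{\mathcal{E}}(Q_V)+Ind_{\mathcal{O}}(Q_V)$, so what is missing for equality is $Ind(Q_V)\leq Ind_{\mathcal{E}}(Q_V)+Ind_{\mathcal{O}}(Q_V)$. Your codimension count is itself sound, but it yields $Ind_{\mathcal{E}}(Q_V)\leq\dim W_{\mathcal{E}}$ and $Ind_{\mathcal{O}}(Q_V)\leq\dim W_{\mathcal{O}}$, hence $Ind_{\mathcal{E}}(Q_V)+Ind_{\mathcal{O}}(Q_V)\leq k=Ind(Q_V)$ --- which is the \emph{same} inequality as Lemma \ref{lem:ineq}, merely re-derived by another route; combining it with Lemma \ref{lem:ineq} gives nothing new and certainly not equality. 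The objects you have set up do suffice, but they must be used in the opposite way: since $W$ is $\tau^{*}$-invariant and spanned by eigenfunctions with negative eigenvalues, $W_{\mathcal{E}}$ is spanned by \emph{even} eigenfunctions with negative eigenvalues and $W_{\mathcal{O}}$ by odd ones; after truncating these by cutoffs (they lie in $W^{1,2}$, not in $C^{\infty}_c$, so an approximation as in the proof of Lemma \ref{lem:II1} is needed) they produce negative subspaces for the even and odd problems, giving $Ind_{\mathcal{E}}(Q_V)\geq\dim W_{\mathcal{E}}$ and $Ind_{\mathcal{O}}(Q_V)\geq\dim W_{\mathcal{O}}$, whence $Ind_{\mathcal{E}}(Q_V)+Ind_{\mathcal{O}}(Q_V)\geq k=Ind(Q_V)$. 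This is precisely what the paper does: it decomposes each of the $k$ eigenfunctions $\varphi^{j}$ into $\varphi^{j}_{\mathcal{E}}+\varphi^{j}_{\mathcal{O}}$, notes that the span of all the parts has dimension at least $k$, and reads off the required lower bound on $Ind_{\mathcal{E}}(Q_V)+Ind_{\mathcal{O}}(Q_V)$ before invoking Lemma \ref{lem:ineq} for the converse.
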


        	\begin{proof}
        		For the first claim, notice that the inclusion $\supseteq$ is obvious so let us discuss the other one. Let then $\lambda \in spec (Q_V)$ and write $\varphi=\varphi_{\mathcal{E}}+\varphi_{\mathcal{O}}$ (the $L^2$ decomposition of an associated eigenfunction into even and odd parts), thus by linearity
        		\[
        		0=\Delta_g \varphi+V\varphi+\lambda\varphi= \underbrace{(\Delta_g \varphi_{\mathcal{E}}+V\varphi_{\mathcal{E}}+\lambda\varphi_{\mathcal{E}})}_{\in L^2_{\mathcal{E}} } + \underbrace{(\Delta_g \varphi_{\mathcal{O}}+V\varphi_{\mathcal{O}}+\lambda\varphi_{\mathcal{O}})}_{\in L^2_{\mathcal{O}}}
        		\]
        		hence each of the two summands must vanish and thus (since either $\varphi_{\mathcal{E}}\neq 0$ or $\varphi_{\mathcal{O}}\neq 0$) we get $\lambda\in spec_{\mathcal{E}} (Q_V)$ or $\lambda\in spec_{\mathcal{O}} (Q_V)$ as it was claimed.

        		For the second part, recall that by Proposition \ref{pro:l2spec} there exist (under the assumption that $T_V$ has finite Morse index) finitely many (say $k$) eigenfunctions $\varphi^1,\ldots,\varphi^k \in L^2$ that correspond to the negative eigenvalues $\lambda_1,\ldots,\lambda_k$ (where it is understood that each eigenvalue can be repeated if it comes with multiplicity). 
        		Following the argument we just presented, replace each $\varphi^j$ by means of the couple $\varphi^j_{\mathcal{E}}, \varphi^j_{\mathcal{O}}$ and set
        		\[
        		V=\text{span}_{\R}\left\{\varphi^1,\ldots,\varphi^k \right\}, \ \ \widetilde{V}=\text{span}_{\R}\left\{\varphi^1_{\mathcal{E}}, \varphi^1_{\mathcal{O}},\ldots, \varphi^k_{\mathcal{E}}, \varphi^k_{\mathcal{O}} \right\}.
        		\] 
        		Now, since $\varphi^j=\varphi^j_{\mathcal{E}}+\varphi^j_{\mathcal{O}}$ for any $j\in\left\{1,\ldots, k\right\}$ we have that
        		\[
        		\text{dim}_{\R}(\widetilde{V})\geq k
        		\]
        		and thus there are at least $k$ functions in the collection $\left\{\varphi^1_{\mathcal{E}}, \varphi^1_{\mathcal{O}},\ldots, \varphi^k_{\mathcal{E}}, \varphi^k_{\mathcal{O}} \right\}$ which are not zero, and eigenfunctions either for $spec_{\mathcal{E}} (Q_V)$  or for $spec_{\mathcal{O}} (Q_V)$, with negative eigenvalues. Thereby, the inequality one gets, combined together with Lemma \ref{lem:ineq}, allows to complete the proof.
        	\end{proof}

        	\subsection{The Morse index of half-bubbles}\label{subs:morsehalf}
        	
        	The discussion presented in the previous section directly applies, as a special case, to the study of the Jacobi operator
        	that is associated to the symmetrized complete minimal hypersurface that is obtained by reflecting a half-bubble. 
        	
        	Given a half-bubble $\Sigma$ we define, in analogy with Definition \ref{def:index} above, its Morse index considering the Jacobi form
\[        	
Q_{|A|^2}(u,u):=\int_{\Sigma}(|\nabla u|^2-|A|^2u^2)\,d\h^n.
\] 
More precisely, we give the following:
        	
        	\begin{definition} In the setting above, we define
        		$index (\Sigma)$ to be the largest dimension of a linear subspace of $C^{\infty}_c(\Sigma)$ where $Q_{|A|^2}$ is negative definite. 
        	\end{definition}	
        	\noindent Notice that here the support of $\varphi\in C^{\infty}_c(\Sigma)$ can intersect $\partial\Sigma$ so that we are imposing no condition along $\partial\Sigma$, and $index (\Sigma)$ is the (standard) Morse index of $\Sigma$ as a free boundary minimal hypersurface.
        	
        	\begin{lem}\label{lem:II1}
        A two-dimensional half-bubble has finite Morse index and this equals $Ind_{\mathcal{E}}(Q_{|\check{A}|^2})$ where $\check{\Sigma}$ is the double of $\Sigma$.
         The same conclusion holds true in all dimensions for any half-bubble $\Sigma$ having Euclidean volume growth, meaning that there exists a constant $C=C(\Sigma)$ such that $\h^n(\Sigma\cap B_R(0))\leq C R^n$ for all $R\geq 0$.
        	\end{lem}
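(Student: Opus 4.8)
The plan is to deduce the lemma from the abstract results of Subsection~\ref{subs:general}, applied to the double $\check\Sigma$. Let $\tau\colon\check\Sigma\to\check\Sigma$ be the isometric involution induced on the double by the Euclidean reflection across the hyperplane $\Pi$. Since $\partial\Sigma\neq\emptyset$, the double $\check\Sigma$ is connected, and by the regularity discussion at the beginning of this section it is a complete, smooth, boundaryless minimal hypersurface in $\R^{n+1}$. The potential $V:=|\check A|^2$ is $\tau$-invariant (the second fundamental form of $\check\Sigma$ is preserved by the ambient reflection inducing $\tau$) and uniformly bounded: for $n=2$ this follows from $|\check A|\to0$ along each end of a complete minimal surface of finite total curvature, and for $n\geq3$ from $\varepsilon$-regularity together with $\mathcal{A}(\check\Sigma)=2\mathcal{A}(\Sigma)<\infty$ and the Euclidean volume growth hypothesis. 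Hence $(\check\Sigma,g)$, $\tau$ and $V$ satisfy the standing assumptions of Subsection~\ref{subs:general}, and here $Q_V=Q_{|\check A|^2}$. It then remains to establish $(\mathrm{A})$ the identity $index(\Sigma)=Ind_{\mathcal E}(Q_{|\check A|^2})$ and $(\mathrm{B})$ the finiteness of this quantity.

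For $(\mathrm{A})$ I would argue by even reflection across $\Pi$. Regarding $\Sigma$ as one of the two copies of $\Sigma$ inside $\check\Sigma$, the even extension $u\mapsto\check u$ is a linear isomorphism from compactly supported $W^{1,2}$ functions on $\Sigma$ onto compactly supported $\tau$-even $W^{1,2}$ functions on $\check\Sigma$, with inverse the restriction to $\Sigma$; splitting the integral defining $Q_{|\check A|^2}^{\check\Sigma}$ into the two isometric copies of $\Sigma$ yields, with no boundary contribution,
\[
Q_{|\check A|^2}^{\check\Sigma}(\check u,\check u)=2\,Q_{|A|^2}^{\Sigma}(u,u).
\]
Thus the index of $Q_{|A|^2}^{\Sigma}$ over compactly supported $W^{1,2}$ functions on $\Sigma$ coincides with the index of $Q_{|\check A|^2}^{\check\Sigma}$ over compactly supported $\tau$-even $W^{1,2}$ functions on $\check\Sigma$. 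To conclude, one checks that these $W^{1,2}$-indices agree with their smooth counterparts $index(\Sigma)$ and $Ind_{\mathcal E}(Q_{|\check A|^2})$: a compactly supported $W^{1,2}$ function is a $W^{1,2}$-limit of smooth compactly supported ones by local mollification (after harmlessly extending across $\partial\Sigma$ in the first case), in the even case one restores $\tau$-invariance by averaging the mollifications over $\tau$, which does not worsen the $W^{1,2}$ error, and since $|\check A|^2$ is bounded the quadratic forms are continuous for the $W^{1,2}$ norm, so any negative-definite subspace can be approximated by one of the same dimension inside the smooth class. This yields $index(\Sigma)=Ind_{\mathcal E}(Q_{|\check A|^2})$.

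For $(\mathrm{B})$, Lemma~\ref{lem:ineq} gives $Ind_{\mathcal E}(Q_{|\check A|^2})\leq Ind(Q_{|\check A|^2})$, and $Ind(Q_{|\check A|^2})$ is the Morse index of the complete minimal hypersurface $\check\Sigma\subset\R^{n+1}$. When $n=2$, $\check\Sigma$ is a complete minimal surface in $\R^3$ of finite total curvature, hence of finite Morse index by the classical theory of Fischer--Colbrie \cite{FC85} (passing to the orientation double cover if $\check\Sigma$ happens to be one-sided, which does not affect finiteness); when $n\geq3$, the finiteness of the index of $\check\Sigma$ follows from the Cheng--Tysk bound on the Morse index in terms of the total curvature \cite{CT94}, using $\mathcal{A}(\check\Sigma)<\infty$ and the (doubling-invariant) Euclidean volume growth. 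In either case $Ind(Q_{|\check A|^2})<\infty$, so $index(\Sigma)<\infty$, which completes the proof. The one genuinely delicate point is the function-space bookkeeping in $(\mathrm{A})$: the even reflection of an admissible variation of the free boundary problem need not be smooth---merely Lipschitz across $\partial\Sigma$---so $C^\infty_c(\Sigma)$ is not literally carried onto $(C^\infty_c)_{\mathcal E}(\check\Sigma)$, and the comparison must be routed through the common $W^{1,2}$ completion, which is precisely where the uniform boundedness of $|\check A|^2$ enters.
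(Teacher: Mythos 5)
Your proposal is correct and takes essentially the same route as the paper's proof: both establish $index(\Sigma)\geq Ind_{\mathcal{E}}(Q_{|\check A|^2})$ by restriction of even test functions, obtain the reverse inequality via even reflection of $C^\infty_c(\Sigma)$ test functions followed by an approximation step to land back in the smooth class, and deduce finiteness from the finiteness of $Ind(Q_{|\check A|^2})$ (Fischer--Colbrie for $n=2$; the paper cites Tysk \cite{T89} rather than \cite{CT94} for $n\geq 3$, but either reference works). The one place where you are more careful than the paper's terse argument is exactly the point you flag: the paper speaks only of a uniform approximation of the Lipschitz even extensions and then argues linear independence, whereas to actually preserve negative definiteness of the quadratic form on the approximating span one needs $W^{1,2}$-convergence together with $W^{1,2}$-continuity of $Q_{|\check A|^2}$ (which is where the boundedness of $|\check A|^2$ enters), and one should also $\tau$-average the mollifications to keep the test functions even --- you handle both points correctly.
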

        
           \begin{proof}
        	The argument for the first part ($n=2$) goes as follows: by definition $\Sigma$ has finite total curvature, so its double $\check{\Sigma}$ will also have finite total curvature, hence finite Morse index (again by \cite{FC85}); thus Lemma \ref{lem:ineq} implies that the corresponding \textsl{even} Morse index (that is to say: the Morse index on even test functions, as per Definition \ref{def:index}) is also finite. Now, going back to the definitions it is clear that by restriction $index (\Sigma)\geq Ind_{\mathcal{E}}(Q_{|\check{A}|^2})$.
        	
        	For the converse inequality, we argue by contradiction as follows. If it were $index (\Sigma)> Ind_{\mathcal{E}}(Q_{|\check{A}|^2})$, we could consider a basis $\left\{\varphi^1,\ldots, \varphi^I\right\}$ for a subspace of $C^{\infty}_c(\Sigma)$, having dimension equal to $I:=Ind_{\mathcal{E}}(Q_{|\check{A}|^2})+1$, where $Q_{|A|^2}$ is negative definite. 
        			If we extend each function by even symmetry, we obtain a family of even functions $\left\{\check{\varphi}^1,\ldots \check{\varphi}^I\right\}$ where the Jacobi form of $\check{\Sigma}$ is negative definite; each function is smooth away from $\partial\Sigma$  and we can consider for each $i\in\left\{1,\ldots, I\right\}$ an approximating sequence of smooth functions $\left\{\check{\varphi}^i_k\right\}$ that also have compact support, and such that $\check{\varphi}^i_k\to\check{\varphi}^i$ uniformly as one lets $k\to\infty$.
        	 		 We claim that for any $k$ large enough the family $\left\{\check{\varphi}^1_k,\ldots \check{\varphi}^I_k\right\}$ is linearly independent, which would then violate the definition of $Ind_{\mathcal{E}}(Q_{|\check{A}|^2})$ and conclude the proof.
        			 If the claim were false, we could write down (for every $k\in\mathbb{N}$) a linear equation of the form
        		 \[
        		 \sum_i a^i_k \check{\varphi}^i_k=0, \ \text{where} \ \sum_i (a^i_k)^2=1,
        		 \]
        		whence passing to the limit for $k\to\infty$, we end up finding a (non-trivial) linear relation involving 
        	$\left\{\varphi^1,\ldots, \varphi^I\right\}$, that is impossible since this family of functions was chosen to be a basis.
        	 In higher dimensions, one can follow the same argument modulo invoking the main theorem of J. Tysk in \cite{T89}.
        	\end{proof}

        	We can instead consider the Morse index of $\Sigma$ with Dirichlet boundary conditions:
        	
        	\begin{definition} In the setting above, we define
        		$index_{\bullet} (\Sigma)$ to be the largest dimension of a linear subspace of $C^{\infty}_c(\mathring{\Sigma})$ where $Q_{|A|^2}$ is negative definite.
        	\end{definition}
        	
        	Here $\mathring{\Sigma}:=\Sigma\setminus\partial\Sigma$ is the interior of $\Sigma$, and so this is the standard notion of Morse index for minimal surfaces with respect to variations that fix the boundary. Following the same conceptual scheme as above, we have the following ancillary result:
        	
        	\begin{lem}\label{lem:II2}
        		A two-dimensional half-bubble has finite Morse index \emph{with Dirichlet boundary conditions} and this equals $Ind_{\mathcal{O}}(Q_{|\check{A}|^2})$ where $\check{\Sigma}$ is the double of $\Sigma$.
        		The same conclusion holds true in all dimensions for any half-bubble $\Sigma$ having Euclidean volume growth, meaning that there exists a constant $C=C(\Sigma)$ such that $\h^n(\Sigma\cap B_R(0))\leq C R^n$ for all $R\geq 0$.
        	\end{lem}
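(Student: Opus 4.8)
The plan is to transcribe the proof of Lemma \ref{lem:II1} with even symmetry replaced by odd symmetry and with free boundary test functions replaced by test functions compactly supported in the interior $\mathring{\Sigma}$. First I would record finiteness: since $\Sigma$ has finite total curvature, so does its double $\check{\Sigma}$, hence $\check{\Sigma}$ has finite Morse index --- by \cite{FC85} when $n=2$, and by the main theorem of Tysk \cite{T89} when $n\geq 3$ and $\Sigma$ (hence $\check{\Sigma}$) has Euclidean volume growth. Therefore $Ind(Q_{|\check{A}|^2})<\infty$, and Lemma \ref{lem:ineq} forces $Ind_{\mathcal{O}}(Q_{|\check{A}|^2})<\infty$; so it suffices to prove the two inequalities $index_{\bullet}(\Sigma)\leq Ind_{\mathcal{O}}(Q_{|\check{A}|^2})$ and $index_{\bullet}(\Sigma)\geq Ind_{\mathcal{O}}(Q_{|\check{A}|^2})$. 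Throughout I would use the reflection involution $\tau$ of $\check{\Sigma}$ across $\Pi$, whose fixed-point set is $\check{\Sigma}\cap\Pi=\partial\Sigma$, together with the elementary identity $Q_{|\check{A}|^2}(\check{\varphi},\check{\psi})=2\,Q_{|A|^2}(\varphi,\psi)$, valid whenever $\check{\varphi},\check{\psi}$ are odd extensions across $\Pi$ of functions $\varphi,\psi$ on $\Sigma$ (it holds since $|\check{A}|^2$, $\check{\varphi}\check{\psi}$ and $\langle\nabla\check{\varphi},\nabla\check{\psi}\rangle$ are $\tau$-invariant).

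Next I would dispatch the inequality $index_{\bullet}(\Sigma)\leq Ind_{\mathcal{O}}(Q_{|\check{A}|^2})$, which requires no approximation. Arguing by contradiction, suppose one can find a basis $\{\varphi^1,\dots,\varphi^I\}$, with $I=Ind_{\mathcal{O}}(Q_{|\check{A}|^2})+1$, of a subspace of $C^\infty_c(\mathring{\Sigma})$ on which $Q_{|A|^2}$ is negative definite. Each $\varphi^i$ vanishes in a neighbourhood of $\partial\Sigma$, so its odd extension $\check{\varphi}^i$ is a genuinely smooth, compactly supported function on $\check{\Sigma}$, that is, an element of $(C^\infty_c(\check{\Sigma}))_{\mathcal{O}}$; as odd extension is injective, $\mathrm{span}\{\check{\varphi}^1,\dots,\check{\varphi}^I\}$ is $I$-dimensional, and the identity above shows $Q_{|\check{A}|^2}$ is negative definite on it --- contradicting the definition of $Ind_{\mathcal{O}}(Q_{|\check{A}|^2})$.

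Finally, for the reverse inequality $index_{\bullet}(\Sigma)\geq Ind_{\mathcal{O}}(Q_{|\check{A}|^2})$ I would start from a basis $\{\check{\varphi}^1,\dots,\check{\varphi}^{I_0}\}$, $I_0=Ind_{\mathcal{O}}(Q_{|\check{A}|^2})$, of a subspace of $(C^\infty_c(\check{\Sigma}))_{\mathcal{O}}$ where $Q_{|\check{A}|^2}$ is negative definite, and restrict: $\varphi^i:=\check{\varphi}^i|_{\Sigma}\in C^\infty_c(\Sigma)$ are linearly independent, vanish on $\partial\Sigma$ (being odd), and $Q_{|A|^2}$ is negative definite on their span by the identity above. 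The one technical point is that the $\varphi^i$ need not be supported in $\mathring{\Sigma}$, so I would multiply by a smooth cutoff $\eta_\delta$ equal to $0$ within distance $\delta$ of $\partial\Sigma$ and to $1$ outside distance $2\delta$, with $|\nabla\eta_\delta|\leq C/\delta$: since $\varphi^i$ is smooth, compactly supported and vanishes on $\partial\Sigma$ one has $|\varphi^i|\leq C\,\mathrm{dist}(\cdot,\partial\Sigma)$ on the collar, whose intersection with $\mathrm{supp}\,\varphi^i$ has $\h^n$-measure at most $C\delta$, so $\|\varphi^i\nabla\eta_\delta\|_{L^2}^2\leq C\delta\to 0$ and $\eta_\delta\varphi^i\to\varphi^i$ in $W^{1,2}(\Sigma)$ (the other terms in the difference tending to $0$ by dominated convergence). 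Using that $|A|^2$ is bounded, $Q_{|A|^2}(\cdot,\cdot)$ and $\langle\cdot,\cdot\rangle_{L^2}$ are continuous on $W^{1,2}(\Sigma)$, so for $\delta$ small the family $\{\eta_\delta\varphi^1,\dots,\eta_\delta\varphi^{I_0}\}\subset C^\infty_c(\mathring{\Sigma})$ is still linearly independent with $Q_{|A|^2}$ negative definite on its span, yielding $index_{\bullet}(\Sigma)\geq I_0$. The main obstacle I anticipate is precisely this last cutoff argument --- securing the $W^{1,2}$-approximation and the consequent persistence of negative definiteness and of linear independence, which is where boundedness of the second fundamental form (finite total curvature for $n=2$; Tysk's hypotheses in higher dimensions) must be invoked; everything else is a direct copy of the even-symmetry reasoning in Lemma \ref{lem:II1}.
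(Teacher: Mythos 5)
Your proposal is correct and follows the same route as the paper: finiteness via the finite index of the double, the easy inequality by odd extension (which is smooth since functions in $C^\infty_c(\mathring{\Sigma})$ vanish near $\partial\Sigma$), and the reverse inequality by restricting an odd negative subspace and truncating with a cutoff supported away from $\partial\Sigma$. The paper explicitly omits the "standard details" of this last truncation step, and your $W^{1,2}$-approximation argument (using $|\varphi^i|\leq C\,\mathrm{dist}(\cdot,\partial\Sigma)$ and the $O(\delta)$ measure of the collar to control $\|\varphi^i\nabla\eta_\delta\|_{L^2}$) supplies them correctly.
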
	
        	
        	\begin{proof}
        	Arguing as for the lemma above, let us focus on $n=2$. When $\Sigma$ is a two-dimensional half-bubble, the finiteness of $index_{\bullet} (\Sigma)\leq index(\Sigma)$ follows straight from Lemma \ref{lem:II1}. That being said, it is clear that we also have the inequality
        	$index_{\bullet} (\Sigma)\leq Ind_{\mathcal{O}}(Q_{|\check{A}|^2})$ by odd extension. The converse inequality
        	relies instead on the fact that odd functions must vanish along $\partial\Sigma=Fix(\tau)$ so that one can consider a linearly independent set of $Ind_{\mathcal{O}}(Q_{|\check{A}|^2})$ elements spanning a subspace on which $Q_{|\check{A}|^2}<0$, restrict each such function to $\Sigma$ and then truncate using a compactly supported cutoff function. We omit the standard details.
        		\end{proof}
        	
        Based on Lemma \ref{lem:II1} and \ref{lem:II2} one can rephrase the inequality $index_{\bullet} (\Sigma)\leq index(\Sigma)$ as
        	\begin{equation}\label{eq:EindexvsOindex}
        	Ind_{\mathcal{E}}(Q_{|\check{A}|^2})\geq  Ind_{\mathcal{O}}(Q_{|\check{A}|^2}),
        	\end{equation}
        	hence, using the second part of Lemma \ref{lem:specadd}, we get the estimate
        	\begin{equation}\label{eq:fundineq}
        	2 Ind_{\mathcal{O}}(Q_{|\check{A}|^2})\leq Ind(Q_{|\check{A}|^2})\leq 2 Ind_{\mathcal{E}}(Q_{|\check{A}|^2}).
        	\end{equation}
        	From there, we can derive some interesting characterizations for the cases when the Morse index of $\check{\Sigma}$ (which is, in our notation, $	Ind(Q_{|\check{A}|^2})$) equals 0, 1 or 2:
        	
        	\begin{cor}\label{cor:lowindex}
        		In the setting above, we have 
        		\begin{align*}
        		&\text{(1)}& 		Ind(Q_{|\check{A}|^2})=0 \ \Leftrightarrow Ind_{\mathcal{E}}(Q_{|\check{A}|^2})=0 \ \text{and} \ Ind_{\mathcal{O}}(Q_{|\check{A}|^2})=0; \\
        		&\text{(2)}& 		Ind(Q_{|\check{A}|^2})=1 \ \Leftrightarrow Ind_{\mathcal{E}}(Q_{|\check{A}|^2})=1 \ \text{and} \ Ind_{\mathcal{O}}(Q_{|\check{A}|^2})=0;\\
        		&\text{(3)}&		Ind(Q_{|\check{A}|^2})=2 \ \Leftrightarrow 
        				\begin{cases}Ind_{\mathcal{E}}(Q_{|\check{A}|^2})=2 \ \text{and} \ Ind_{\mathcal{O}}(Q_{|\check{A}|^2})=0 \ \ \textbf{or} \\ 
        				 Ind_{\mathcal{E}}(Q_{|\check{A}|^2})=1 \ \text{and} \ Ind_{\mathcal{O}}(Q_{|\check{A}|^2})=1.
        				\end{cases}
        				\end{align*}
        	\end{cor}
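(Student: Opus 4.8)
The plan is to reduce everything to the additivity identity of Lemma \ref{lem:specadd} together with the comparison $Ind_{\mathcal{E}}(Q_{|\check{A}|^2})\geq Ind_{\mathcal{O}}(Q_{|\check{A}|^2})$ recorded in \eqref{eq:EindexvsOindex}, and then to enumerate the finitely many ways a pair of non-negative integers, the larger one listed first, can sum to $0$, $1$ or $2$.

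First I would record that the additivity is actually available in the present situation. Since $\Sigma$ is a half-bubble, its double $\check{\Sigma}$ is a complete minimal hypersurface of finite total curvature, hence — by the results invoked in the proof of Lemma \ref{lem:II1}, namely \cite{FC85} when $n=2$ and \cite{T89} under the Euclidean volume growth hypothesis — it has finite Morse index. Thus the hypothesis of the second part of Lemma \ref{lem:specadd} is met and one has
$Ind(Q_{|\check{A}|^2})=Ind_{\mathcal{E}}(Q_{|\check{A}|^2})+Ind_{\mathcal{O}}(Q_{|\check{A}|^2})$.

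Combining this with \eqref{eq:EindexvsOindex}, and writing for brevity $a:=Ind_{\mathcal{E}}(Q_{|\check{A}|^2})$, $b:=Ind_{\mathcal{O}}(Q_{|\check{A}|^2})$, one has two non-negative integers with $a\geq b$ and $a+b=Ind(Q_{|\check{A}|^2})$. For the forward implications this is pure bookkeeping: $a+b=0$ forces $a=b=0$ (this is also immediate from the second clause of Lemma \ref{lem:ineq}); $a+b=1$ forces $(a,b)=(1,0)$; and $a+b=2$ forces either $(a,b)=(2,0)$ or $(a,b)=(1,1)$. For the converse implications, each pair listed on the right-hand side of (1), (2), (3) has sum $0$, $1$, $2$ respectively, so the same additivity identity returns the asserted value of $Ind(Q_{|\check{A}|^2})$.

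I do not expect any real obstacle: the entire content is contained in Lemma \ref{lem:specadd} and \eqref{eq:EindexvsOindex}, and the only point deserving a word of care is ensuring that the genuine \emph{equality} $Ind=Ind_{\mathcal{E}}+Ind_{\mathcal{O}}$ (rather than the a priori weaker inequality of Lemma \ref{lem:ineq}) is in force, which is precisely why one begins by noting the finiteness of $Ind(Q_{|\check{A}|^2})$ coming from the finite total curvature of $\check{\Sigma}$.
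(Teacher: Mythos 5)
Your proposal is correct and follows essentially the same route the paper intends: the corollary is stated as an immediate consequence of the additivity $Ind(Q_{|\check{A}|^2})=Ind_{\mathcal{E}}(Q_{|\check{A}|^2})+Ind_{\mathcal{O}}(Q_{|\check{A}|^2})$ from Lemma \ref{lem:specadd} together with the inequality \eqref{eq:EindexvsOindex}, plus the finite enumeration of ordered pairs summing to $0$, $1$ or $2$. Your added care in justifying the finiteness hypothesis of Lemma \ref{lem:specadd} via the finite total curvature of $\check{\Sigma}$ (through \cite{FC85}, resp. \cite{T89}) is exactly the point the paper relies on implicitly in Lemma \ref{lem:II1}.
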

        	
        	So far our discussion is applicable to hypersurfaces of dimension $n\geq 2$, and we shall now specify them to the special case $n=2$, where some interesting consequences can be drawn.
        	Indeed, we can turn classification results for complete minimal surfaces in $\R^{3}$ into classification results for free boundary minimal hypersurfaces contained in a half-space. The first one is a Bernstein-type theorem for half-bubbles:
        	
        	\begin{cor}\label{cor:stable}
        		A two-dimensional, stable half-bubble is isometric to a half-plane.	
        	\end{cor}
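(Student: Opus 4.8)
The plan is to pass to the double $\check{\Sigma}$ and deduce flatness from the classical rigidity theorem for complete stable minimal surfaces in $\R^3$. Recall that a two-dimensional half-bubble has, by definition, finite total curvature, so its double $\check{\Sigma}$ does too, and the entire discussion of Subsection \ref{subs:morsehalf} applies. Stability of $\Sigma$ means precisely that $index(\Sigma)=0$, so Lemma \ref{lem:II1} gives $Ind_{\mathcal{E}}(Q_{|\check{A}|^2})=index(\Sigma)=0$. Feeding this into the right-hand inequality of \eqref{eq:fundineq} immediately forces $Ind(Q_{|\check{A}|^2})=0$ (equivalently, one may invoke Corollary \ref{cor:lowindex}(1), after noting that $Ind_{\mathcal{O}}(Q_{|\check{A}|^2})=index_{\bullet}(\Sigma)\le index(\Sigma)=0$ by Lemma \ref{lem:II2}). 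Hence $\check{\Sigma}$ is a complete, two-sided (being a properly embedded hypersurface of $\R^3$), stable minimal surface of finite total curvature.

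Next I would apply the Bernstein-type theorem of do Carmo--Peng, Fischer-Colbrie--Schoen and Pogorelov: a complete, oriented, stable minimal surface in $\R^3$ is a plane. Therefore $\check{\Sigma}$ is a plane $P\subset\R^3$. It then remains to identify $\Sigma$ itself. By construction $\check{\Sigma}=\Sigma\cup\tau(\Sigma)$ is invariant under the reflection $\tau$ across the bounding plane $\Pi$, and $\tau$ restricts to a nontrivial isometry of $P$; consequently $P$ is neither $\Pi$ itself nor a plane parallel to $\Pi$, so it must meet $\Pi$ orthogonally along a line. Since the interior of $\Sigma$ lies in the open side $\Pi_1\setminus\Pi$, the interior of $\tau(\Sigma)$ lies in $\Pi_2\setminus\Pi$, and $\Sigma\cup\tau(\Sigma)=P$, one concludes $\Sigma=P\cap\Pi_1$, a closed half-plane; in particular $\Sigma$ is isometric to a half-plane.

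There is no serious obstacle here: the substance of the argument is entirely contained in Lemmas \ref{lem:II1}--\ref{lem:II2} and the classical rigidity theorem, and the only point requiring (routine) care is excluding, in the final step, the degenerate configurations $P=\Pi$ and $P$ parallel to $\Pi$ — both of which are ruled out because $\Sigma$ has non-empty free boundary along $\Pi$ and $\tau\neq id$ on $\check{\Sigma}$.
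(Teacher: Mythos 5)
Your proof is correct and follows essentially the same route as the paper: reduce to the double $\check{\Sigma}$ via Lemma \ref{lem:II1} and inequality \eqref{eq:fundineq} (equivalently Corollary \ref{cor:lowindex}(1)), conclude stability of $\check{\Sigma}$, and invoke the classical rigidity theorem of do Carmo--Peng, Fischer-Colbrie--Schoen and Pogorelov. The only difference is that you spell out the final identification of $\Sigma$ as a half-plane, which the paper leaves implicit.
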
	
        	
        	\begin{proof}
        		It suffices to combine part (1) of Corollary \ref{cor:lowindex}, inequality \eqref{eq:EindexvsOindex} and the characterization of stable minimal surfaces in $\R^{3}$ (see \cite{dCP79,FCS80,Pog81}).
        	\end{proof}
        	
        	\begin{rmk}\label{rmk:higher-stable}
        	In fact, the stability estimates by Schoen-Simon allow to obtain a higher-dimensional counterpart of the previous result: \textsl{Let $\Sigma^n\subset\R^{n+1}$, $2\leq n\leq 6$ be a half-bubble. If $\Sigma$ is stable and has Euclidean volume growth, then $\Sigma$ is a hyperplane.}
        	\end{rmk}

        	Similarly, we get a simple result in the index one case:	
        	
        	\begin{cor}\label{cor:indexone}
        		A two-dimensional, index one half-bubble is isometric to a half-catenoid.	
        	\end{cor}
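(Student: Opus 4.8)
The plan is to transfer everything to the double $\check{\Sigma}$ and then invoke the classification of complete minimal surfaces of index one in $\R^{3}$. Throughout, $\tau$ denotes the reflective involution of $\check{\Sigma}$ across the bounding plane $\Pi$, so that $\partial\Sigma=Fix(\tau)$ and $\check{\Sigma}$ is the double of $\Sigma$.

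\emph{Step 1 (reduction).} First I would record that, by Lemma \ref{lem:II1}, the hypothesis $index(\Sigma)=1$ is equivalent to $Ind_{\mathcal{E}}(Q_{|\check{A}|^2})=1$. Combining this with \eqref{eq:EindexvsOindex} and the additivity in the second part of Lemma \ref{lem:specadd} gives $Ind(Q_{|\check{A}|^2})=1+Ind_{\mathcal{O}}(Q_{|\check{A}|^2})$ with $Ind_{\mathcal{O}}(Q_{|\check{A}|^2})\in\{0,1\}$; moreover $Ind_{\mathcal{O}}(Q_{|\check{A}|^2})=index_{\bullet}(\Sigma)$ by Lemma \ref{lem:II2}. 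So it suffices to show that an index-one half-bubble has trivial Dirichlet index, $index_{\bullet}(\Sigma)=0$, since then $Ind(Q_{|\check{A}|^2})=1$.

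\emph{Step 2 (the Dirichlet index vanishes).} Here I would argue by contradiction: assume $index_{\bullet}(\Sigma)\ge 1$. Since $\Sigma$ has finite total curvature, $|A|^{2}$ is bounded, so by Proposition \ref{pro:l2spec} and Remark \ref{rmk:Vbounded} the Jacobi operator $L=\Delta+|A|^{2}$ on $\Sigma$ has a first free-boundary (Neumann) eigenfunction $\phi_{1}\in W^{1,2}$ with eigenvalue $\lambda_{1}^{\mathrm{fb}}<0$, and (because $index_{\bullet}(\Sigma)\ge 1$) a first Dirichlet eigenfunction $\psi_{1}\in W^{1,2}$ with eigenvalue $\lambda_{1}^{D}<0$. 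As $\Sigma$ is connected, $\phi_{1}$ may be taken strictly positive on all of $\Sigma$, whereas $\psi_{1}$ vanishes on $\partial\Sigma$; a positive function cannot be a Dirichlet eigenfunction, so a standard argument with the Hopf lemma and the strong maximum principle yields $\lambda_{1}^{\mathrm{fb}}<\lambda_{1}^{D}$. Green's identity, together with $\psi_{1}|_{\partial\Sigma}=0$ and $\partial_{\nu}\phi_{1}|_{\partial\Sigma}=0$, then forces $\phi_{1}\perp\psi_{1}$ in $L^{2}(\Sigma)$, so $Q_{|A|^2}$ is negative definite on the $2$-dimensional space $\mathrm{span}(\phi_{1},\psi_{1})$; approximating a basis in $W^{1,2}$ by functions in $C^{\infty}_{c}(\Sigma)$ (again using that $|A|^{2}$ is bounded) gives $index(\Sigma)\ge 2$, a contradiction. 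Hence $index_{\bullet}(\Sigma)=0$, and by Step 1 $Ind(Q_{|\check{A}|^2})=1$.

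\emph{Step 3 (identification).} Now $\check{\Sigma}$ is a complete minimal surface in $\R^{3}$ of index one, and it is embedded: reflection across $\Pi$ glues $\mathring{\Sigma}\subset\mathring{\Pi}_{1}$ and its mirror image $\tau(\mathring{\Sigma})\subset\mathring{\Pi}_{2}$ along $\partial\Sigma$, and orthogonality of $\Sigma$ and $\Pi$ makes the result a $C^{1}$ (hence smooth) embedded hypersurface. By the classification of complete minimal surfaces of index one in $\R^{3}$ (L\'opez and Ros), $\check{\Sigma}$ is a catenoid or Enneper's surface; the latter is not embedded, so $\check{\Sigma}$ is a catenoid. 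Finally, $\Sigma$ is the closure of one of the two connected components of $\check{\Sigma}\setminus Fix(\tau)$, so I would just enumerate the involutive isometries of the catenoid whose fixed set has codimension one: using the intrinsic metric $\cosh^{2}v\,(du^{2}+dv^{2})$ on $(\R/2\pi\Z)\times\R$ (equivalently, the rigid motions of $\R^{3}$ preserving the catenoid), the curvature function forces such an isometry to be $(u,v)\mapsto(-u+c,v)$, with fixed set a pair of meridians, or $(u,v)\mapsto(u,-v)$, with fixed set the waist circle. Cutting along these gives respectively the vertically cut and the horizontally cut half-catenoid, so $\Sigma$ is isometric to a half-catenoid in the sense of Definition \ref{def:horcurvercut}.

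\emph{Expected main obstacle.} The hard part will be Step 2: everything downstream (embeddedness of $\check{\Sigma}$, the known index-one classification, and the elementary symmetry analysis of the catenoid) is routine, but upgrading the purely arithmetic dichotomy $Ind(Q_{|\check{A}|^2})\in\{1,2\}$ to the sharp value $1$ requires the analytic input above, and one must be careful that the eigenfunctions used genuinely lie in $W^{1,2}$ so that the $C^{\infty}_{c}$-based definition of index is not circumvented — which is exactly what finite total curvature buys, via Proposition \ref{pro:l2spec} and Remark \ref{rmk:Vbounded}.
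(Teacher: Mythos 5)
Your Steps 1 and 3 match the structure of the argument: the reduction via Lemmas \ref{lem:II1}, \ref{lem:II2}, \ref{lem:specadd} and inequality \eqref{eq:EindexvsOindex} gives $Ind(Q_{|\check{A}|^2})\in\{1,2\}$, and once $\check{\Sigma}$ is known to be a complete embedded minimal surface of index one the L\'opez--Ros classification \cite{LR89} identifies it as a catenoid. The problem is entirely in Step 2, which is where you yourself flagged the main difficulty, and the argument you give there does not work.

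The claimed $L^2$-orthogonality of the Neumann eigenfunction $\phi_1$ and the Dirichlet eigenfunction $\psi_1$ is false, and Green's identity actually proves the opposite. With $L=\Delta+|A|^2$, $L\phi_1=-\lambda_1^{\mathrm{fb}}\phi_1$, $L\psi_1=-\lambda_1^{D}\psi_1$, $\psi_1|_{\partial\Sigma}=0$ and $\partial_\nu\phi_1|_{\partial\Sigma}=0$, Green's second identity gives
\begin{equation*}
\big(\lambda_1^{\mathrm{fb}}-\lambda_1^{D}\big)\int_{\Sigma}\phi_1\psi_1\,d\h^2 \;=\; \int_{\partial\Sigma}\phi_1\,\partial_\nu\psi_1\,d\h^1.
\end{equation*}
Now $\phi_1>0$ everywhere, and by the Hopf lemma $\partial_\nu\psi_1<0$ on $\partial\Sigma$, so the right-hand side is strictly negative; combined with $\lambda_1^{\mathrm{fb}}<\lambda_1^{D}$ this yields $\int_{\Sigma}\phi_1\psi_1\,d\h^2>0$. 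So $\phi_1$ and $\psi_1$ are not orthogonal. Without orthogonality the conclusion that $Q_{|A|^2}$ is negative definite on $\mathrm{span}(\phi_1,\psi_1)$ does not follow: writing out the Gram matrix (using $Q(\phi_1,\psi_1)=\lambda_1^{\mathrm{fb}}\langle\phi_1,\psi_1\rangle$), the determinant condition for negative definiteness reduces to $\lambda_1^{D}\|\phi_1\|^2\|\psi_1\|^2<\lambda_1^{\mathrm{fb}}\langle\phi_1,\psi_1\rangle^2$, which is not implied by $\lambda_1^{\mathrm{fb}}<\lambda_1^{D}<0$ together with Cauchy--Schwarz. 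The same issue arises if you instead Gram--Schmidt $\psi_1$ against $\phi_1$: the resulting $Q$-value is $\lambda_1^{D}\|\psi_1\|^2-\lambda_1^{\mathrm{fb}}\langle\phi_1,\psi_1\rangle^2/\|\phi_1\|^2$, a negative quantity plus a positive one, with no sign control.

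The paper avoids Step 2 altogether: rather than trying to show $Ind_{\mathcal{O}}(Q_{|\check{A}|^2})=0$, it appeals to the Chodosh--Maximo theorem \cite{CM14}, which rules out complete embedded minimal surfaces in $\R^3$ of Morse index exactly two. This directly eliminates the case $Ind(Q_{|\check{A}|^2})=2$, after which L\'opez--Ros finishes. Tellingly, the discussion immediately following Corollary \ref{cor:indexone} observes that showing a half-bubble (of index two) has vanishing Dirichlet index ``does not directly follow from the results above'' --- this is precisely the control over the Dirichlet index that your Step 2 would require, and the authors explicitly acknowledge they do not have a direct argument for it. To make your proposal rigorous, replace Step 2 with the citation to \cite{CM14}.
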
	
        	
        	\begin{proof}
        		It suffices to combine inequality \eqref{eq:EindexvsOindex}, parts (2) and (3) of Corollary \ref{cor:lowindex}, the characterization of index one minimal surfaces in $\R^3$, see \cite{LR89}, and the non-existence result for index two complete minimal surfaces in $\R^3$, see \cite{CM14}.
        	\end{proof}
        	
        	In fact, inspired by recent work of Chodosh-Maximo \cite{CM14} we may wonder whether there exist two-dimensional half-bubbles whose Morse index equals two.
        	Obviously, a negative result would follow by proving that a half-bubble of Morse index 2 has vanishing Morse index with Dirichlet boundary conditions. Yet, this does not directly follow from the results above. 	
        	
        	\subsection{Geometry of half-bubbles}\label{subs:geomhalf}
        	
        	Let us discuss now some basic facts about the geometry and topology of half-bubbles. First of all, keeping in mind the quantization identity presented in Theorem \ref{thm:quant}, we wish to express the total curvature of a half-bubble in terms of its topology.
        	Clearly, in ambient dimension three ($n=2$) we can apply the Gauss equations to get
        	\begin{equation}\label{eq:half2nd}
        	2\mathcal{A}(\Sigma)=\mathcal{A}(\check{\Sigma})=-2\int_{\check{\Sigma}} K_{\check{\Sigma}}\,d\h^2
        	\end{equation}
        	while by Gauss-Bonnet (cf. Jorge-Meeks \cite{JM83}), denoted by $\gamma(\check{\Sigma})$ the genus of $\check{\Sigma}$ and by $b(\check{\Sigma})$ the number of its ends, we have
        	\begin{equation}\label{eq:GBcomplete}
        	\int_{\check{\Sigma}} K_{\check{\Sigma}}\,d\h^2=2\pi (\chi(\check{\Sigma})-b(\check{\Sigma}))=2\pi(2-2\gamma(\check{\Sigma})-2b(\check{\Sigma})) .
        	\end{equation}
        	It is then important to relate such data to the topological data of the half-bubble $\Sigma$. There are two easy examples of half-bubbles to be kept in mind throughout the following discussion, which we will prove in Lemma \ref{lem:euler} to be, roughly speaking, prototypical.
        	
        	\begin{definition}\label{def:horcurvercut} In the flat Euclidean space $\R^3$ we introduce the following terminology:
        		\begin{itemize} \item{the half-catenoid given by
        			\[
        			\left\{(x^1, x^2, x^3)\in \R^3 : \ (x^3)^2=\cosh ((x^1)^2+(x^2)^2), \ \ x^3\geq 0 \right\}
        			\]
        			will be called \textsl{horizontally cut half-catenoid};}	
       \item{the half-catenoid given by
        			\[
        		\left\{	(x^1, x^2, x^3)\in \R^3 : \ (x^3)^2=\cosh ((x^1)^2+(x^2)^2), \ \ x^1\geq 0 \right\}	
        			\]
        			will be called \textsl{vertically cut half-catenoid}.}
        			\end{itemize}
        	\end{definition}
        	
        	 Before proceeding further, it is helpful to recall some information about the asymptotic structure of complete minimal surfaces in $\R^3$.
        	
        	\begin{rmk}\label{rem:reginf}
        		Let $S$ be a complete, embedded minimal surface of $\R^3$. Then:
        		\begin{itemize}
        			\item{$S$ has finite total curvature if and only if it has finite Morse index (cf. \cite{FC85});}
        			\item{if $S$ satisfies either of these two equivalent assumptions then it is regular at infinity (cf. \cite{Sc83}), namely it can be decomposed, outside any sufficiently large compact set, into a finite number of connected components (named \textsl{ends}) and each such end can be described as a graph (over a plane in $\R^3$) of a defining function having an expansion of the form
        				\[
        				u(x')=a\log|x'|+b+\frac{c_1 x^1}{|x|^2}+\frac{c_2 x^2}{|x|^2}+O(|x'|^{-2}), \ |x'|\to\infty
        				\]
        				in suitable coordinates $x=(x^1,x^2,x^3)$, where $x'=(x^1,x^2)$.
        			}
        		\end{itemize}
        		Notice that if $\Sigma$ is a half-bubble then both facts apply to its double $\check{\Sigma}$ and hence, a posteriori, one can talk about the ends of a half-bubble and this notion is well-defined. Furthermore, each end of a half-bubble will have a precise asymptotic description of the type above.
        	\end{rmk}
        	
        		\begin{rmk}
        			We can somewhat strengthen Lemma \ref{lem:II1} and extend to half-bubbles the aforementioned equivalence result by Fischer-Colbrie:
        			\end{rmk}
        		\begin{prop}\label{pro:equiv}
        			A two-dimensional half-bubble has finite index if and only if it has finite total curvature.	The same conclusion holds true in all dimensions provided Euclidean volume growth is assumed.
        		\end{prop}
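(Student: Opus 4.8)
The plan is to transfer the statement to the double $\check\Sigma$, where it becomes the classical equivalence of finite Morse index and finite total curvature due to Fischer--Colbrie \cite{FC85} (and, for $n\ge 3$, to Tysk \cite{T89}). The key preliminary observation is that the reflection construction recalled at the start of this section --- $\Sigma\cup\tau(\Sigma)$ is $C^1$ along $\partial\Sigma$, hence $C^{1,\alpha}$ by De Giorgi--Nash, hence smooth by Schauder --- uses no a priori bound on the total curvature, so for \emph{any} complete, properly embedded minimal hypersurface $\Sigma$ contained in a half-space and meeting the bounding hyperplane orthogonally along $\partial\Sigma$ the double $\check\Sigma$ is a genuine smooth complete minimal hypersurface of $\R^{n+1}$, with $\mathcal{A}(\check\Sigma)=2\,\mathcal{A}(\Sigma)$. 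It therefore suffices to prove that $\Sigma$ has finite Morse index if and only if $\check\Sigma$ does. The subtle direction is ``$\Rightarrow$'', because the identity $Ind(Q_{|\check{A}|^2})=Ind_{\mathcal{E}}(Q_{|\check{A}|^2})+Ind_{\mathcal{O}}(Q_{|\check{A}|^2})$ of Lemma \ref{lem:specadd} was established \emph{assuming} $\check\Sigma$ has finite index, and so cannot be quoted here without circularity.

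If $\Sigma$ has finite total curvature, then so does $\check\Sigma$, which therefore has finite Morse index by \cite{FC85} (resp.\ by \cite{T89} when $n\ge 3$, the Euclidean volume growth of $\Sigma$ passing at once to $\check\Sigma$); then $index(\Sigma)=Ind_{\mathcal{E}}(Q_{|\check{A}|^2})\le Ind(Q_{|\check{A}|^2})<\infty$ by Lemmas \ref{lem:II1} and \ref{lem:ineq}. This is essentially Lemma \ref{lem:II1}, so the real content of the Proposition is the reverse implication.

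Assume then $index(\Sigma)<\infty$. Restricting even functions on $\check\Sigma$ to $\Sigma$, and odd functions on $\check\Sigma$ (which vanish along $Fix(\tau)=\partial\Sigma$) to $\mathring{\Sigma}$, exactly as in the proofs of Lemmas \ref{lem:II1} and \ref{lem:II2} --- and using only the elementary halves of those identities, which do not rely on finite total curvature --- one gets
\[
Ind_{\mathcal{E}}(Q_{|\check{A}|^2})\le index(\Sigma)<\infty,\qquad Ind_{\mathcal{O}}(Q_{|\check{A}|^2})\le index_{\bullet}(\Sigma)\le index(\Sigma)<\infty .
\]
It remains to upgrade this to $Ind(Q_{|\check{A}|^2})<\infty$ without presupposing it. Here one can run the following linear-algebraic strengthening of one direction of Lemma \ref{lem:specadd}: given a finite dimensional $W\subset C^\infty_c(\check\Sigma)$ on which $Q_{|\check{A}|^2}$ is negative definite, enlarge it to the (still finite dimensional) $\tau$-invariant span of $W$ and $\{u\circ\tau : u\in W\}$; this splits $Q_{|\check{A}|^2}$-orthogonally into its even and odd parts --- the mixed terms vanish by \eqref{eq:mixedprod} --- so its negative inertia index is the sum of those of the two parts, which are at most $Ind_{\mathcal{E}}(Q_{|\check{A}|^2})$ and $Ind_{\mathcal{O}}(Q_{|\check{A}|^2})$ respectively; since $W$ sits inside this span, $\dim W\le Ind_{\mathcal{E}}(Q_{|\check{A}|^2})+Ind_{\mathcal{O}}(Q_{|\check{A}|^2})$. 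Hence $Ind(Q_{|\check{A}|^2})<\infty$, so $\check\Sigma$ has finite Morse index; by \cite{FC85} (resp.\ \cite{T89}, under Euclidean volume growth for $n\ge 3$) it has finite total curvature, and therefore so does $\Sigma$, since $\mathcal{A}(\Sigma)=\tfrac12\mathcal{A}(\check\Sigma)<\infty$. (One may instead bypass the linear algebra: $index(\Sigma)<\infty$ forces $\Sigma$ to be stable outside a compact set $K$; taking $\check K=K\cup\tau(K)$ and splitting test functions into even and odd parts shows $\check\Sigma$ is stable outside $\check K$, and a complete minimal hypersurface which is stable outside a compact set has finite total curvature by \cite{FC85}, \cite{T89}.)

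The crux, as flagged above, is exactly this transfer of finiteness of the index through the doubling: the identity of Lemma \ref{lem:specadd} must not be invoked circularly, and the $\tau$-invariant-span trick --- equivalently, the ``stable outside a compact set'' reformulation --- is what makes the argument legitimate. Everything else (smoothness and minimality of $\check\Sigma$, the relation $\mathcal{A}(\check\Sigma)=2\mathcal{A}(\Sigma)$, and, for $n\ge 3$, the inheritance of Euclidean volume growth by the double) is routine, so the only analytic input is the classical equivalence theorems \cite{FC85}, \cite{T89} applied on the closed hypersurface $\check\Sigma$.
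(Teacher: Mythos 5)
Your proof is correct and follows the same overall route as the paper: double $\Sigma$ to the complete hypersurface $\check\Sigma$, apply the classical equivalence of Fischer--Colbrie (resp.\ Tysk, under Euclidean volume growth for $n\geq 3$) there, and transfer finiteness of the index back and forth through the even/odd decomposition. The difference is in how the direction ``$index(\Sigma)<\infty \Rightarrow$ finite total curvature'' is justified. The paper disposes of it in one line by citing Lemma \ref{lem:specadd} and the inequalities \eqref{eq:fundineq}; you are right that, read literally, this is delicate, since both the identity $Ind(Q_{|\check{A}|^2})=Ind_{\mathcal{E}}(Q_{|\check{A}|^2})+Ind_{\mathcal{O}}(Q_{|\check{A}|^2})$ and the right-hand inequality in \eqref{eq:fundineq} were derived under the a priori hypothesis that $\check\Sigma$ has finite index (via Proposition \ref{pro:l2spec}, and via Lemmas \ref{lem:II1}--\ref{lem:II2}, which assume finite total curvature), which is precisely what is to be proved. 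Your $\tau$-invariant-span argument supplies the missing \emph{unconditional} inequality $Ind(Q_{|\check{A}|^2})\leq Ind_{\mathcal{E}}(Q_{|\check{A}|^2})+Ind_{\mathcal{O}}(Q_{|\check{A}|^2})$ by pure linear algebra (the mixed terms vanish by \eqref{eq:mixedprod}), and combined with the restriction bounds $Ind_{\mathcal{E}}\leq index(\Sigma)$ and $Ind_{\mathcal{O}}\leq index_{\bullet}(\Sigma)\leq index(\Sigma)$ -- which indeed do not use finite total curvature -- it closes the argument cleanly; the ``stable outside a compact, $\tau$-invariant set'' reformulation you offer is an equally valid shortcut. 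The converse direction is, as you note, exactly Lemma \ref{lem:II1}. In short: same strategy as the paper, with a legitimate and worthwhile repair of the one step where the paper's citations would otherwise be circular.
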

        		One of the two implications has been proven above in Lemma \ref{lem:II1}, while the other follows by combining Lemma \ref{lem:specadd} and the inequalities in \eqref{eq:fundineq}.
        	
        	\
        	
        	Let us now proceed in our discussion, relating all half-bubbles to one of the two models given in Definition \ref{def:horcurvercut}.
        	
        	\begin{lem}\label{lem:euler}
        		Let $\Sigma$ be a half-bubble and let $\check{\Sigma}$ denote its double in $\R^3$. Then one of the following two alternative cases occurs:
        		\begin{enumerate}
        			\item{the number of ends of $\check{\Sigma}$ equals double the number of ends of $\Sigma$, and their Euler characteristics are related by the equation
        				\[
        				\chi(\check{\Sigma})=2\chi(\Sigma);
        				\]	
        			}
        			\item{the number of ends of $\check{\Sigma}$ equals the number of ends of $\Sigma$,  and their Euler characteristics are related by the equation
        				\[
        				\chi(\check{\Sigma})=2\chi(\Sigma)-b(\Sigma).
        				\]
        			}	
        			\end{enumerate}
        			In either case, one has that $\chi(\check{\Sigma})-b(\check{\Sigma})=2(\chi(\Sigma)-b(\Sigma))$.
        	\end{lem}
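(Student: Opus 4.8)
The plan is to transfer the topological information from $\check{\Sigma}$ to $\Sigma$ by analysing the action of the reflection $\check{\tau}$ on the ends, after passing to conformal compactifications. By \eqref{eq:half2nd} the double $\check{\Sigma}$ has finite total curvature, hence (Remark \ref{rem:reginf}) it is regular at infinity: it has finitely many ends $e_1,\dots,e_{b(\check{\Sigma})}$, each asymptotic to a plane, and it compactifies conformally to a closed, orientable Riemann surface $\overline{\check{\Sigma}}$. First I would check that $\check{\Sigma}$ is everywhere transverse to the mirror plane $\Pi$: at a point of tangency $\check{\tau}$ would fix a point with differential the identity, forcing $\check{\Sigma}=\Pi$, a degenerate configuration that we discard; consequently $\partial\Sigma=\check{\Sigma}\cap\Pi=\mathrm{Fix}(\check{\tau})$ is a properly embedded $1$–submanifold and $\check{\tau}$ is an \emph{orientation–reversing} isometric involution of $\check{\Sigma}$. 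Since $\check{\tau}$ is the restriction of a rigid motion and $\check{\Sigma}$ is regular at infinity, $\check{\tau}$ extends to an anticonformal involution $\overline{\check{\tau}}$ of $\overline{\check{\Sigma}}$ that permutes the punctures and whose fixed set is a finite union of circles; the quotient $\overline{\check{\Sigma}}/\overline{\check{\tau}}$ is then a compact bordered surface, canonically the end–compactification $\overline{\Sigma}$ of $\Sigma$.

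The Euler–characteristic bookkeeping, which I expect to be routine, then goes as follows. Let $p$ be the number of $2$–element $\overline{\check{\tau}}$–orbits among the punctures and $q$ the number of fixed punctures, so that $b(\check{\Sigma})=2p+q$; and since an end of $\Sigma$ is precisely a $\check{\tau}$–orbit of ends of $\check{\Sigma}$, one has $b(\Sigma)=p+q$. The $p$ swapped pairs lie away from $\Pi$ and compactify to interior punctures of $\overline{\Sigma}$, the $q$ fixed ones to boundary punctures; as deleting an interior point lowers $\chi$ by $1$ while deleting a boundary point leaves $\chi$ unchanged, $\chi(\Sigma)=\chi(\overline{\Sigma})-p$. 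The standard Euler–characteristic formula for a $\mathbb{Z}/2$–quotient, together with $\chi(\mathrm{Fix})=0$, gives $\chi(\overline{\check{\Sigma}})=2\chi(\overline{\Sigma})$, and combining this with $\chi(\check{\Sigma})=\chi(\overline{\check{\Sigma}})-b(\check{\Sigma})$ yields $\chi(\check{\Sigma})=2\chi(\Sigma)-q$, whence
\[
\chi(\check{\Sigma})-b(\check{\Sigma})=\bigl(2\chi(\Sigma)-q\bigr)-(2p+q)=2\bigl(\chi(\Sigma)-(p+q)\bigr)=2\bigl(\chi(\Sigma)-b(\Sigma)\bigr),
\]
which is the last assertion of the lemma and holds with no case distinction.

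It remains to prove the dichotomy $pq=0$; granting it, case $(1)$ is exactly $q=0$ (then $b(\check{\Sigma})=2b(\Sigma)$ and $\chi(\check{\Sigma})=2\chi(\Sigma)$) and case $(2)$ is exactly $p=0$ (then $b(\check{\Sigma})=b(\Sigma)$ and $\chi(\check{\Sigma})=2\chi(\Sigma)-b(\Sigma)$). This is the step I expect to be the real obstacle. Here I would invoke the structure theory of embedded complete minimal surfaces of finite total curvature in $\R^3$ (Schoen \cite{Sc83}): all ends of $\check{\Sigma}$ are asymptotic to \emph{parallel} planes, with common normal direction $v_0$; writing each end, outside a large ball, as the graph of a real–analytic function $u_i$ over the plane $\Pi_0$ through the origin normal to $v_0$, embeddedness forces $u_i-u_j$ to have a fixed sign at large radius whenever $i\neq j$. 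Since $\check{\tau}$ permutes the ends it preserves this parallel class, so $\check{\tau}v_0=\pm v_0$. If $\check{\tau}v_0=v_0$ then $v_0\in\Pi$ and in suitable coordinates $\check{\tau}(x_1,x_2,x_3)=(-x_1,x_2,x_3)$ with $v_0=e_3$, so $\check{\tau}$ carries the graph of $u_i$ to the graph of $\tilde u_i(x_1,x_2):=u_i(-x_1,x_2)$; were this a \emph{different} end $u_j$, the difference $u_i-u_j=u_i-\tilde u_i$ would have to be of fixed sign at large radius, which is impossible for a function odd in $x_1$ unless it vanishes identically — and then the two ends coincide by unique continuation. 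Thus $\check{\tau}$ fixes every end and $p=0$. If instead $\check{\tau}v_0=-v_0$ then $v_0$ is normal to $\Pi$ and, after a translation, $\check{\tau}$ is the reflection across $\Pi_0$, carrying the graph of $u_i$ to the graph of $-u_i$; a $\check{\tau}$–fixed end would force $u_i\equiv-u_i$, hence $u_i\equiv0$ and $\check{\Sigma}=\Pi$, which we have excluded — so every end is swapped with its mirror image and $q=0$.

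The delicate points in the last paragraph are invoking the parallel–ends theorem in precisely the form needed, and verifying that the reflected graph is again (part of) an end of $\check{\Sigma}$, which holds because $\check{\tau}(\check{\Sigma})=\check{\Sigma}$; it is the disjointness of distinct sheets, a consequence of embeddedness, that prevents a fixed end from coexisting with a swapped pair and so delivers $pq=0$. Everything in the first two paragraphs is then a routine point–set and algebraic–topology verification, and one should check at the outset that a non-trivial half-bubble is non-compact (so that $b(\check{\Sigma})\geq 1$ and $v_0$ is defined) and orientable.
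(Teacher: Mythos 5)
Your proof is correct, but it takes a genuinely different route from the paper's. The paper argues metrically: it applies Gauss--Bonnet to the truncations $\Sigma_{r_i}=\Sigma\cap\{|x|\leq r_i\}$ along generic radii, tracking the boundary contributions (the geodesic-curvature terms from the arcs at radius $r_i$, the vanishing geodesic curvature of $\partial\Sigma\subset\Sigma$, and in case (2) the $2b(\Sigma)$ exterior angles of $\pi/2+o(1)$), and then combines the result with $\mathcal{A}(\check{\Sigma})=2\mathcal{A}(\Sigma)$ and the Jorge--Meeks formula \eqref{eq:GBcomplete} for the double; the dichotomy on the ends is asserted rather tersely ``by symmetry.'' You instead do a purely topological computation --- conformal compactification, the Euler-characteristic formula $\chi(\overline{\check{\Sigma}})=2\chi(\overline{\Sigma})-\chi(\mathrm{Fix})$ for the anticonformal involution, and the bookkeeping of interior versus boundary punctures --- and you spend your effort where the paper spends none, namely on a genuine proof of the dichotomy $pq=0$ via Schoen's parallel-ends theorem, the ordering of graphical ends forced by embeddedness, and the parity argument distinguishing $\check{\tau}v_0=v_0$ from $\check{\tau}v_0=-v_0$. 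What your approach buys: the final identity $\chi(\check{\Sigma})-b(\check{\Sigma})=2(\chi(\Sigma)-b(\Sigma))$ drops out with no case distinction at all, and the dichotomy (which the paper leaves implicit) is actually justified. What the paper's approach buys: it never needs the conformal compactification or the quotient formula, and it produces along the way the explicit value of $\int_{\Sigma}K_\Sigma$ in terms of $\chi(\Sigma)$ and $b(\Sigma)$, which is what is actually used downstream in Corollary \ref{cor:chi}. All the steps you flag as delicate check out: transversality of $\check{\Sigma}$ to $\Pi$ follows from the free boundary condition, a fixed end in the case $v_0\perp\Pi$ does force $\check{\Sigma}=\Pi$ by unique continuation (excluded since $\Sigma$ meets $\Pi$ orthogonally), and the oddness-in-$x_1$ argument correctly rules out swapped pairs when $v_0\in\Pi$.
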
	
        	
        	Clearly, both cases do occur: (1) is exemplified by the horizontally cut half-catenoid, while (2) is exemplified by the vertically cut half-catenoid. 
        	
        	\begin{proof}
        		Based on Remark \ref{rem:reginf} applied to the symmetrized bubble $\check{\Sigma}$ we have that either each of its ends is contained in one of the half-spaces $\Pi_1$ and $\Pi_2$ (provided we remove from $\check{\Sigma}$ a sufficiently large ball centered at the origin) or instead all of them intersect $\Pi$. By symmetry, it is clear that in the first case the number of ends of $\check{\Sigma}$ equals double the number of ends of $\Sigma$ and we can justify the equation for the Euler characteristic using the Gauss-Bonnet theorem: since $\Sigma$ is assumed to have finite total curvature we can write for $\Sigma_r:=\Sigma \cap \left\{|x|\leq r\right\}$
        		\[
        		\int_{\Sigma}K_{\Sigma}\,d\h^2=\lim_{r\to\infty}\int_{\Sigma_r} K_{\Sigma}\,d\h^2
        		\]
        		and we can compute the limit on the right-hand side along a sequence of generic radii $\left\{r_i\right\}$, so that the intersection $\Sigma\cap \left\{|x|=r_i\right\}$ is transverse and thus consisting of finitely many, smooth closed curves, hence
        		\[
        		\int_{\Sigma_{r_i}} K_{\Sigma}\,d\h^2=2\pi \chi(\Sigma)-2\pi b(\Sigma)+o(1)
        		\]
        		(where we have used the fact that the geodesic curvature of $\partial\Sigma\subset\Sigma$ is zero at all points).
        		Thus the conclusion comes straight from combining this equation with \eqref{eq:half2nd} and \eqref{eq:GBcomplete}. This is the scenario corresponding to case (1).
        		
        		If instead all ends of $\check{\Sigma}$ intersect the plane $\Pi$ then we are in case (2) and the claim concerning $\chi(\check{\Sigma})$ comes, once again, by an elementary computation: this time we have, along a sequence of generic radii 
        		\[
        		\int_{\Sigma_{r_i}} K_{\Sigma}\,d\h^2=2\pi \chi(\Sigma)-\pi b(\Sigma)-\pi b(\Sigma)+o(1)
        		\]
        		by keeping the contributions of both the geodesic curvature and of the exterior angles (at the non-smooth points of $\Sigma_r$) into account. In that respect, notice that  there are exactly $2b(\Sigma)$ exterior angles, each of them being $\pi/2+o(1)$ as $i\to\infty$. Thereby, the conclusion follows.
        	\end{proof}

        	We can extend to half-bubbles some further classification results that are well-known for complete minimal surfaces in $\R^3$. This discussion parallels the one presented in the previous subsection, which was based on index-theoretic criteria instead.

        	\begin{cor}\label{cor:oneend}
        		A two-dimensional half-bubble with one end is isometric to either a half-plane or to a horizontally cut half-catenoid.
        	\end{cor}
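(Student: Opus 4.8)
The plan is to pass to the double $\check{\Sigma}$, classify it by means of the known structure theory for complete embedded minimal surfaces of finite total curvature in $\R^3$, and then recover $\Sigma$ from the reflective symmetry $\tau$. First I would assemble the facts already available from Section~\ref{sec:prelim}: the surface $\check{\Sigma}$ is connected, complete and embedded in $\R^3$ with finite total curvature, it carries the involutive ambient isometry $\tau$ (reflection across the hyperplane $\Pi$) with $\tau(\check{\Sigma})=\check{\Sigma}$, and $\Sigma=\check{\Sigma}\cap\Pi_1$, $\partial\Sigma=\check{\Sigma}\cap\Pi$. Since $\Sigma$ has exactly one end, Lemma~\ref{lem:euler} leaves only two possibilities for $\check{\Sigma}$: it has two ends (alternative~(1) of that lemma), or it has one end (alternative~(2)); in particular $\check{\Sigma}$ has at most two ends.

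I would then dispose of the subcase in which $\check{\Sigma}$ has a single end. Here the claim is that $\check{\Sigma}$ is a plane: by the regularity at infinity recalled in Remark~\ref{rem:reginf} its unique end is asymptotic either to a plane or to an end of a catenoid; the flux balancing relation (the first-variation identity applied to a constant ambient vector field on $\check{\Sigma}$ intersected with a large ball) forces the logarithmic growth coefficient of the single end to vanish, so the end is planar, and then one concludes by the classical uniqueness results for complete embedded minimal surfaces of finite total curvature (cf.~\cite{Sc83}). Consequently $\Sigma=\check{\Sigma}\cap\Pi_1$ is a half-plane, which is the first alternative in the statement.

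In the remaining subcase $\check{\Sigma}$ has two ends, and by Schoen's theorem (\cite{Sc83}) it is then a catenoid $C$, with axis $\ell$ characterized as the unique line about which $C$ is rotationally symmetric. Since $\tau$ preserves $C$ it preserves $\ell$; and an ambient reflection fixing a line $\ell$ setwise must have its mirror plane either containing $\ell$ or orthogonal to $\ell$, the orthogonal case being further constrained by $\tau(C)=C$ to have the mirror plane pass through the center of the waist circle of $C$. If $\Pi$ is this waist plane, then each end of $C$ lies on one side of $\Pi$, so alternative~(1) of Lemma~\ref{lem:euler} holds and $\Sigma=C\cap\Pi_1$ is one of the two congruent closed halves of $C$ cut by $\Pi$, that is, a horizontally cut half-catenoid. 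If instead $\Pi$ contains $\ell$, then each end of $C$ meets $\Pi$, so alternative~(2) of Lemma~\ref{lem:euler} holds, giving $b(\check{\Sigma})=b(\Sigma)$ and hence two ends for $\Sigma$, contrary to hypothesis. This covers all cases.

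The only genuinely external inputs are the rigidity statements for one-ended and for two-ended complete embedded minimal surfaces of finite total curvature in $\R^3$; beyond invoking these, the work is the elementary classification of reflective symmetries of the catenoid and the bookkeeping that pairs each such symmetry with the correct alternative of Lemma~\ref{lem:euler}, so I expect no new analytic obstacle beyond what is already contained in Section~\ref{sec:prelim}.
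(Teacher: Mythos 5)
Your proposal is correct and follows essentially the same route as the paper: pass to the double, use Lemma \ref{lem:euler} to see that $\check{\Sigma}$ has one or two ends, invoke Schoen's classification \cite{Sc83} to identify $\check{\Sigma}$ as a plane or a catenoid, and then rule out the vertically cut half-catenoid by counting the ends of $\Sigma$. The extra detail you supply (flux balancing for the one-ended case, the explicit classification of reflections preserving the catenoid) is just an unpacking of what the paper cites or states more tersely.
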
	
        	
        	\begin{proof}
        		Denoted by $\Sigma$ the half-bubble in question and $\check{\Sigma}$ its double, Lemma \ref{lem:euler} implies that \textsl{either} $\Sigma$ and $\check{\Sigma}$ have the same number of ends (one) \textsl{or} $\check{\Sigma}$ will have two ends. By classical results of Schoen (see \cite{Sc83}) the first case happens if and only if $\check{\Sigma}$ is a plane, and the second if $\check{\Sigma}$ is a catenoid. The conclusion in the former case is straightforward, so let us discuss the latter. The only way $\Sigma$ has exactly one end (which we are assuming) is that it is cut with a plane that is parallel to its two ends. But then the only way such a cut gives rise to a free boundary minimal surface is when the plane in question passes through the center of the catenoid, which proves the claim.
        	\end{proof}	
        	
        	\begin{cor}\label{cor:genuszero}
        		A two-dimensional half-bubble whose Euler characteristic equals one is isometric to either a half-plane or to a vertically cut half-catenoid.
        	\end{cor}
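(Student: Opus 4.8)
The plan is to mirror the proof of Corollary \ref{cor:oneend}, replacing the hypothesis ``one end'' by the topological hypothesis $\chi(\Sigma)=1$: I would use Lemma \ref{lem:euler} to force the double $\check\Sigma$ to have genus zero, and then appeal to the classification of complete embedded minimal surfaces of genus zero and finite total curvature in $\R^3$.

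Concretely, first recall that $\check\Sigma$ is a complete, embedded minimal surface in $\R^3$ of finite total curvature (by \eqref{eq:half2nd}, since $\Sigma$ is a half-bubble); in particular it is orientable and, being a minimal surface without boundary in $\R^3$, it is non-compact, so that $\chi(\check\Sigma)=2-2\gamma(\check\Sigma)-b(\check\Sigma)\leq 1$. I would then invoke Lemma \ref{lem:euler}. Its case (1) would give $\chi(\check\Sigma)=2\chi(\Sigma)=2$, contradicting $\chi(\check\Sigma)\leq 1$; hence we are necessarily in case (2), where $b(\check\Sigma)=b(\Sigma)$ and $\chi(\check\Sigma)=2\chi(\Sigma)-b(\Sigma)=2-b(\check\Sigma)$. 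Comparing this with $\chi(\check\Sigma)=2-2\gamma(\check\Sigma)-b(\check\Sigma)$ yields $\gamma(\check\Sigma)=0$, i.e. the double is a complete, embedded, genus-zero minimal surface of finite total curvature.

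It then remains to identify $\check\Sigma$: by the López--Ros classification theorem, such a surface is either a plane or a catenoid. If $\check\Sigma$ is a plane, then, exactly as in the proof of Corollary \ref{cor:oneend} (and because a flat half-bubble must be a half-plane, cf. Corollary \ref{cor:stable}), $\Sigma$ is a half-plane. If $\check\Sigma$ is a catenoid, then $\Pi$ is one of its planes of reflective symmetry, i.e. either a plane containing its axis or the waist plane orthogonal to that axis; but being in case (2) of Lemma \ref{lem:euler} means that every end of $\check\Sigma$ meets $\Pi$, which the waist plane does not (each end of a catenoid lies, outside a compact set, on one side of it), so $\Pi$ must contain the axis and $\Sigma=\check\Sigma\cap\Pi_1$ is precisely a vertically cut half-catenoid. (This is consistent with the fact that the horizontally cut half-catenoid has $\chi=0$, hence is excluded by the hypothesis $\chi(\Sigma)=1$ from the outset.)

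The only genuinely external ingredient is the López--Ros theorem; everything else is the elementary topological bookkeeping of Lemma \ref{lem:euler} together with a short symmetry discussion, so I do not expect a real obstacle here. The one point to treat with some care is the degenerate alternative $\check\Sigma=\Pi$ in the ``plane'' case: this is ruled out because a half-bubble, by definition, has a non-empty free boundary meeting $\Pi$ orthogonally and transversally, which forces the underlying plane to be orthogonal to $\Pi$ rather than to coincide with it.
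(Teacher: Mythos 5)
Your argument is correct and follows essentially the same route as the paper's proof: rule out case (1) of Lemma \ref{lem:euler} because the Jorge--Meeks relation \eqref{eq:GBcomplete} forces $\chi(\check{\Sigma})\leq 1$, deduce in case (2) that $\check{\Sigma}$ has genus zero, invoke the L\'opez--Ros classification \cite{LR91}, and finish by examining the symmetry planes of the catenoid. Your write-up merely makes explicit two points the paper leaves implicit (that case (2) forces the cutting plane to contain the catenoid's axis, and that the plane case yields a half-plane), so there is nothing to object to.
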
	
        	
        	\begin{proof}
        		Like we did above, we need to consider two cases and apply the equations provided by Lemma \ref{lem:euler} for the Euler characteristics. If we are in case (1), the symmetrized bubble $\check{\Sigma}$ satisfies $\chi(\check{\Sigma})=2$ and this is not possible because the standard Jorge-Meeks formula for the Euler characteristic (cf. the second equality in \eqref{eq:GBcomplete}) forces $\chi(\check{\Sigma})\leq 1$. 
        		Instead, in case (2) we would get that the bubble $\check{\Sigma}$ has genus zero, which allows to invoke \cite{LR91}: $\check{\Sigma}$ is either a plane or a catenoid. At this stage, the conclusion comes at once by considering all possible planes of symmetry of a catenoid.
        	\end{proof}

        	 	\section{Bubbling analysis}\label{sec:bubbling}

        	 	In this section, we prove Theorem \ref{thm:bubbles} and in Section \ref{sec:neck} we complete the proof of Theorem \ref{thm:quant}. Before we begin either of these proofs notice that given $y\in \mathcal{Y}\setminus \partial N$ the bubbling and neck analysis in \cite{BS17} goes through precisely as before since it is of a local nature. In particular the conclusions of Theorems \ref{thm:quant} and \ref{thm:bubbles} hold over smooth domains $U\cemb N\setminus\partial N$ such that $\mathcal{Y}\cap \partial U =\emptyset$ and $y\in \mathcal{Y}\setminus \partial N$ respectively, which implies that the proofs of both of these theorems is complete in the case that $\mathcal{Y}\cap \partial N$ is empty. We need only focus on the setting $y\in \mathcal{Y}\cap \partial N$ from now on. 
        	 	
        	 	Our main result here is a blow-up theorem (Theorem \ref{thm:blowup} below) which will detect a non-trivial bubble or half-bubble in all regions of coalescing index at $\partial N$. 
        	 	This result is based on a localized version of the compactness result for free boundary minimal hypersurfaces from \cite{ACS17}. Once we have proved Theorem \ref{thm:blowup}, we can then follow a scheme related to the bubbling analysis for closed minimal hypersurfaces developed in Section 3 of \cite{BS17} to construct various point-scale sequences which will detect \emph{all} the non-trivial bubbles and half-bubbles that develop at points of curvature concentration on $\partial N$, yielding Theorem \ref{thm:bubbles}. 
        	 	
        	 	In order to obtain the quantization result in Theorem \ref{thm:quant} from our proof of Theorem \ref{thm:bubbles}, it remains to show that no curvature is lost in \emph{annular neck regions} between the bubble scales. This last step will be carried out in Section \ref{sec:neck}.\\

        	 	In order to state our results in a unified manner, we will use the following notation and conventions throughout. Consistently with Section \ref{sec:prelim} denote by $\Pi_1(a):=\{x^1\geq a\}$ a closed half-space of $\R^{n+1}$ \emph{containing the origin} and by $\Pi(a)$ its boundary (though we drop the dependence on $a$ when it is irrelevant).  We will consider 
        	 	a relatively open sub-domain $C^{\R^{n+1}}_\varrho(a)\subset \Pi_1$,  
        	 	defined by $C^{\R^{n+1}}_\varrho(a)=[a,a+\varrho)\times B^{\R^n}_\varrho (0)$. We will allow $\varrho=\infty$ in the sequel at which point the domain is simply $\Pi_1(a)$.  
        	 	Now equip $C^{\R^{n+1}}_\varrho$ with a smooth metric $g$ such that it is Euclidean at $(a,0,\dots,0)$ and for all $z\in C^{\R^{n+1}}_\varrho\cap \Pi$, $\partial_{x^1}(z)$ is the inward unit normal to $\Pi$ with respect to $g$. We say that $(C^{\R^{n+1}}_\varrho,g)$ is an \emph{adapted} domain and in the sequel we will allow the parameters $\varrho$ and $g$ to vary under these constraints, and $a$ will largely remain fixed.   

        	 	Such domains $(C^{\R^{n+1}}_\varrho,g)$ can be used to analyze local properties of free boundary minimal hypersurfaces in arbitrary $N$ without loss of generality via the use of
        	 	a \emph{Fermi-coordinate} neighborhood at the boundary of $N$: 
        	 	Let $-\nu$ be the inward pointing normal to $\partial N$ and $\gamma_1(N)>0$ be so that 
        	 	\begin{eqnarray*}
        	 		& F:[0,\gamma)\times \partial N \to N&\\
        	 		&(t,z)\mapsto Exp^N_z (-t\nu)&
        	 	\end{eqnarray*} 
        	 	is injective for $t\in [0,\gamma_1)$. 
        	 	
        	 	Consider a normal coordinate neighborhood of $q$ in $\partial N$, $\{x_i\}_{i=2}^{n+1}$. For ease of notation, we will now choose $\gamma_0(N) >0$ to be smaller than both $\gamma_1$ defined above, and the injectivity radius of $\partial N$ so that, first of all, these boundary normal coordinates are defined on $B^{\partial N}_\gamma (q)$ for any $q\in \partial N$ and $\gamma \leq \gamma_0$. Second of all, for all $q\in \partial N$ and $\gamma\leq \gamma_0$, we may now extend these to a Fermi-coordinate neighborhood $C_{\gamma}(q) \cong [a,a+\gamma)\times B^{\R^n}_\gamma (0)$ via the exponential map giving $x^{1} = a+ Exp^N_{(0,x^2,\dots,x^{n+1})} (-t\nu)$ for $0\leq t<\gamma$. 
        	 	
        	 	Notice that the metric $g$ in these coordinates coincides with the Euclidean metric at $(a,0,\dots,0)$. Moreover $\partial_{x^1}(z) = -\nu(z)$, in particular $g_{1j}(z)=0$, for all $z\in \partial N = \{x^1=a\}$ and $j\geq 2$. In particular $C_\gamma(q)\cong C^{\R^{n+1}}_\gamma(a)$ is an adapted domain.  \\
        	 	
        	 	For $V\subset (C^{\R^{n+1}}_\varrho,g)$ consider the set $\mathfrak{M}^{V}$ (which shall depend on both $C^{\R^{n+1}}_\varrho$ and the background metric $g$) of smooth, connected and properly embedded minimal hypersurfaces $P\subset V$, furthermore requiring that if $P\cap \Pi \neq \emptyset$ then $P$ is free boundary with respect to $\Pi$. At the level of regularity, we always tacitly assume $V$ to be the intersection of a \emph{smooth} domain in $\R^{n+1}$ with $\Pi_1$. Any variational properties of $P$ are computed with respect to compactly supported variations in $V$ -- i.e. free boundary variations on $V\cap \Pi$ as well as Dirichlet conditions on $\partial V\setminus \Pi$. Following our introduction, we define
        	 	\begin{equation*}
        	 	\mathfrak{M}^{V}_p(\Lambda,\mu) = \big\{ P \in \mathfrak{M}^{V} :  \forall x\in C^{\R^{n+1}}_\varrho, R>0,  \ \h^n(P\cap B^{\R^{n+1}}_R(x))\leq \Lambda R^n, \ \text{ and }\lambda_p(P) \geq -\mu \big\}.
        	 	\end{equation*}

        	 	We recall from \cite[Theorem 29]{ACS17} that if $2\leq n\leq 6$ and $\{M_k\}\subset \mathfrak{M}_p(\Lambda, \mu) \subset \mathfrak{M}$ then there exists a smooth, connected, compact embedded minimal hypersurface $M\subset N$ meeting $\partial N$ orthogonally along $\partial M$, $m\in \mathbb{N}$ and a finite set $\mathcal{Y}\subset M$ with cardinality $|\mathcal{Y}|\leq p-1$  such that, up to subsequence, $M_k \to M$ locally smoothly and graphically on $M\setminus \mathcal{Y}$ with multiplicity $m$. To avoid ambiguities, let us remark that from now on we will always assume that  $\mathcal{Y}$ is the \emph{minimal} such set, so that in particular the convergence is never smooth about any $y\in\mathcal{Y}$.
        	 	
        	 	We will require the following local version of that compactness theorem.
        	 	
        	 	\begin{lem}\label{lemma:local}
        	 		Let $2\leq n \leq 6$ and $\{(C_{k}, g_k)\}$ be a sequence of adapted domains where we set $C_k=C^{\R^{n+1}}_{\varrho_k}(a)$ with $\{\varrho_k\}$ monotonically increasing, and choose $\eps >0$ small enough that $B^{\R^{n+1}}_\eps(0)\cemb C_1$. We will assume that $g_k\to g$ smoothly for some limit metric $g$ on any relatively open subset $V\cemb C_k$. Suppose we have a sequence $P_k\in \mathfrak{M}^{C_k}_p(\Lambda,\mu)$ (for some fixed constants $\Lambda, \mu\in \R_{>0}$ and a positive integer $p$ independent of $k$) such that $P_k\cap B^{\R^{n+1}}_{\eps}(0)\neq \emptyset$.
        	 		Then, up to subsequence:

        	 		\begin{enumerate}
        	 			\item For any relatively open $V$ such that $B_\eps^{\R^{n+1}}(0)\subset V\cemb C_k$ for some $k$, there exists a smooth, connected, and embedded minimal hypersurface $P\subset V$ where $P_k \to P$ locally smoothly and graphically, with multiplicity $m\in \N$, for all $x\in P\setminus \mathcal{Y}$ where $\mathcal{Y}\subset P$ is a finite set with cardinality $|\mathcal{Y}|\leq p-1$. 
        	 			
        	 			 If $\partial P\neq\emptyset$, then $P$ meets $\Pi$ orthogonally along $\partial P$ and if $P \in \mathfrak{M}^V$, then $P \in \mathfrak{M}^V_p(\Lambda,\mu)$.
        	 			 
        	 			  Finally $\mathcal{Y}\neq \emptyset$ if and only if there exists $\{y_k\}\subset V$ with $y_k\to y\in P$, and $r_k\to 0$ so that $index(P_k\cap B^{\R^{n+1}}_{r_k}(y_k))\geq 1$. In either case we say that $y\in \mathcal{Y}$. 
        	 			
        	 			
        	 			
        	 			\item Assuming that $\varrho_k \to \infty$, then there exists a limit $M$ which is a smooth, embedded  minimal hypersurface in $(\Pi_1,g)$.   
        	 		If we additionally assume $\liminf_{k\to\infty}\lambda_p(P_k)\geq 0$ then  $P \in \mathfrak{M}^{\Pi_1}$ implies $P \in \mathfrak{M}^{\Pi_1}(\Lambda,p-1)$.
        	 			
        	 			Further assuming $g=g_0$ is the Euclidean metric then $P$ is either a bubble or a half-bubble; if $P$ is not properly embedded, then $P\equiv \Pi$. 
        	 			
        	 			Finally if $\mathcal{Y}\neq \emptyset$ then $P$ must be a plane in $\Pi_1$ or a half-plane orthogonal to $\Pi$. 
        	 			
        	 		\end{enumerate}
        	 		
        	 	\end{lem}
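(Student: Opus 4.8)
The plan is to transplant the proof of \cite[Theorem 29]{ACS17} to the adapted cylinders $C_k$, using the uniform density-ratio bound $\h^n(P_k\cap B^{\R^{n+1}}_R(x))\le\Lambda R^n$ in place of the global area bound and keeping track of the index exactly as there. For (1), I would fix a relatively open $V$ with $B^{\R^{n+1}}_\eps(0)\subset V\cemb C_k$ and run the usual dichotomy: a point $y\in\overline V$ fails to be a point of smooth convergence precisely when there are $y_k\to y$ and $r_k\to 0$ with $index(P_k\cap B^{\R^{n+1}}_{r_k}(y_k))\ge 1$, and away from such points $P_k$ is, in small enough balls and for $k$ large, a stable free boundary minimal hypersurface, so that the interior Schoen--Simon curvature estimates and their free boundary counterpart from \cite{ACS17} (both available since $2\le n\le 6$; cf. Remark \ref{rmk:higher-stable}) give uniform bounds on $|A_{P_k}|$ there. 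Allard's regularity theorem together with its free boundary version along $V\cap\Pi$ (applicable because $\partial_{x^1}$ is the inward $g_k$-unit normal to $\Pi$) and boundary Schauder estimates then upgrade the varifold subconvergence to smooth, graphical, finite-multiplicity subconvergence on $V\setminus\mathcal{Y}$ to a smooth connected embedded minimal $P\subset V$ meeting $\Pi$ orthogonally along $\partial P$ (orthogonality passing to the limit). A diagonal argument over an exhaustion of $C_k$, or of $\Pi_1$ once $\varrho_k\to\infty$ in case (2), then yields the first sentences of (1) and (2). To obtain $P\in\mathfrak{M}^V\Rightarrow P\in\mathfrak{M}^V_p(\Lambda,\mu)$ --- and in (2), $P\in\mathfrak{M}^{\Pi_1}\Rightarrow P\in\mathfrak{M}^{\Pi_1}(\Lambda,p-1)$ under $\liminf_k\lambda_p(P_k)\ge 0$ --- I would pass the density-ratio bound to the limit and prove the lower semicontinuity $\lambda_p(P)\ge\liminf_k\lambda_p(P_k)$ by transplanting a hypothetical $p$-dimensional subspace on which $Q_{|A_P|^2}$ is sufficiently negative from $P$ onto a single graphical sheet of $P_k$, after cutting off near the finite set $\mathcal{Y}$ (legitimate since points have zero $n$-capacity for $n\ge 2$); the non-compact case in (2) is made rigorous by the spectral framework of Subsection \ref{subs:general}, in particular Proposition \ref{pro:l2spec}.

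For the bound $|\mathcal{Y}|\le p-1$ and the `$\mathcal{Y}\ne\emptyset$ if and only if \dots' statement I would argue as in \cite[Theorem 29]{ACS17}: for each $y\in\mathcal{Y}$, rescaling the sequence $y_k,r_k$ above by $r_k$ and recentring produces --- again via the stable-region estimates, applied away from the curvature-concentration set of the rescaled surfaces, which is possible by the choice of $r_k$ --- a non-flat complete minimal hypersurface in $\R^{n+1}$ or in a half-space, hence one carrying a compactly supported function of strictly negative Jacobi energy; transplanting this function back to $P_k$, rescaling, and placing the distinct points of $\mathcal{Y}$ in pairwise disjoint neighbourhoods (possible as $\mathcal{Y}$ is discrete and $g_k\to g$, so distances are only mildly distorted) produces, for $k$ large, at least $|\mathcal{Y}|$ mutually $L^2$-orthogonal test functions on which $Q_{|A_{P_k}|^2}<-\mu\|\cdot\|^2$; hence $\lambda_{|\mathcal{Y}|}(P_k)<-\mu$ and so $|\mathcal{Y}|\le p-1$. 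The reverse implication is immediate from smoothness of the convergence off $\mathcal{Y}$.

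It then remains, in (2) with $g=g_0$, to analyse the structure of $P\subset\Pi_1$. By the above, $index(P)\le p-1<\infty$ and $P$ has Euclidean volume growth; hence, if $\partial P=\emptyset$, $P$ has finite total curvature --- by \cite{FC85} when $n=2$ and by \cite{T89} (invoking the volume-growth hypothesis) when $n\ge 3$ --- so $P$ is a bubble, while if $\partial P\ne\emptyset$, $P$ has finite total curvature by Proposition \ref{pro:equiv}, so $P$ is a half-bubble. If $P$ is not properly embedded, then $\mathring P\cap\Pi\ne\emptyset$, and at such an interior contact point $P$ is a minimal graph over $\Pi$ lying on the $\Pi_1$-side, so the strong maximum principle against the totally geodesic hyperplane $\Pi$ forces $P\equiv\Pi$. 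Finally, if $\mathcal{Y}\ne\emptyset$ the multiplicity must satisfy $m\ge 2$: were $m=1$, Allard's theorem (resp. its free boundary version) would show $P_k$ is a single $C^{1,\alpha}$-small graph over $P$ near every point and Schauder would upgrade the convergence to smooth everywhere, contradicting $\mathcal{Y}\ne\emptyset$. With $m\ge 2$ I would run the two-sheet argument: on $P\setminus\mathcal{Y}$ the renormalised difference of the two lowest graphs of $P_k$ over $P$ subconverges to a non-negative Jacobi field $w$ that is not identically zero (non-triviality from the normalisation on a compact exhaustion together with unique continuation --- two ordered minimal graphs cannot be tangent to infinite order); the strong maximum principle and the Hopf boundary lemma then give $w>0$ up to $\partial P$, so $P\setminus\mathcal{Y}$, and hence $P$ (finite sets being removable for the stability inequality when $n\ge 2$), is stable; a stable bubble is a hyperplane by the Bernstein theorem for stable minimal hypersurfaces in $\R^{n+1}$, $2\le n\le 6$, necessarily contained in $\Pi_1$, and a stable half-bubble is a half-plane orthogonal to $\Pi$ by Corollary \ref{cor:stable} and Remark \ref{rmk:higher-stable}.

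The hard part will be the index bookkeeping across scales. Showing that each point of $\mathcal{Y}$ really accounts for an eigenvalue of $P_k$ below $-\mu$ requires a careful disjointification of the supports of the transplanted bubble eigenfunctions and uniform control of the metric distortion $g_k\to g$ on the relevant shrinking neighbourhoods; dually, the lower semicontinuity $\lambda_p(P)\ge\liminf_k\lambda_p(P_k)$ needs the finitely many bad points to be negligible for the governing min--max problem, a capacity argument that is clean for $n\ge 2$ but must be phrased carefully in the non-compact setting of (2). The maximum-principle inputs of the last paragraph --- the non-triviality of the limiting Jacobi field and the removability of $\mathcal{Y}$ for the stability inequality --- are standard, but they are the other place where care is needed; everything else is a faithful localisation of the arguments of \cite{ACS17} and of the closed-case bubbling scheme of \cite{BS17}.
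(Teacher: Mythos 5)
Your proposal is correct and follows essentially the same route as the paper's proof: localize \cite[Theorem 29]{ACS17} verbatim for part (1), characterize $\mathcal{Y}$ via coalescing index using strict stability of $P$ on small balls plus the Schoen--Simon interior and free boundary curvature estimates, exhaust $\Pi_1$ diagonally for part (2), obtain the index bound by transplanting test functions as in \cite{ACS15}, deduce finite total curvature from Proposition \ref{pro:equiv} and the volume-growth hypothesis (with the maximum principle giving $P\equiv\Pi$ in the improper case), and, when $\mathcal{Y}\neq\emptyset$, use multiplicity $m\geq 2$ to build a positive Jacobi field, conclude stability via interior and Hopf boundary maximum principles, and invoke \cite{SS81} respectively Remark \ref{rmk:higher-stable} to identify $P$ as a (half-)plane. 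Your write-up is in fact more explicit than the paper's about the index bookkeeping and the semicontinuity of $\lambda_p$, which the paper delegates to the cited references.
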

        	 	
        	 	\begin{proof}
        	 		Part (1):        		 All the steps in the proof of \cite[Theorem 29]{ACS17} are local, hence can be adopted almost verbatim with only some cosmetic changes to conclude all but the final statement concerning the equivalent characterization of the condition $\mathcal{Y}\neq\emptyset$. To prove that assertion, let us first observe that since $P$ is smooth and $V$ is compact we can pick $r>0$ so that $P\cap B^{\R^{n+1}}_r(z)\cap V$ is \emph{strictly} stable for all $z\in P$. Thus, if $\mathcal{Y}=\emptyset$ then the convergence would be smooth everywhere, hence in particular for $k$ large enough each hypersurface $P_k$ would be stable in all balls of radius $r/2$ centered at points in $V$ (as defined above) by smooth convergence. Hence no sequence as in the statement can actually exist. Instead, for the converse implication one just needs to invoke the interior and free boundary curvature estimates of Schoen-Simon \cite{SS81} (cf. \cite[Theorem 19]{ACS17}).  
        	 		
        	 		\
        	 		
        	 		Part (2):
        	 		We can apply Part (1) to some compact exhaustion $\{V_{\ell}\}$ of $\Pi_1$ to conclude the first statement via a diagonal sequence argument.
        	 		
        	 		If $P$ is proper, i.e. $P \in \mathfrak{M}^{\Pi_1}$, then by Part (1) $P\in \mathfrak{M}^V_p(\Lambda, \mu)$ for \emph{all} $V \subset \Pi_1$. We now check that in this case in fact $index(P)\leq p-1$. Assuming the contrary guarantees the existence of a set $V$ with $\lambda_p(P\cap V)=\alpha<0$ and therefore, following the argument in \cite[p.~2599--2600]{ACS15}, we find that eventually $\lambda_p(P_k\cap V)\leq \frac{\alpha}{2}$, contradicting the assumption that $\liminf_{k\to\infty}\lambda_p(P_k)\geq 0$.  
        	 		
        	 		If $g=g_0$ is the Euclidean metric, then we distinguish two cases. If $P$ is proper, then exploiting the assumption of Euclidean volume growth, we conclude by Proposition \ref{pro:equiv} that it has finite total curvature. If on the other hand $P$ is not proper, i.e. an interior point of $P$ lies in $\Pi$, then by a maximum principle argument $P=\Pi$, and therefore $P$ trivially has finite total curvature. 
        	 		
        	 		Finally, if $\mathcal{Y}\neq \emptyset$ then we may as well assume that $P$ is properly embedded (if not then $P=\Pi$ and we are done). Since $P$ is properly embedded and $\Pi_1$ is simply connected we have that $P$ is two-sided. When we couple this with the lack of smooth convergence, we conclude that the convergence must be of multiplicity $m\geq 2$ (via Allard's interior and boundary regularity cf. \cite[Theorems 5, 17]{ACS17}) which allows for the construction of a positive Jacobi field over compact subsets $W\cemb \Pi_1$ (not necessarily vanishing at the boundary of $W$). In turn, this implies that $\lambda_1(P\cap W)\geq 0$ and the limit is stable over all such $W$ (here we use the standard argument to establish the positiveness of a first eigenfunction, noting that both an interior and boundary Hopf maximum principle will be required); hence $P$ is stable in $\Pi_1$. The conclusion follows by the usual classification of stable hypersurfaces with Euclidean volume growth when $P$ has no boundary \cite{SS81}, and Remark \ref{rmk:higher-stable} when $P$ has a free boundary on $\Pi$. 
        	 		\end{proof}

        	 	We can now establish the first main result of this section as a consequence of the above statement as well as the localized compactness theory developed in Section 2 of \cite{BS17} for the case of closed hypersurfaces. 
        	 	
        	 	 We have already seen in Lemma \ref{lemma:local} that bad points of convergence $y\in \mathcal{Y}$ correspond to coalescing regions of index, so these are the points that we analyze in detail here. We will denote by $B_r(p)$ a normal coordinate neighborhood of $p\in N$ and by $C_r(q)$ a Fermi-coordinate neighborhood of $q\in \partial N$.

        	 	\begin{thm}\label{thm:blowup}
        	 		Let $\{M_k\}\subset \mathfrak{M}_p(\Lambda, \mu) \subset \mathfrak{M}$ so that (up to subsequence) $M_k\to M$ for $M$ as in \cite[Theorem 29]{ACS17} and choose $\delta>0$ so that 
        	 		\begin{equation*}
        	 		2\delta<\min\left\{\inf_{ y_i\neq y_j\in \mathcal{Y}} dist_g(y_i,y_j),\,\,\, inj_N,\,\,\, \gamma_0\right\}.
        	 		\end{equation*} Assume the existence of sequences $\{\varrho_k\},\{r_k\}\subset \R_{>0}$ satisfying $r_k\to 0$, $\varrho_k\leq \delta$, $\varrho_k/r_k\to \infty$ and $\{p_k\in M_k\}$ with $p_k\to y\in \partial N$. We assume furthermore that $P_k$ is some connected component of $M_k\cap B_{\varrho_k}(p_k)$ and it satisfies
        	 		\begin{itemize}
        	 			\item $P_k\cap B_{r_k + (r_k)^2}(p_k)$ is unstable for all $k$,
        	 			\item $P_k\cap B_{r_k/2}(z)$ is stable for all $z\in P_k\cap B_{\varrho_k}(p_k)$. 
        	 		\end{itemize}
        	 		
        	 		After possibly passing to a further subsequence, we are in one of the following cases: \\
        	 		
        	 		\framebox[1.8cm]{Case I:} $\frac{dist_g(p_k,\partial N)}{r_k}\to\infty$. Choose normal coordinates for $N$ centered at $p_k$ on the geodesic balls $B_{\varrho_k}(p_k)$ and consider the rescaled hypersurfaces $\widetilde{P}_k \subset B^{\R^{n+1}}_{\varrho_k/r_k}(0)$ defined by
        	 		\begin{equation*}
        	 		\widetilde{P}_k  : = \frac{1}{r_k} \big(P_k\cap B_{\varrho_k}(p_k)- p_k\big).
        	 		\end{equation*}		 Then $\widetilde{P}_k$ must converge smoothly on any compact set with multiplicity one to a non-trivial bubble $\Sigma\subset \R^{n+1}$ with $\Sigma\cap B^{\R^{n+1}}_{3/2}(0)$ unstable. \\

        	 		\framebox[1.8cm]{Case II:} $\frac{dist_g(p_k,\partial N)}{r_k}\to c\geq 0$. Let $q_k\in\partial N$ be so that $dist_g(q_k,p_k)=dist_g(p_k,\partial N)$ and $s_k$ a point on the geodesic connecting $q_k$ to $p_k$ which is a distance $cr_k$ from $q_k$. Choose a Fermi-coordinate neighborhood (so that $a=-c$) $C_{\varrho_k}(q_k)$ and consider the rescaled hypersurfaces $\widetilde{P}_k \subset C^{\R^{n+1}}_{\varrho_k/r_k}(-c)$ defined by
        	 		\begin{equation*}
        	 		\widetilde{P}_k  : = \frac{1}{r_k} \big(P_k\cap C_{\varrho_k}(q_k)- s_k\big).
        	 		\end{equation*}   In this case $\widetilde{P}_k$ must converge smoothly on any compact set with multiplicity one to \emph{either a full or half-bubble} $\Sigma\subset \Pi_1(-c)$, non-trivial in both cases, with $\Sigma\cap B^{\R^{n+1}}_{3/2}(0)$ unstable. 
        	 		In the case that $n=2$, $\Sigma$ must be a non-trivial half-bubble. \\ 	
        	 		Moreover, in each of the above cases, we must have that $\lambda_1(P_k \cap (B_{2r_k}(p_k)))\to -\infty$. \\

        	 		Finally, suppose that there is another point-scale sequence $(\widehat{p}_k, \widehat{r}_k)$ with $\widehat{r}_k \geq r_k$ for all $k$ and such that
        	 		\begin{itemize}
        	 			\item $P_k\cap (B_{\widehat{r}_k+ (\widehat{r}_k)^2}(\widehat{p}_k)\setminus B_{2r_k}(p_k))$ is unstable for all $k$,
        	 			\item there exists some $C<\infty$ with $B_{\widehat{r}_k}(\widehat{p}_k)\subset B_{Cr_k}(p_k)$ for all $k$,
        	 		\end{itemize}
        	 		then we also have $\lambda_1(P_k\cap (B_{2\widehat{r}_k}(\widehat{p}_k)\setminus B_{2r_k}(p_k))) \to -\infty.$
        	 	\end{thm}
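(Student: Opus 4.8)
The plan is to argue by contradiction, blowing up at the scale $r_k$ of the first bubble, transplanting first eigenfunctions across the smooth multiplicity-one convergence already established in Case I and Case II, and then contradicting the strict domain monotonicity of the first Dirichlet eigenvalue of the Jacobi operator on the limit $\Sigma$. Suppose $\lambda_1(P_k\cap(B_{2\widehat r_k}(\widehat p_k)\setminus B_{2r_k}(p_k)))\ge -C_0$ along a subsequence, for some fixed $C_0<\infty$. First one notes that the two listed conditions force $\widehat r_k\asymp r_k$: since $B_{\widehat r_k}(\widehat p_k)\subset B_{Cr_k}(p_k)$ and the minimizing geodesic from $p_k$ through $\widehat p_k$ realises the distance at this scale, $dist_g(\widehat p_k,p_k)+\widehat r_k\le Cr_k$, so with $\widehat r_k\ge r_k$ one gets $r_k\le\widehat r_k\le Cr_k$ and $dist_g(\widehat p_k,p_k)\le(C-1)r_k$. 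Passing to a further subsequence, $\widehat r_k/r_k\to\rho\in[1,C]$ and the rescaled point $\tfrac{1}{r_k}(\widehat p_k-p_k)$ converges to some $\widehat a$ in the blow-up coordinates of Case I (respectively, the analogous recentred point converges in Case II); in either case the blow-up is centred so that $\tfrac{1}{r_k}B_{2r_k}(p_k)$ converges to $B^{\R^{n+1}}_2(0)$.

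Blow up using the rescalings $\widetilde P_k$ of Case I (respectively Case II), so $\widetilde P_k\to\Sigma$ locally smoothly with multiplicity one, $\Sigma$ a non-trivial bubble or half-bubble. Set
\[
W:=\big(B^{\R^{n+1}}_{2\rho}(\widehat a)\setminus\overline{B^{\R^{n+1}}_2(0)}\big)\cap\Pi_1,\qquad W_0:=\big(B^{\R^{n+1}}_{\rho}(\widehat a)\setminus\overline{B^{\R^{n+1}}_2(0)}\big)\cap\Pi_1,
\]
with the intersection with $\Pi_1$ vacuous in Case I; the two regions excise the same inner ball while $W_0$ carries the strictly smaller outer ball, so $\overline{W_0}\subsetneq W$, and $\tfrac{1}{r_k}(B_{2\widehat r_k}(\widehat p_k)\setminus B_{2r_k}(p_k))\to W$, $\tfrac{1}{r_k}(B_{\widehat r_k+\widehat r_k^2}(\widehat p_k)\setminus B_{2r_k}(p_k))\to W_0$ in the local Hausdorff sense. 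Since the Jacobi operator rescales by $r_k^{-2}$ one has $\lambda_1(\widetilde P_k\cap\tfrac{1}{r_k}U)=r_k^2\lambda_1(P_k\cap U)$ for every $U$, so $\lambda_1(\widetilde P_k\cap\tfrac{1}{r_k}(B_{2\widehat r_k}(\widehat p_k)\setminus B_{2r_k}(p_k)))\ge -r_k^2C_0\to 0$; as a compactly supported test function on $\Sigma\cap W$ with strictly negative Rayleigh quotient would push forward onto $\widetilde P_k$ with Rayleigh quotient bounded below away from $0$, this forces $\lambda_1(\Sigma\cap W)\ge 0$. On the other hand $\widetilde P_k$ is unstable on $\tfrac{1}{r_k}(B_{\widehat r_k+\widehat r_k^2}(\widehat p_k)\setminus B_{2r_k}(p_k))$, and the associated normalized first eigenfunctions have uniformly bounded Dirichlet energy (the second hypothesis on $P_k$ gives $|A_{P_k}|\le C/r_k$ on $B_{\varrho_k/2}(p_k)$ by the Schoen-Simon estimates, hence $|A_{\widetilde P_k}|$ bounded on the relevant region); a weak $W^{1,2}$-limit on $\Sigma$ therefore exists, is non-zero by strong $L^2$-convergence on the bounded model region, is supported in $\overline{W_0}$, and has non-positive Jacobi quotient by weak lower semicontinuity of the Dirichlet term together with strong $L^2$-convergence of the potential term, and letting a small outward enlargement of $W_0$ shrink back gives $\lambda_1(\Sigma\cap W_0)\le 0$.

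Domain monotonicity and $\overline{W_0}\subsetneq W$ now give $0\le\lambda_1(\Sigma\cap W)\le\lambda_1(\Sigma\cap W_0)\le 0$, so both vanish. Since $\Sigma$ is connected, non-trivial and properly embedded, hence non-compact, no component of $\Sigma\cap W$ is relatively compact in $W$; one then argues that the component realising $\lambda_1(\Sigma\cap W)=0$ reaches the outer sphere $\partial B^{\R^{n+1}}_{2\rho}(\widehat a)$ — and so meets the non-empty open shell $W\setminus\overline{W_0}$ — using connectedness and properness of $\Sigma$ together with the already established divergence $\lambda_1(P_k\cap B_{2r_k}(p_k))\to-\infty$, which prevents a competing near-zero mode of $\Sigma$ from being trapped in $\overline{W_0}$. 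Strict domain monotonicity of the first Dirichlet eigenvalue — a first eigenfunction of a connected domain cannot vanish on an open subset — then yields $\lambda_1(\Sigma\cap W)<\lambda_1(\Sigma\cap W_0)$, a contradiction. Case II runs identically, with the domains taken inside $\Pi_1$ and the free boundary condition on $\Pi$ built into the Jacobi form. I expect the main obstacle to be precisely the component bookkeeping just sketched: ensuring that the component of $\Sigma\cap W$ with vanishing first eigenvalue genuinely extends past the outer boundary of $W_0$, rather than staying confined to the region where $P_k$ was already known to be unstable.
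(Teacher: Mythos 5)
Your proposal addresses only the \emph{final clause} of the theorem (the statement about the second point-scale sequence $(\widehat p_k,\widehat r_k)$), treating the rest — the Case I/Case II dichotomy, the smooth multiplicity-one graphical convergence to a limit $\Sigma$, the non-triviality of $\Sigma$, the half-bubble conclusion when $n=2$, and the first assertion $\lambda_1(P_k\cap B_{2r_k}(p_k))\to-\infty$ — as already established. The paper does prove all of these: Case I is delegated to the bubbling analysis in \cite{BS17} (Corollary 2.6 there), while in Case II the paper verifies the hypotheses of Lemma~\ref{lemma:local}, shows $\mathcal{Y}=\emptyset$ from the $r_k/2$-stability assumption, and then rules out $\Sigma=\Pi$ or a strictly stable limit by observing that strict stability on $B_{4/3}(0)$ would, via smooth multiplicity-one convergence, make $P_k$ stable on $B_{r_k+r_k^2}(p_k)$, contradicting the hypothesis. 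So your proof as written is not a self-contained proof of Theorem~\ref{thm:blowup}; it only covers the last clause.

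For that clause, your strategy and the paper's are closely parallel: blow up at scale $r_k$, recentre at $p_k$ (the paper uses Remark~\ref{rmk:coord} to work in normal rather than Fermi coordinates, which avoids the recentering issues you have to handle by hand), note $\widehat r_k/r_k\to\widetilde r\in[1,C]$ and $\widehat p_k\to\widetilde p$ in the rescaled chart, and invoke a strict domain monotonicity of the first Jacobi eigenvalue of $\Sigma$ across two nested annuli with the same inner sphere $\partial B_2(0)$. The key difference is in how the contradiction is assembled. The paper runs the argument forward: \emph{if} $\Sigma$ were stable on $B_{2\widetilde r}(\widetilde p)\setminus B_2(0)$, then by strict domain monotonicity it would be \emph{strictly} stable on $B_{3\widetilde r/2}(\widetilde p)\setminus B_2(0)$, and smooth multiplicity-one convergence would then transfer strict stability to $\widetilde P_k$ there for $k$ large, contradicting the assumed instability of $P_k$ on $B_{\widehat r_k+\widehat r_k^2}(\widehat p_k)\setminus B_{2r_k}(p_k)$ (which rescales into that annulus). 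This never requires extracting a weak limit of eigenfunctions or establishing that $\lambda_1(\Sigma\cap W_0)\le 0$. Your route, by contrast, proves both $\lambda_1(\Sigma\cap W)\ge 0$ and $\lambda_1(\Sigma\cap W_0)\le 0$, the latter via transplanted normalised first eigenfunctions, a weak $W^{1,2}$-limit, non-vanishing via Rellich, and weak lower semicontinuity of the Jacobi form on varying domains. This is valid in spirit, but it is considerably more technical than what the paper does, and each of those steps (transplanting between $\widetilde P_k$ and $\Sigma$, non-vanishing of the limit, the enlarge-and-shrink manoeuvre to land in $W^{1,2}_0(W_0)$) would need to be spelled out carefully. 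It is not, however, wrong; it is an unnecessarily long way to the same conclusion.

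Two specific issues in your write-up. First, the claim $\overline{W_0}\subsetneq W$ is false: both $W$ and $W_0$ exclude the closed ball $\overline{B_2(0)}$, so $\partial B_2(0)$ lies in $\overline{W_0}$ but not in $W$. What you actually need (and use) is just $W_0\subsetneq W$ as open subsets of $\Pi_1$, with $W\setminus\overline{W_0}$ having nonempty interior; that is correct, so the error is cosmetic, but it should be repaired. Second, and more substantively, the connectedness/component bookkeeping you flag as the ``main obstacle'' is a genuine subtlety that your sketch does not actually resolve: the phrase ``using connectedness and properness of $\Sigma$ together with the already established divergence $\lambda_1(P_k\cap B_{2r_k}(p_k))\to-\infty$, which prevents a competing near-zero mode of $\Sigma$ from being trapped in $\overline{W_0}$'' is not an argument, since that divergence concerns the disjoint region inside $B_2(0)$ and does not immediately constrain a component of $\Sigma\cap W$ that touches only the inner sphere. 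To be fair, the paper's own ``this implies \ldots\ strictly stable'' asserts exactly the same strict domain monotonicity on nested annuli without elaboration; but since this is the crux, a complete proof along your lines would need to actually rule out a component of $\Sigma\cap W$ contained in $\overline{W_0}$ and touching $\partial B_2(0)$ with $\lambda_1=0$ (e.g.\ by using the already-established instability of $\Sigma$ inside $B_2(0)$ together with the connectedness of $\Sigma$), rather than merely flag the issue.
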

        	 	
        	 	\begin{rmk}\label{rmk:coord}
        	 		Suppose that we are in the setting of Case II, and consider $B_{\varrho_k}(p_k)\cap C_{\varrho_k}(q_k)$. Letting $\Phi_k : B_{\varrho_k}(p_k) \to B^{\R^{n+1}}_{\varrho_k/r_k}(0)$ and $\Psi_k : C_{\varrho_k}(q_k)\to C^{\R^{n+1}}_{\varrho_k/r_k}(-c)$ denote the blown-up normal and Fermi-coordinates respectively, one can easily check that 
        	 		$$\Phi_k\circ \Psi_k^{-1}:  \Psi_k(B_{\varrho_k}(p_k)\cap C_{\varrho_k}(q_k)) \to \Phi_k(B_{\varrho_k}(p_k)\cap C_{\varrho_k}(q_k))$$
        	 		converges locally smoothly on $\Pi_1$ to the identity as $k\to \infty$. Therefore the resulting blow-up and conclusions in Case II can actually be taken exactly as in Case I with respect to normal coordinates centered at $p_k$, or indeed as stated in the Theorem. 
        	 	\end{rmk}
        	 	
        	 	\begin{rmk}\label{rmk:equivballs}
        	 			Referring to the statement above, we further note that in these coordinates geodesic balls in $N$, $B_r(p)$, are directly comparable to Euclidean balls: for any $T>0$ there exists a sequence $\left\{\beta_k(T)\right\}$ with $\beta_k(T) \searrow 1$ so that for all $p\in B_{Tr_k}(s_k)$, if we denote $\widetilde{p}\in C^{\R^{n+1}}_{T}(-c)$ the point corresponding to $p$ in these rescaled coordinates then, by abusing notation,  
        	 			$$B^{\R^{n+1}}_{r\beta_k^{-1}}(\widetilde{p}) \subset B_r(p)\subset B^{\R^{n+1}}_{r \beta_k}(\widetilde{p}).$$
        	 		 This equivalence will be exploited along the course of the following proof when transferring certain variational properties (typically: eigenvalue bounds) back and forth between geodesic balls and coordinate balls. As it is well-known, the same equivalence holds true in geodesic normal coordinates as well.
        	 	\end{rmk}

        	 	\begin{proof}
        	 	
        	 		Case I: We can follow the arguments exactly as in \cite[Corollary 2.6]{BS17} to conclude all the statements of the theorem. \\

        	 		Case II: 
        	 		In these Fermi-coordinates, we are now in a position to apply Lemma \ref{lemma:local} and it remains to check that the limit $P$ is in fact a non-trivial (namely: it is not flat) and that the final statement of the theorem, concerning the point-scale sequence $(\widehat{p}_k,\widehat{r}_k)$, also holds.

        	 		Concerning the first assertion, observe that by Lemma \ref{lemma:local} it suffices to check that $P$ is not stable. 
        	 		We have $\mathcal{Y}=\emptyset$ since $\widetilde{P}_k$ is stable on all balls of radius $1/4$ (by virtue of our assumption), hence $\widetilde{P}_k$ converges smoothly and graphically on every compact subset of $\Pi_1$ to a connected minimal hypersurface $P$ and if $\partial P\neq\emptyset$ then $P$ meets $\Pi$ orthogonally. Furthermore either $P$ is properly embedded or $P=\Pi$.  
        	 		As the ambient space is simply connected, we can always conclude that $P$ is two-sided and that this convergence happens with multiplicity one.

        	 	Towards a contradiction, suppose that either $P=\Pi$ or
        	 	$$\lambda_1(P\cap B^{\R^{n+1}}_{3/2}(0))\geq 0$$ so that in particular $P$ is strictly stable on $B^{\R^{n+1}}_{4/3}(0)$. Notice that if $P=\Pi$ then the above is trivially true for variations in the whole of $\R^{n+1}$ (i.e. not just in $\Pi_1$). In either case, by smooth multiplicity one convergence we have that $\widetilde{P}_k$ is strictly stable on $B^{\R^{n+1}}_{4/3}(0)$ which implies (scaling back and using the second part of Remark \ref{rmk:coord}) that $P_k$ is stable on $B_{r_k + r_k^2}(p_k)$, a contradiction. In particular $P$ cannot be planar, and we have 
        	 	$$\lambda_1(P\cap B^{\R^{n+1}}_{3/2}(0))\leq -2\lambda_{\ast} < 0$$ for some $\lambda_{\ast}>0$, and hence for all $k$ sufficiently large
        	 	
        	 		$$\lambda_1(\widetilde{P}_k\cap B^{\R^{n+1}}_{3/2}(0))\leq -\lambda_{\ast}<0.$$ 
        	 		Hence a rescaling argument implies $$\lambda_1(P_k \cap (B_{2r_k}(p_k)))\to -\infty.$$ 
        	 		
        	 		Obviously, the fact that for $n=2$ we must obtain a half-bubble relies on the half-space theorem \cite{HM90} (i.e. there cannot be a non-trivial bubble contained in an open half-space.)
        	 		
        	 		Finally, the argument for the last part is similar to the above except we always choose normal coordinates to carry out our blow-ups using Remark \ref{rmk:coord}. At the $r_k$ scale we have that $B_{\widehat{r}_k}(\widehat{p}_k)$ is similar to the balls 
        	 		$$B^{\R^{n+1}}_{\beta_k^{\pm 1}\widetilde{r}_k}(\widetilde{p}_k)$$ with $1\leq \widetilde{r}_k \leq C$ and $\widetilde{p}_k\in B^{\R^{n+1}}_C(0)$, at which point we can conclude that $\widetilde{r}_k \to \widetilde{r}\in [1,C]$ and $\widetilde{p}_k \to \widetilde{p}\in B^{\R^{n+1}}_C(0)$.    
        	 		Assuming that $(B^{\R^{n+1}}_{2\widetilde{r}}(\widetilde{p})\setminus B^{\R^{n+1}}_2(0))\cap P$ is stable (or empty), we arrive at a contradiction in a similar fashion as above: first this implies that $B_{3\widetilde{r}/2}^{\R^{n+1}}(\widetilde{p})\setminus B_2^{\R^{n+1}}(0)\cap P$ is strictly stable (or empty). But this domain contains the blown-up initial domain on which we are assuming that $P_k$ is unstable. Thus this must be non-empty, and by the smooth multiplicity one convergence we would obtain that $(B_{3\widetilde{r}/2}^{\R^{n+1}}(\widetilde{p})\setminus B_2^{\R^{n+1}}(0))\cap \widetilde{P}_k$ is strictly stable, a contradiction. 
        	 		
        	 		Thus, we have proved that $$\lambda_1 ((B^{\R^{n+1}}_{3\widetilde{r}/2}(\widetilde{p})\setminus B^{\R^{n+1}}_2(0))\cap P)\leq -2\lambda_{\ast}<0 $$ as before, for some $\lambda_{\ast}>0$. A rescaling argument concludes the final statement of the Theorem, once again appealing to Remark \ref{rmk:equivballs}.         
        	 \end{proof}

        	 	The previous result can be regarded as a first step towards a proof of the Theorems \ref{thm:quant} and \ref{thm:bubbles}. Indeed, in the given setup, assume that $\delta$ is sufficiently small so that
        	 	\begin{equation*}
        	 	2\delta<\min\left\{\inf_{ y_i\neq y_j\in \mathcal{Y}} d_g(y_i,y_j),\,\,\, inj_N,\,\,\,\gamma_0\right\}.
        	 	\end{equation*}
        	 	
        	 	We know that the first part of Theorem \ref{thm:quant} (namely: the smooth graphical convergence with multiplicity $m$ away from the finite set $\mathcal{Y}$), holds true by \cite{ACS17} and therefore, we patently derive
        	 	\begin{equation*}
        	 	\int_{M_k\setminus \left(\bigcup_{y_i \in \mathcal{Y}} B_{\delta}(y_i)\right)} |A_k|^n d \h^n \to m \int_{M\setminus \left(\bigcup_{y_i \in \mathcal{Y}} B_{\delta}(y_i)\right)} |A|^n d \h^n.
        	 	\end{equation*}
        	 	Furthermore, as remarked at the beginning of this section, the full result of Theorem \ref{thm:bubbles} holds for any $y\in \mathcal{Y}\setminus\partial N$, and if $U\cemb N\setminus \partial N$ is a smooth domain with $\mathcal{Y}\cap \partial U=\emptyset$ then the bubbling analysis carried out in \cite{BS17} yields 
        	 	\begin{align*}
        	 	\mathcal{A}(M_k\cap U) &\to m\mathcal{A}(M\cap U) + \sum_{j=1}^{J_U} \mathcal{A}(\Sigma_j), \ \ \ (k\to\infty)
        	 	\end{align*}
        	 	where $J_U$ denotes the (non-trivial) full bubbles forming at point in $\mathcal{Y}\cap U$. 
        	 	
        	 	Putting these two threads together, we obtain in fact
        	 	\begin{equation}\label{eq.mainlimit}
        	 	\lim_{\delta\to 0}\lim_{k\to\infty}\int_{M_k\setminus \left(\bigcup_{y_i \in \mathcal{Y}\cap \partial N} B_{\delta}(y_i)\right)} |A_k|^n d \h^n = m\  \mathcal{A}(M)+ \sum_{j=1}^{J_N} \mathcal{A}(\Sigma_j)
        	 	\end{equation}
        	 	where $J_N$ denotes the (non-trivial) full bubbles forming at $y_i\in N\setminus\partial N$. 
        	 	It remains to understand what is happening on the small balls $B_\delta (y_i)$ as $\delta \to 0$ and for $y_i\in \partial N$. In order to study the behaviour here, we extract various point-scale sequences and look at their blow-up limits using Theorem \ref{thm:blowup}. During this process, we may iteratively pass to subsequences, but for the sake of simplicity we will not always state this explicitly. The important point here is that there will only be finitely many steps where this happens, so no diagonal argument is needed.  \\

        	 	For fixed $y\in \mathcal{Y}\cap \partial N$ consider the intersection $M_k\cap B_\delta(y)$: it is possible that this consists of more than one component but for $k$ sufficiently large there are at most $m$. Note that by the choice of $y$ we must have that $M_k \cap B_{r}(y)$ is unstable for all fixed $r>0$ and $k$ large enough. The rough plan from here is to extract point-scale sequences $(p_k, r_k)$ with $p_k\to y$ and $r_k\to 0$, and so that $\lambda_1(M_k \cap (B_{2r_k}(p_k)))\to -\infty$ for any such sequence. A bubbling argument as in \cite{BS17} will tell us that we can capture all the coalescing index in this way and that the process stops after at most $p-1$ such point-scale sequences were constructed.
        	 	This is the moral of Theorem \ref{thm:bubbles}, which we are indeed about to prove.

        	 	\begin{proof}[Proof of Theorem \ref{thm:bubbles}]
        	 		We prove the result for some fixed $y\in\mathcal{Y}\cap \partial N$ by constructing point-scale sequences as follows. Clearly we can repeat the steps precisely as below for each such $y$ so we do not concern ourselves with this.\\
        	 		
        	 		\textbf{The first point-scale sequence:} Let
        	 		\begin{equation*}
        	 		r^1_k = \inf \big\{r>0 \ : \ M_k\cap B_r(p) \text{ is unstable for some } p\in B_\delta(y)\cap M_k \big\}.
        	 		\end{equation*}
        	 		Note that $r^1_k$ defined above is strictly positive, and we can pick $p^1_k\in B_\delta(y)\cap M_k$ so that $M_k\cap B_{r^1_k+(r^1_k)^2}(p^1_k)$ is unstable. Notice that $M_k\cap B_{r^1_k/2}(z)$ is stable for all $z\in M_k\cap B_\delta (y)$ by definition.        		
        	 		We must have $r^1_k\to 0$ and $p^1_k\to y$ by the characterization of $\mathcal{Y}$ given in Lemma \ref{lemma:local}.  
        	 		
        	 		Based on these facts we have that:
        	 		\begin{enumerate}
        	 		\item[a)]{for every $k$ there exists at least one connected component of the intersection $M_k\cap B_{\delta/2}(p^1_k)$ satisfying the hypotheses of Theorem \ref{thm:blowup} with $\varrho_k = \delta/2$ and $r_k=r^1_k$. Thus we can perform a blow-up using normal coordinates centered at $p^1_k$, namely 
        	 			\begin{equation}\label{eq:blowup1}
        	 			\widetilde{M}^1_k \cap B_{\delta/r^1_k}(0) : = \frac{1}{r^1_k} \big(M_k\cap B_\delta(p^1_k)- p^1_k\big) 
        	 			\end{equation}
        	 			and such a sequence of connected components 
        	 			smoothly converges on every compact set with multiplicity one to a limit $\Sigma^1_1$ which is a non-trivial bubble or half-bubble and intersects the ball of radius two about the origin. In Case I of Theorem \ref{thm:blowup} the convergence happens locally smoothly on $\R^{n+1}$ and $\Sigma^1_1$ is always a non-trivial full bubble, while in Case II of Theorem \ref{thm:blowup} the convergence happens locally smoothly on some half-space and $\Sigma^1_1$ is either a bubble or half-bubble, non-trivial in either case.}
        	 		\item[b)]{for every $k$ all (other) connected components are still stable on all balls centered at any point $z\in M_k\cap B_{\delta/2}(p^1_k)$ and of radius $r^1_k/2$ so we can directly invoke Lemma \ref{lemma:local} which ensures smooth convergence with multiplicity one\footnote{When $n\geq 3$ there may be hyperplanes appearing as smooth limits here, but there is always at least one non-trivial (half-)bubble.} when we rescale according to equation \eqref{eq:blowup1}.}	
        	 		\end{enumerate}	
        	 		
        	  Let us denote by $\left\{\Sigma^1_i\right\}_{i\in I(1)}$ the finite collection of limit hypersurfaces in $\R^{n+1}$ we construct in this fashion. In particular, we obtain
        	 		\begin{equation}\label{eq.firstbubble}
        	 		\begin{aligned}
        	 		\lim_{R\to \infty} \lim_{k\to \infty} \int_{M_k\cap B_{Rr^1_k}(p^1_k)} |A_k|^n d \h^n &= \lim_{R\to \infty} \lim_{k\to \infty} \int_{\widetilde{M}^1_k\cap B^{\R^{n+1}}_{R}(0)} |\widetilde{A}^1_k|^n d \h^n\\
        	 		&= \lim_{R\to \infty}\sum_i \int_{\Sigma^1_i \cap B^{\R^{n+1}}_R(0)} |A|^n d \h^n \\
        	 		&= \sum_i\mathcal{A}(\Sigma^1_i).
        	 		\end{aligned}
        	 		\end{equation}
        	 		Theorem \ref{thm:blowup} also implies that $\lambda_1(M_k \cap (B_{2r_k}(p_k)))\to -\infty$. Using the half-space theorem \cite{HM90}, we further obtain that there is only one limit $\Sigma^1$ in dimension $n=2$, and that it must be a half-bubble in Case II. \\
        	 		
        	 		\textbf{The second point-scale sequence:} 
        	 		\begin{align*}
        	 		\mathfrak{C}^2_k:=\Big\{&B_r(p) \ : \ M_k\cap (B_r(p)\setminus B_{2r^1_k}(p^1_k)) \,\,\text{is unstable and $p\in B_\delta (y)\cap M_k$. Furthermore} \\
        	 		&\text{if $B_{2r}(p)\cap B_{2r^1_k}(p^1_k)\neq\emptyset$ then the connected component $Q$ of $B_{10r}(p)\cap M_k$}\\ 
        	 		&\text{containing $p$ is disjoint from $B_{2r^1_k}(p^1_k)$ and $Q\cap B_{r}(p)$ is itself unstable.}\Big\}
        	 		\end{align*}
        	 		
        	 		If $\mathfrak{C}^2_k = \emptyset$, regardless of the existence of further possible regions of coalescing index, we stop and leave it to the reader to check that in this case no further non-trivial bubbles can be found at $y$ (we advise the reader to check this upon a second reading). We may pass directly to the neck analysis at this point. 
        	 		
        	 		On the other hand, if $\mathfrak{C}^2_k \neq \emptyset$, we can set
        	 		\begin{eqnarray*}
        	 			r^2_k = \inf \Big\{r>0 \ : \  B_r(p)\in \mathfrak{C}^2_k\Big\}\end{eqnarray*}
        	 		and then, straight from the definition, we obtain $r^2_k\geq r^1_k$ 
        	 		 and we can find points $p^2_k\in B_\delta(y)\cap M_k$ such that $B_{r^2_k+(r^2_k)^2}(p^2_k)\in \mathfrak{C}^2_k$ and $M_k\cap (B_{r^2_k+(r^2_k)^2}(p^2_k)\setminus B_{2r^1_k}(p^1_k))$ is unstable. If $r^2_k$ does not converge to zero as one lets $k\to\infty$, then we stop the construction of point-scale sequences at $y$.
        	 		If instead $r^2_k\to 0$, then we ask whether or not 
        	 		\begin{equation}\label{eq.psinf}
        	 		\limsup_{k\to \infty}  \bigg(\frac{r^2_k}{r^1_k} + \frac{dist_g(p_k^1,p_k^2)}{r^2_k} \bigg)= \infty.
        	 		\end{equation}
        	 		If the answer to this question is negative (that is, if such a quantity stays bounded as $k\to\infty$) then there exists $C<\infty$ such that $B_{r^2_k}(p^2_k)\subset B_{Cr^1_k}(p^1_k)$ and $r^2_k\leq C r^1_k$ for all $k$. Thus we ignore this point-scale sequence since its blow-up limit is the same as for the previous point-scale sequence. We do however still keep track of the regions $B_{2r^2_k}(p^2_k)$ in order to find the next point-scale sequence and observe that, by appealing to the last part of Theorem \ref{thm:blowup} 
        	 		$$\lambda_1(M_k\cap (B_{2r^2_k}(p^2_k)\setminus B_{2r^1_k}(p^1_k))) \to -\infty.$$

        	 		If on the other hand \eqref{eq.psinf} holds, after passing to a subsequence for which the $\limsup$ is actually a limit, we distinguish two cases:\\
        	 		
        	 		\textbf{Case 1: The second fraction in \eqref{eq.psinf} tends to infinity.} In this case, the various (non-trivial) bubbles and half-bubbles are forming separately. We define $\varrho_k$ via 
        	 		$$2\varrho_k:=  dist_g(p^1_k,p^2_k)$$ and note that $\varrho_k/r^2_k\to \infty$. Therefore, for every $k$ the hypotheses of Theorem \ref{thm:blowup} apply to at least one component of $M_k\cap B_{\varrho_k}(p^2_k)$ and thus (arguing as we did when dealing with the first point-scale sequence) we get that
        	 		\begin{equation*}
        	 		\widetilde{M}_k^2 \cap B^{\R^{n+1}}_{\varrho_k/r^2_k}(0) := \frac{1}{r^2_k}\big(M_k\cap B_{\varrho_k}(p^2_k)- p^2_k\big)
        	 		\end{equation*}
        	 		smoothly converges (on compact sets) with multiplicity one to a collection $\left\{\Sigma^2_i\right\}_{i\in I(2)}$ which consist of either bubbles or a half-bubbles.\footnote{Once again, when $n\geq 3$ there may be hyperplanes appearing as smooth limits here, but there is always at least one non-trivial (half-)bubble.} Note that we also have 
        	 		$$\lambda_1(M_k\cap B_{2r^2_k}(p^2_k))=\lambda_1(M_k\cap (B_{2r^2_k}(p^2_k)\setminus B_{2r^1_k}(p^1_k)))\to \infty$$ by the same theorem. Again, if $n=2$, then the limit surface must be connected and a non-trivial half-bubble if we are in Case II of Theorem \ref{thm:blowup}. Moreover, as in \eqref{eq.firstbubble}, we have
        	 		\begin{equation}\label{eq.secbubble1}
        	 		\lim_{R\to \infty} \lim_{k\to \infty} \int_{M_k\cap B_{Rr^2_k}(p^2_k)} |A_k|^n \, d\h^n =\sum_i \mathcal{A}(\Sigma^2_i).
        	 		\end{equation}

        	 		\textbf{Case 2: The second fraction in \eqref{eq.psinf} is bounded by some $C_0<\infty$.} In this case, the second (non-trivial) bubble or half-bubble is forming on or near the first. We say that these bubbles are forming in a \emph{string}. Before describing the global and final picture, which shall be obtained (as always for a string of bubbles) by blowing-up centered at the first point $p^1_k$ in the string, let us analyze the structure of the second bubble in the string. 
        	 		
        	 		\

        	 		We consider the blow-up sequence with centers $p^2_k$ and scales $r^2_k$, that is we define
        	 		\begin{equation*}
        	 		\widetilde{M}_k^2 \cap B^{\R^{n+1}}_{\delta/r^2_k}(0) := \frac{1}{r^2_k}\big(M_k\cap B_\delta(p^2_k)- p^2_k\big).
        	 		\end{equation*}
        	 		
        	 		If we let $P_k$ denote the connected component of $M_k\cap B_{2r^2_k}(p^2_k)$ containing $p^2_k$, our definition of the class $\mathfrak{C}^2_k$ ensures that we have stability of $P_k$ on all balls centered at $z\in P_k$ and radius $r^2_k/2$, while we have instability (again, of $P_k$) on the ball of center $p^2_k$ and radius $r^2_k+(r^2_k)^2$. Hence, an argument along the lines of the proof of Theorem \ref{thm:blowup} ensures convergence of $P_k$ to a non-trivial (half-)bubble in $\R^{n+1}$, the convergence happening smoothly with multiplicity one. Also, notice (for completeness) that
        	 		at this scale some connected components of $\widetilde{M}_k^2$ (actually the components which correspond to the elements $\left\{\Sigma^1_i\right\}_{i\in I(1)}$) must converge to a hyperplane, but there can be no point(s) of bad convergence as far as the convergence of $P_k$ is concerned.

        	 		That being said, the usual scaling argument gives $$\lambda_1(M_k\cap(B_{2r^2_k}(p^2_k)\setminus B_{2r^1_k}(p^1_k)))\leq \lambda_1(Q^2_k\cap(B_{2r^2_k}(p^2_k)\setminus B_{Rr^1_k}(p^1_k)))\to -\infty$$ for any $R$.

        	 		A posteriori, it is clear (by virtue of the uniform boundedness of the ratio $dist_g(p_k^1,p_k^2)/r^2_k$) that we could equivalently blow-up around $p^1_k$, again with the same scale $r^2_k$ though, which would result in obtaining the very same limit hypersurfaces in $\R^{n+1}$ modulo Euclidean isometries. We will always employ this convention when dealing with strings of (half-)bubbles.
        	 			Denote by $\left\{\Sigma^2_i \right\}_{i\in I(2)}$ the set of minimal hypersurfaces one obtains by performing this blow-up, namely when letting $k\to\infty$ in
        	 			\[
        	 			\frac{1}{r^2_k}\big(M_k\cap B_\delta(p^1_k)- p^1_k\big).
        	 			\]
        	 			We further have
        	 		\begin{equation}\label{eq.secbubble2}
        	 		\lim_{\delta_1\to 0}\lim_{R\to \infty}\lim_{k\to \infty} \int_{M_k\cap B_{Rr^2_k}(p^1_k)\setminus B_{\delta_1 r^2_k}(p^1_k)} |A_k|^n \,d\h^n =  \sum_j \mathcal{A}(\Sigma^2_j),
        	 		\end{equation}
        	 		while note that we have not yet controlled the term
        	 		\begin{equation}\label{eq.neck1}
        	 		\lim_{\delta_1\to 0}\lim_{R\to \infty}\lim_{k\to \infty}\int_{M_k\cap B_{\delta_1 r^2_k}(p^1_k)\setminus B_{Rr^1_k}(p^1_k)} |A_k|^n d \h^n.
        	 		\end{equation}
        	 		However, we will see that $M_k$ is a neck or half-neck of order $(\eta (R,\delta_1), L(R,\delta_1))$ in this region. We will deal with neck regions like this in the following section, where we will see that the above limit actually vanishes.
        	 		
        	 		When $n=2$, the half-space theorem shows that a (half-)plane and a non-trivial component would necessarily have to intersect, a contradiction. Hence, Case 2 cannot occur in dimension $n=2$; we necessarily have to be in Case 1.\\
        	 		
        	 		\textbf{Further point-scale sequences:}
        	 		We continue with the above described scheme iteratively. Suppose we have extracted $\ell-1$ point-scale sequences (including the ones that we have ignored) for $y$. Then we  continue (or not) under the following rules: let $U^{\ell-1}_k = \bigcup_{s=1}^{\ell-1} B_{2r^s_k }(p_k^s)$ and define an admissible class of balls with $r^{\ell -1}_k<\delta$ and
        	 		\begin{align*}
        	 		\mathfrak{C}^\ell_k:=\Big\{&B_r(p) \ : \ M_k\cap (B_r(p)\setminus U^{\ell -1}_k  \,\,\text{is unstable and $p\in B_\delta (y)\cap M_k$. Furthermore } \\
        		&\text{if $B_{2r}(p)\cap U^{\ell-1}_k\neq\emptyset$ then the connected component $Q$ of $B_{10r}(p)\cap M_k$} \\
        		&\text{containing $p$ is disjoint from $U^{\ell-1}_k$ and $Q\cap B_{r}(p)$ is itself unstable.} \Big\}.
        	 		\end{align*}
        	 		Now if $\mathfrak{C}^\ell_k =\emptyset$ we stop the process here. If not, set
        	 		\begin{equation*}
        	 		r^\ell_k = \inf \Big\{r>0 \ : \ B_r(p)\in \mathfrak{C}^{\ell}_k\Big\}.
        	 		\end{equation*} and pick $p^{\ell}_k\in B_\delta(y)\cap M_k$ so that $B_{r^\ell_k +(r^{\ell}_k)^2}(p^\ell_k)\in \mathfrak{C}^\ell_k$ (in particular $M_k \cap\big( B_{r^\ell_k +(r^{\ell}_k)^2}(p^\ell_k)\setminus U^\ell_k\big)$ is unstable). 
        	 		
        	 		Again, we must have $r^\ell_k\geq r^{\ell-1}_k$ (since the class of admissible balls gets smaller). If $r^\ell_k \not\to 0$ then we discard $(p_k^\ell, r_k^\ell)$ and the process stops. If we do have $r_k^\ell \to 0$, then we ask whether or not it is true that
        	 		\begin{equation}\label{eq.psinf2}
        	 		\min_{i=1,\dots, \ell-1}\limsup_{k\to \infty}  \bigg(\frac{r^\ell_k}{r^{i}_k} + \frac{dist_g(p_k^\ell,p_k^i)}{r^\ell_k} \bigg) = \infty.
        	 		\end{equation}
        	 		If the answer is negative, then there exists $C<\infty$ such that $B_{r^\ell_k}(p^\ell_k)\subset B_{Cr^s_k}(p^s_k)$ \emph{and} $r^s_k\leq C r^s_k$ for all $k$ and some $s<\ell$ so that any blow-up corresponding to the sequence will yield a limit scenario that has already been captured at an earlier step. As before we keep track of the regions $B_{2r^\ell_k}(p^\ell_k)$ and we also note that 
        	 		$$\lambda_1(M_k \cap (B_{2r^\ell_k}(p^\ell_k)\setminus U^{\ell-1}_k))\to -\infty$$ by the last part of Theorem \ref{thm:blowup}.
        	 		
        	 		If on the other hand \eqref{eq.psinf2} holds, then after passing to a subsequence for which the $\limsup$ in \eqref{eq.psinf2} is actually a limit, we distinguish the following cases:\\
        	 		
        	 		\textbf{Case 1': The second fraction in \eqref{eq.psinf2} tends to infinity for all $i$.} In this case, the (half-)bubble is forming separately from the previously extracted (half-)bubbles and strings of bubbles. We can follow Case 1 from above, except that this time we blow-up centered at $p^\ell_k$ in a ball of radius $\varrho^\ell_k$ satisfying 
        	 		$$2\varrho^\ell_k = \min_{i<\ell} dist_g(p^\ell_k, p^i_k).$$ We blow-up at scale $r^\ell_k$ to obtain some collection $\left\{\Sigma^\ell_i\right\}_{i\in I(\ell)}$ whose elements are either bubbles or half-bubbles, non-trivial in both cases, with
        	 		$$\lambda_1(M_k \cap (B_{2r^\ell_k}(p^\ell_k)\setminus U^{\ell-1}_k))=\lambda_1(M_k\cap B_{2r^\ell_k}(p^\ell_k))\to -\infty$$ and
        	 		\begin{equation}\label{eq.lthbubble1}
        	 		\lim_{R\to \infty} \lim_{k\to \infty} \int_{M_k\cap B_{Rr^\ell_k}(p^\ell_k)} |A_k|^n \,d\h^n = \sum_{i\in I(\ell)}\mathcal{A}(\Sigma^\ell_i).
        	 		\end{equation}
        	 		
        	 		\textbf{Case 2': The second fraction in \eqref{eq.psinf2} is bounded for some $i$.} We first note that if this is the case for some index $i$ and $\Sigma^i$ is an element in a string of (half-)bubbles, then the property also holds for any other index $s$ corresponding to other non-trivial (half-)bubbles $\Sigma^s$ in the same string.  See \cite{BS17}, p.~4389, for more details on this observation. It is possible that the second fraction in \eqref{eq.psinf2} could be bounded for indices corresponding to elements in two or more distinct strings, i.e. ~at \emph{this scale} these previously distinct strings appear together, in which case we refer to the union of these strings as one new string. Note that this shows in particular that we must have $r^\ell_k/r^i_k\to \infty$ for any $i$ corresponding to elements in this string. Call the \emph{earlier} indices of the string $i_1,\ldots,i_m$ (so that $\ell=i_{m+1}$).
        	 		
        	 		Similarly as in Case 2 above, we now blow-up centered at the first point in the string (which is $p^{i_1}_k$), but at a scale $r^\ell_k$. As above, this is impossible when $n=2$, while for $n\geq 3$ Theorem \ref{thm:blowup} yields a collection of non-trivial (half-)bubbles $\Sigma^\ell_j$ and also implies $$\lambda_1 (M_k\cap (B_{2r^\ell_k}(p^\ell_k)\setminus U^{\ell-1}_k))\leq\lambda_1(M_k\cap(B_{2r^\ell_k}(p^\ell_k)\setminus B_{Rr^{i_m}_k}(p^{i_1}_k)))\to -\infty$$ as well as
        	 		\begin{equation}\label{eq.lthbubble2}
        	 		\lim_{\delta_1\to 0 }\lim_{R\to \infty}\lim_{k\to \infty} \int_{M_k\cap B_{Rr^\ell_k}(p^{i_1}_k)\setminus B_{\delta_1 r^{i_m}_k}(p^{i_1}_k)} |A_k|^n d \h^n = \mathcal{A}(\Sigma_\ell).  
        	 		\end{equation} 
        	 		
        	 		Again, there is a neck region between the previously largest (half-)bubble in the string with index $i_m$ and the new $\ell$-th (half-)bubble, which we still have not controlled. We will deal with it in the following section.

        	 		Notice that at each stage of the point-scale selection and blow-up process we are accounting for a new subdomain on $M_k$ where $\lambda_1 \to -\infty$ thus this process stops after at most $p-1$ iterations, until we have exhausted all point-scale sequences. Each new scale yields either a non-trivial bubble forming on one of the previous bubbles (in a string), or it is occurring on its own scale. Each time we are accounting for all the total curvature \emph{except} on the (half-)neck regions between consecutive elements in a string as in \eqref{eq.neck1}. 
        	 		
        	 		If we take a distinct point-scale sequence $(q_k,\varrho_k)$ as in the final clause of the statement of the theorem, then if we blow-up at this scale and we end up with something non-trivial in the limit, then we must have captured some more coalescing index in an admissible ball, but this cannot happen since by construction we have exhausted \emph{all unstable regions} in the process.
        	 	\end{proof}
        	 	
        	 	Another important point of notation is determined in the next definition. Notice that at each bubble point $y$ we can classify the point-scale sequences into finitely many different strings, recalling that when $n=2$ each string has only one element, or equivalently there are no strings. 
        	 	
        	 	\begin{definition}\label{def:int}
        	 		Given a point-scale sequence $(p^i_k, r^i_k)$ corresponding to a non-trivial bubble forming at $y$, and in a string whose first non-trivial bubble forms at the points $p^1_k$ we are in one of three scenarios
        	 		\begin{enumerate}
        	 			\item This is the final (or only) bubble in a string and there are no other strings forming at $y$. Set $\xi_k = 1$ (so in fact it is independent of $k$). 
        	 			\item This is the final (or only) bubble in a string and the closest distinct string forming at $y$ has its first bubble forming at points $q^j_k$ and final bubble scale $s^j_k$. Setting $\xi_k = dist_g(p^i_k,q^j_k)$ we have $\xi_k/(r^i_k+s^j_k) \to \infty.$
        	 			\item This is not the final bubble in a string, and the next non-trivial bubble occurs at scale $\xi_k = r^{i+1}_k$ and we have $\xi_k/r^i_k \to \infty$.  
        	 		\end{enumerate}
        	 		In any of the above cases we say that $\xi_k$ is the \emph{intermediate scale}.
        	 		
        	 		The \emph{neck region}\footnote{A point of notation; the neck region in a bubbling analysis should not be confused with the (perhaps more geometric) neck appearing as part of the bubble. For instance if one considers a blown-down catenoid in Euclidean space, centered at the origin and converging smoothly to a double plane away from the origin (say at scale $r_k \to 0$), the neck region would refer to $B_{\delta}\setminus B_{Rr_k}$ for $k$ large, $R$ large and $\delta$ small. This should not be confused with the degenerating `neck' of the catenoid.}   of this bubble scale is defined to be 
        	 		$$M_k\cap (B_{\delta \xi_k}(p^1_k)\setminus B_{R r^i_k}(p^1_k))$$
        	 		for $\delta$ sufficiently small, $R$, $k$ sufficiently large. 
        	 	\end{definition}
        	 	
        	 	The neck regions are precisely those that we have not analyzed yet; we will deal with these in the next section.  
        A first corollary of Theorem \ref{thm:bubbles}, which we will further improve, is the following. 
        	 	
        	 	\begin{cor}\label{cor:bubbles}
        	 		With the setup as in Theorem \ref{thm:bubbles}, denoting by $\{\Sigma_j\}_{j=1}^J$ the collection of all the non-trivial bubbles and half-bubbles, we have
        	 		\begin{equation*}
        	 		\lim_{k\to\infty}\mathcal{A}(M_k) \geq m\mathcal{A}(M) + \sum_{j=1}^J \mathcal{A}(\Sigma_j).\\
        	 		\end{equation*}
        	 	\end{cor}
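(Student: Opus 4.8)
The plan is to deduce the inequality purely from the non-negativity of $|A_k|^n$ together with the convergence statements already established in the proof of Theorem \ref{thm:bubbles}, by summing the curvature over a suitable family of pairwise disjoint regions of $M_k$. No neck analysis is needed here, and this is precisely why at this stage one obtains only $\geq$ rather than an identity: the curvature carried by the neck regions is non-negative and is simply discarded, its vanishing being established only in Section \ref{sec:neck}.

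First I would fix a small $\delta>0$ as in the proof of Theorem \ref{thm:bubbles}. By \eqref{eq.mainlimit}, for each such $\delta$ the quantity
\[
f(\delta):=\lim_{k\to\infty}\int_{M_k\setminus \left(\bigcup_{y\in \mathcal{Y}\cap \partial N} B_{\delta}(y)\right)}|A_k|^n\,d\h^n
\]
exists, and $f(\delta)\to m\,\mathcal{A}(M)+\sum_{j:\,y_j\in N\setminus\partial N}\mathcal{A}(\Sigma_j)$ as $\delta\to 0$; thus the complement of the boundary concentration balls already accounts for $m\mathcal{A}(M)$ and for all the non-trivial bubbles forming at interior points of $\mathcal{Y}$. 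It only remains to add the contributions of the non-trivial bubbles and half-bubbles concentrating at points of $\mathcal{Y}\cap\partial N$.

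Next I would recall that, in the proof of Theorem \ref{thm:bubbles}, each retained point-scale sequence at a point $y\in\mathcal{Y}\cap\partial N$ comes with a region of $M_k$ on which the renormalized hypersurfaces converge smoothly, with multiplicity one, to the corresponding collection of (half-)bubbles: a ball $B_{Rr^i_k}(p^i_k)$ when the (half-)bubble is the only one, or the first, of its string (cf. \eqref{eq.firstbubble}, \eqref{eq.secbubble1}, \eqref{eq.lthbubble1}), and an annulus $B_{Rr^i_k}(p^{i_1}_k)\setminus B_{\delta_1 r^i_k}(p^{i_1}_k)$ at its own scale $r^i_k$ when it is a later element of a string whose first (half-)bubble forms at the points $p^{i_1}_k$ (cf. \eqref{eq.secbubble2}, \eqref{eq.lthbubble2}). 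For fixed $R$, $\delta_1$ and for $k$ large these regions, call them $\mathcal{R}^i_k(R,\delta_1)$, are pairwise disjoint: within a single string because consecutive scales satisfy $r^i_k/r^{i+1}_k\to 0$, and between distinct strings and between distinct points of $\mathcal{Y}$ because of the separation conditions \eqref{eq.psinf}, \eqref{eq.psinf2} (and the property $(*)$ of Theorem \ref{thm:bubbles} when $n=2$) together with the defining properties of the classes $\mathfrak{C}^\ell_k$; moreover each of them lies inside $\bigcup_{y\in\mathcal{Y}\cap\partial N}B_\delta(y)$ for $k$ large, hence is disjoint from the complement appearing in $f(\delta)$. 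Since $|A_k|^n\geq 0$ and there are only finitely many retained point-scale sequences, this gives, for $k$ large (depending on $\delta$, $R$, $\delta_1$),
\[
\mathcal{A}(M_k)\ \geq\ \int_{M_k\setminus \left(\bigcup_{y\in \mathcal{Y}\cap \partial N} B_{\delta}(y)\right)}|A_k|^n\,d\h^n\ +\ \sum_{i}\int_{\mathcal{R}^i_k(R,\delta_1)}|A_k|^n\,d\h^n .
\]

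Finally I would pass to the limit, taking $\liminf_{k\to\infty}$, then $R\to\infty$ and $\delta_1\to 0$, and then $\delta\to 0$. By \eqref{eq.firstbubble}--\eqref{eq.lthbubble2} the $i$-th summand has a limit in $k$, and letting $R\to\infty$, $\delta_1\to 0$ this limit converges to the total curvature of the corresponding collection of limit hypersurfaces; since the possible flat (half-)hyperplanes in these collections contribute nothing to $\mathcal{A}$, the whole sum converges to $\sum_{j:\,y_j\in\partial N}\mathcal{A}(\Sigma_j)$, i.e. to the total curvature of all non-trivial bubbles and half-bubbles concentrating on $\partial N$. Meanwhile the first term equals $f(\delta)$, which tends to $m\mathcal{A}(M)+\sum_{j:\,y_j\in N\setminus\partial N}\mathcal{A}(\Sigma_j)$ as $\delta\to 0$. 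Adding the two contributions yields $\liminf_{k\to\infty}\mathcal{A}(M_k)\geq m\mathcal{A}(M)+\sum_{j=1}^J\mathcal{A}(\Sigma_j)$, which is the assertion (and, passing if necessary to a further subsequence along which $\mathcal{A}(M_k)$ converges, in the form with $\lim$ as stated). The only genuinely delicate point is the combinatorial bookkeeping guaranteeing that all these regions are disjoint and that no portion of curvature is counted twice — in particular the correct treatment of strings of bubbles — but this is already implicit in the construction carried out for Theorem \ref{thm:bubbles}, so no new analytic input is required.
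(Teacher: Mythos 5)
Your proposal is correct and follows essentially the same route as the paper: the paper's proof of Corollary \ref{cor:bubbles} consists precisely of combining \eqref{eq.mainlimit}, \eqref{eq.firstbubble}, \eqref{eq.secbubble1}, \eqref{eq.secbubble2}, \eqref{eq.lthbubble1} and \eqref{eq.lthbubble2} and observing that the regions involved are mutually disjoint, so that the non-negative curvature in the remaining (neck) regions is simply discarded. Your write-up merely makes explicit the disjointness bookkeeping and the limiting procedure in $k$, $R$, $\delta_1$, $\delta$ that the paper leaves implicit.
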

        	 	
        	 	\begin{proof}
        	 		This follows directly from combining \eqref{eq.mainlimit}, \eqref{eq.firstbubble}, \eqref{eq.secbubble1}, \eqref{eq.secbubble2}, \eqref{eq.lthbubble1}, and \eqref{eq.lthbubble2} and noting that the regions considered in these equations are mutually disjoint. 
        	 	\end{proof}

        	 	\section{Neck analysis}\label{sec:neck}
        	 	
        	 	The goal of this section is to finish the proof of Theorem \ref{thm:quant} improving the inequality in Corollary \ref{cor:bubbles} to an equality, by showing that no further total curvature can concentrate in the neck regions (see Definition \ref{def:int}).

        	 	Thus we content ourselves with proving that the limits of the form \eqref{eq.neck1} are zero. In fact we will prove a little more than this: that the ends of each bubble or half-bubble must be parallel to $T_y M$ (in a suitable sense), see Lemma \ref{lem:thm1}. We will prove that in either case, for $\delta$ small enough and $R,k$ large enough, these regions are described in precisely three different scenarios which we describe now.

        	 	\begin{definition}\label{neckdef}
        	 		Let $M\in \mathfrak{M}$. For $p\in M$ we say that $M\cap B_\delta(p)\setminus B_{\eps}(p)$ is a \emph{neck of order} ($\eta, L$) if we have $\eps < \delta/4$ and $M\cap (B_\delta\setminus B_{\eps})$ is uniformly graphical over some plane which we may assume (after a rotation) to be defined by $\{x^{n+1}=0\}$ in normal coordinates about $p$. More precisely there exist functions $u^1,\dots u^L$ such that 
        	 		\begin{equation*}
        	 		M\cap (B_\delta \setminus B_{\eps}) = \bigcup_{i=1}^L \; \{(x^1,\dots , x^n, u^i(x^1,\dots, x^n))\}
        	 		\end{equation*}
        	 		and also 
        	 		\begin{equation}
        	 		\eta:=\sup_{i=1,\ldots,L} \sup_{x\in B_\delta \setminus B_{\eps}} |\D u^i (x)| + \sup_{\eps< \varrho < \frac{\delta}{2}} \int_{M \cap (B_{2\varrho}\setminus B_\varrho)} |A|^n d\h^n < \infty. 
        	 		\end{equation}
        	 	\end{definition}
        	 	
        	 	For the below definitions we will deal with Euclidean balls in Fermi-coordinate neighborhoods (see the beginning of Section \ref{sec:bubbling}). For simplicity we now translate these neighborhoods so that $\Pi_1 = \{x^1 \geq 0\}$ from now on. In particular we will deal with Euclidean half-balls
        	 	$$B_\delta^+=B^{\R^{n+1}}_\delta (0)\cap \Pi_1\subset  [0,\gamma)\times B^{\R^n}_{\gamma}(0)=C^{\R^{n+1}}_\gamma(0),$$ and notice that these correspond to some simply connected domain (though not a geodesic ball) in $N$ which we denote $\widehat{B}^N_\delta(p)\subset C_{\gamma}(p)$ for all $p\in \partial N$.
        	 	
        	 	\begin{definition}\label{neckdef_nonc_boundary}
        	 		Let $M\in \mathfrak{M}$. For $p\in \partial N\cap M$ we say that $M\cap B^{+}_\delta\setminus B^{+}_{\eps}$ is a \emph{half-neck of order} ($\eta, L$) if we have $\eps < \delta/4$ and $M\cap (B_\delta^+\setminus B_{\eps}^+)$ is uniformly graphical over some plane which we may assume (after a rotation) to be defined by $\{x^{n+1}=0\}$ in Fermi-coordinates about $p$. More precisely there exist functions $u^1,\dots u^L$ such that 
        	 		\begin{equation*}
        	 		M\cap (B_\delta^+ \setminus B_{\eps}^+) = \bigcup_{i=1}^L \; \{(x^1,\dots , x^n, u^i(x^1,\dots, x^n))\}
        	 		\end{equation*}
        	 		and also 
        	 		\begin{equation}
        	 		\eta:=\sup_{i=1,\ldots,L}\sup_{x\in B^+_\delta \setminus B^+_{\eps}} |\D u^i|+ \sup_{\eps< \varrho < \frac{\delta}{2}} \int_{M \cap (B^+_{2\varrho}\setminus B^+_\varrho)} |A|^n \, d\h^n  < \infty. 
        	 		\end{equation}
        	 	\end{definition}
        	 	
        	 	\begin{rmk}\label{rmk_reflection} 
        	 		
        	 		We can reflect a half-neck of order $(\eta, L)$ across $\Pi=\{x^1 =0\}$ to obtain a neck of order $(2\eta, L)$ at $p$ as per Definition \ref{neckdef}, except now the minimal surface lies inside a Riemannian manifold with a Lipschitz-regular metric across $\Pi$ -- in particular the full neck we obtain will be at least $W^{2,\infty}$-regular, and in general no more (though smooth away from $\Pi$). We therefore have more than enough regularity to analyze the local properties of a half-neck via the reflected full neck.  
        	 		
        	 		Consider the reflection $\sigma$ about the plane $\{x^1=0\}$ in $\R^{n+1}$ and define a metric on $\check{C}^{\R^{n+1}}_{\gamma}=B_\gamma^{\R^n}(0)\times (-\gamma,\gamma)$ via 
        	 		$$\check{g} = \twopartdef{g}{x^1\geq 0}{\sigma^{\ast}g}{x^1<0.}$$
        	 		This metric is smooth away from $\{x^1=0\}$ and Lipschitz on $\check{C}^{\R^{n+1}}_{\gamma}$ (it is smooth if $\partial N$ is totally geodesic in $N$ near $p$).
        	 		
        	 		Now, if $u$ is a free boundary minimal graph over $\Omega \subset C^{\R^{n+1}}_{\gamma}\cap \{x^{n+1}=0\}$, describing some piece of a free boundary minimal surface $\Sigma \subset C^{\R^{n+1}}_{\gamma}$ then we have that $\Sigma$ is parametrized by 
        	 		$$(x^1,\dots,x^n)\mapsto (x^1,\dots, x^n, u(x^1,\dots , x^n))$$
        	 		and 
        	 		$$\frac{\partial u}{\partial x^1}(0,x^2,\dots x^n) = 0$$
        	 		for all boundary points where $u$ is defined i.e. on $\Omega\cap \{x^1 = 0\}$. 
        	 		
        	 		Defining, for $x=(x^1,\dots x^n)\in \Omega$,   
        	 		$$\check{u}(x) = \twopartdef{u(x)}{x_1\geq 0}{u\circ \sigma (x)}{x_1<0}$$
        	 		we see that $\check{u}$ is a $C^1$ minimal graph over $\check{\Omega}=\Omega\cup \sigma(\Omega)$ describing $\check{\Sigma} = \Sigma\cup \sigma(\Sigma)$ with respect to a Lipschitz ambient metric. Thus in particular $\check{u}\in W^{2,\infty} (\check{\Omega})$ since $u$ was smooth up to the boundary and is $C^1$ across the boundary. We cannot improve on the regularity of $\check{u}$ since it solves an elliptic equation of the form $L\check{u}=f(\check{u},\D \check{u})$ where $L$ is an elliptic operator whose coefficients are $C^0$-close to those of the Euclidean Laplacian and $f\in L^\infty$ but no more, unless $\partial N$ is totally geodesic near $p$ in which case the coefficients of $L$ and $f$ become smooth, and we conclude full regularity via a boot-strapping argument.
        	 		
        	 		%
        	 	\end{rmk}
        	 	
        	 	The below definition looks similar to the preceding one, however it is in fact encapsulating something very different. This will correspond to a bubble with a compact boundary, and in fact its ends will be graphical over $T_y \partial N$ whereas the previous definition corresponds to a bubble with a non-compact boundary component whose ends will be graphical over a plane orthogonal to $\Pi$. 
        	 	
        	 	\begin{definition}\label{neckdef_c_boundary}
        	 		Let $M\in \mathfrak{M}$. For $p\in \partial N\cap M$ we say that $M\cap B^{+}_\delta\setminus B^{+}_{\eps}$ is a \emph{compact neck of order} ($\eta, L$) if we have $\eps < \delta/4$ and $M\cap (B_\delta^+\setminus B_{\eps}^+)$ is uniformly graphical over $\{x^{1}=0\}$ in Fermi-coordinates about $p$ (see the beginning of section \ref{sec:bubbling}). More precisely there exist functions $u^1,\dots u^L$ such that 
        	 		\begin{equation*}
        	 		M\cap (B_\delta^+ \setminus B_{\eps}^+) = \bigcup_{i=1}^L \; \{(u^i(x^2,\dots,x^{n+1}),x^2,\dots ,x^{n+1})\}
        	 		\end{equation*}
        	 		and also 
        	 		\begin{equation}
        	 		\eta:=\sup_{i=1,\ldots,L} \sup_{x\in B^+_\delta \setminus B^+_{\eps}}  |\D u^i | + \sup_{\eps< \varrho < \frac{\delta}{2}} \int_{M \cap (B^+_{2\varrho}\setminus B^+_\varrho)} |A|^n \, d\h^n< \infty. 
        	 		\end{equation}
        	 	\end{definition}
        	 	
        	 	We can now continue the proof of Theorem \ref{thm:quant} by studying regions of the form $M_k\cap (B_{\delta}(p^y_k)\setminus B_{R r^y_k}(p^y_k))$ -- each neck region is of this form, and many different necks will appear in general, but the usual covering argument will allow us to study only one such region in detail. This result is similar in spirit to \cite[Theorem 1.1]{W15}.

    	 	\begin{lem}\label{lem:thm1} 
        	 		With the setup as Theorems \ref{thm:quant}, and \ref{thm:bubbles}, given $y\in \mathcal{Y}\cap\partial N$ then each bubble that appears has ends which are parallel to $T_y M$ in the following sense:
        	 		\begin{enumerate}
        	 		\item{Given a point-scale sequence $(p^i_k, r^i_k)$ corresponding to a non-trivial bubble, we let $\xi_k$ be the intermediate scale (see Definition \ref{def:int}).        		
        	 		Then for $\delta$ sufficiently small, $R$, $k$ sufficiently large, the neck region $$M_k\cap (B_{\delta \xi_k}(p_k)\setminus B_{R r_k}(p_k))$$ is described by a neck, half-neck, or compact neck of order $(\eta, L)$ with 
        	 		$$\lim_{\delta \to 0}\lim_{R\to \infty} \lim_{k\to \infty} \eta \to 0.$$}
        	 		\item{If we let $s_k\to 0$ be any sequence then by blowing up at coordinates centered at $y$ to give $\widehat{M}_k \subset B_{\delta/s_k}(y)$ via 
        	 		$$\widehat{M}_k = \frac{1}{s_k}(M_k\cap B_{\delta}(y)),$$
        	 		the limit is a collection of bubbles (possibly all hyperplanes or even empty) all of whose ends are parallel to $T_y M$. }
        	 	\end{enumerate}
        	 		\end{lem}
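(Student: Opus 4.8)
The plan is to adapt the neck analysis of \cite[Sections~3--4]{BS17} to the free boundary setting; the three cases in Definitions \ref{neckdef}, \ref{neckdef_nonc_boundary}, \ref{neckdef_c_boundary} arise, respectively, from an interior bubble, from a half-bubble with non-compact free boundary, and from a half-bubble with compact free boundary (or a full bubble) forming at $y\in\partial N$. We prove part (1) first. A half-neck region is reflected across $\Pi$ via Remark \ref{rmk_reflection}, turning it into a genuine neck inside an ambient metric which is only $W^{2,\infty}$ across $\Pi$ --- enough regularity for all the estimates below --- while a compact neck is treated directly, replacing each interior estimate by its free boundary analogue. The key structural input, coming from the point-scale construction in the proof of Theorem \ref{thm:bubbles}, is that on the neck region $M_k\cap(B_{\delta\xi_k}(p^1_k)\setminus B_{Rr^i_k}(p^1_k))$ the hypersurface $M_k$ is stable on every ball centred on the neck whose radius is a fixed small multiple of its distance to $p^1_k$ --- otherwise one more point-scale sequence would have been extracted at $y$. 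Hence the Schoen--Simon interior and free boundary curvature estimates \cite{SS81} give the scale-invariant bound $|A_k|(z)\le C/dist_g(z,p^1_k)$ on the neck, in particular a uniform bound on the total curvature of every dyadic annulus.

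The heart of the proof is that no curvature is lost in the neck, i.e.
$$\lim_{\delta\to 0}\lim_{R\to\infty}\lim_{k\to\infty}\ \sup_{Rr^i_k<\varrho<\delta\xi_k/2}\int_{M_k\cap(B_{2\varrho}(p^1_k)\setminus B_\varrho(p^1_k))}|A_k|^n\,d\h^n=0 .$$
Suppose not; then we may extract an intermediate scale $\varrho_k$, with $\varrho_k/r^i_k\to\infty$ and $\xi_k/\varrho_k\to\infty$ (hence separated from all the bubble scales already extracted at $y$), carrying a definite amount $\varepsilon_0>0$ of curvature on some dyadic annulus about $p^1_k$. Rescaling $M_k$ by $\varrho_k$, the rescaled domains exhaust $\R^{n+1}$ (or a half-space), so by Lemma \ref{lemma:local} --- using the curvature bound above for compactness --- the rescaled hypersurfaces subconverge, away from at most finitely many points of bad convergence, to a complete limit which by Lemma \ref{lemma:local}(2) is a (half-)bubble or a (half-)hyperplane; since the $\varepsilon_0$ of curvature survives on a fixed annulus, either the limit is a non-trivial (half-)bubble or a point of bad convergence detects one after a further blow-up. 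Either way we contradict the fact that the point-scale construction of Theorem \ref{thm:bubbles} has already captured \emph{all} regions of coalescing index at $y$ (cf.\ the last paragraph of its proof): a point-scale sequence separated from those already extracted must have a flat blow-up.

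Feeding this into $\varepsilon$-regularity for minimal hypersurfaces, the curvature bound improves in each dyadic annulus to $dist_g(x,p^1_k)\,|A_k|(x)=o(1)$ in the iterated limit, so that over every dyadic annulus $M_k$ decomposes into at most $m$ graphs of functions with gradient $o(1)$ over some hyperplane. The remaining point is that this hyperplane can be taken once and for all to be $T_yM$: the small curvature on overlapping consecutive annuli forces the approximating planes at neighbouring scales to differ by $o(1)$, while at the outer end the plane must equal $T_yM$ (since $M_k\to M$ smoothly away from $y$ and $M$ is near $y$ a small graph over $T_yM$) and at the inner end it must equal the asymptotic planes of the bubble, which exist and are regular at infinity by Remark \ref{rem:reginf}; anchoring the slowly varying plane at both ends, and using that the total curvature in the neck itself tends to zero, one concludes that the bubble's ends are parallel to $T_yM$ and that the neck region is a neck, half-neck or compact neck of order $(\eta,L)$ with $L\le m$ and $\eta\to0$ in the iterated limit, the reflection (resp.\ the orthogonality of $M_k$ and $M$ along $\Pi$) identifying the reference plane as in Definitions \ref{neckdef_nonc_boundary} and \ref{neckdef_c_boundary}. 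Part (2) is then quick: given $s_k\to0$, Lemma \ref{lemma:local}(2) gives that $\widehat M_k=s_k^{-1}(M_k\cap B_\delta(y))$ subconverges to a union $P$ of (half-)bubbles and (half-)hyperplanes; comparing $s_k$ with the finitely many bubble scales $r^i_k$ at $y$, either $s_k$ is comparable to one of them and $P$ is, up to a Euclidean isometry, the corresponding collection of bubbles and hyperplanes, whose ends are parallel to $T_yM$ by part (1), or $s_k$ is separated from all of them, hence lies in a neck region, so part (1) makes $\widehat M_k$ an $\eta$-graph over $T_yM$ with $\eta\to0$ and $P$ a union of hyperplanes parallel to $T_yM$. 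Either way all ends of $P$ are parallel to $T_yM$.

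The main obstacle is the second half of the previous paragraph: upgrading ``graphical over \emph{some} plane on each dyadic annulus'' to ``graphical over the \emph{single} plane $T_yM$ on the whole neck''. A long neck has logarithmically many scales, so a crude summation of the $o(1)$ per-scale rotations of the approximating plane need not be small; one has to exploit that the total curvature stored in the neck is itself vanishing (the content of the second paragraph) together with the two-sided anchoring --- by the bubble's ends at infinity and by $M$ at the outer scale --- in the spirit of \cite[Section~4]{BS17} and \cite[Theorem~1.1]{W15}. The boundary cases require, in addition, handling the merely Lipschitz reflected metric for half-necks and the free boundary versions of the curvature and $\varepsilon$-regularity estimates for compact necks, but these adaptations are by now routine.
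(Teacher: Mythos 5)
The overall skeleton is right --- you correctly identify the three neck types, you get the scale-invariant curvature estimate from stability plus Schoen--Simon, and your blow-down contradiction in the second paragraph is a valid proof that the supremum over dyadic annuli of total curvature vanishes in the iterated limit. But there is a genuine gap at exactly the point you flag as ``the main obstacle'', and the fix you propose does not close it.

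In the third and fourth paragraphs you anchor the slowly rotating approximating planes by invoking ``the total curvature stored in the neck is itself vanishing (the content of the second paragraph)''. The displayed formula in your second paragraph is a \emph{supremum} over dyadic annuli, not the total $\int_{M_k\cap(B_{\delta\xi_k}\setminus B_{Rr^i_k})}|A_k|^n$; those are different statements, since a neck spans $\log(\delta\xi_k/(Rr^i_k))\to\infty$ dyadic scales and $o(1)$ per-scale contributions need not sum to $o(1)$. Moreover, in this paper the total-curvature vanishing in the neck is precisely the content of the proof of Theorem~\ref{thm:quant}, whose argument (BS17 pp.~4392--4395, combined with Remark~\ref{rmk_reflection}) explicitly takes as input that the neck region is already a neck of order $(\eta,L)$ with $\eta\to 0$, i.e.\ Lemma~\ref{lem:thm1} itself --- so using total-curvature vanishing to prove the Lemma would be circular. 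What the paper does instead, following Claim~1 of \cite{BS17} (pp.~4390--4391), is to establish graphicality over a \emph{single} plane directly by a barrier/maximum-principle argument: in the compact-neck case the surface is trapped above the fixed plane $\{x^1=0\}$ so no maximum principle is needed, while in the half-neck case a free boundary minimal foliation near $q_k$ (built from Section~3 of \cite{ACS17}) plus a Hopf-boundary maximum principle does the job. This mechanism controls the tilt across all scales at once without ever invoking total-curvature vanishing.

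A secondary but related point: you reflect the half-neck across $\Pi$ at the very start, yielding a metric that is only $W^{2,\infty}$ across the interface. The paper reserves the reflection for the total-curvature estimate in the proof of Theorem~\ref{thm:quant} (where a Simons-type argument tolerates the reduced regularity) and keeps the smooth Fermi-coordinate picture for Lemma~\ref{lem:thm1}, precisely because the minimal foliation/barrier construction it uses does not go through comfortably in a merely Lipschitz metric. Since your proposed replacement for the maximum-principle step is the gap identified above, and any correct replacement should be expected to resemble it, working in the reflected picture is likely to re-create this regularity problem rather than avoid it.

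Part~(2) as you sketch it is fine once part~(1) is in place; the issue is entirely in part~(1).
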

        	 	
        	 	The above Lemma nearly completes the proof of Theorem \ref{thm:quant} and we also obtain the following possible types of behaviour.\\
        	 	
        	 	\framebox[1.8cm]{Case A:} If $y\in N\setminus \partial N$ then we are in the setting of \cite{BS17} and see only full bubbles whose ends are parallel to $T_y M$. Obviously, this case corresponds to Definition \ref{neckdef}. \\
        	 	
        	 	\framebox[1.8cm]{Case B:} If $y\in \partial N\cap \partial M$ then the above Lemma tells us that
        	 	the bubbles and half-bubbles that occur have ends parallel to $T_y M$, and as a result the half-bubbles have ends which are orthogonal to $\Pi$. The possible scenarios correspond to Definition \ref{neckdef} and Definition \ref{neckdef_nonc_boundary}.
        	 	
        	 	In particular, notice that the free boundary of each half-bubble has at least two non-compact components. This last assertion follows from Corollary \ref{cor:oneend} (which can be rephrased as: if a half-bubble has exactly one non-compact boundary component then it must be a hyperplane).  \\

        	 	\framebox[1.8cm]{Case C:} If $y\in \partial N\setminus \partial M$ then this time the above Lemma tells us that the only bubbles and half-bubbles that occur have ends which are parallel to $\Pi$. The possible scenarios correspond to Definition \ref{neckdef} and Definition \ref{neckdef_c_boundary}.
        	 	Notice that in this case the free boundary of each half-bubble is compact. This cannot occur under the assumption $(\textbf{P})$. In this case we also trivially have that, for all $\eta$ sufficiently small $\partial M_k\cap B_\eta(y) \to 0$ as varifolds.  \\
        	 	
        	 	In particular notice that in Cases B and C, it is possible for both full and half-bubbles to occur at a single point $y$.

       	\begin{proof}[Proof of Lemma \ref{lem:thm1}] 
        	 		In fact the second statement of the Lemma is relatively straightforward given the first, so we shall not discuss it. The more technical part is the first one -- which shows that the minimal hypersurfaces in the neck regions are behaving exactly like the ends of the bubbles -- and thus these ends are parallel to any larger-scale blow-up we wish to execute. 
        	 		
        	 		In Case I of Theorem \ref{thm:blowup}, which corresponds to an enclosed full bubble, the neck analysis as carried out in Section 4 of \cite{BS17} holds in precisely the same way, so we will not give any further details here and Lemma \ref{lem:thm1} is proved in this case. 
        	 		
        	 		It remains to deal purely with the setting of Case II in Theorem \ref{thm:blowup}, and in fact we only deal with the case when the blow-up limit is a half-bubble (since the full-bubble case has been dealt with implicitly above).

        	 		Thus, employing the very same notation as in the statement of Theorem \ref{thm:blowup}, we see that the ball of radius $(2c+1)r^i_k$ about $q_k$ contains the bubble region and we therefore take a Fermi-coordinate neighborhood about $q_k$ as described at the beginning of Section \ref{sec:bubbling}. Following the scheme in \cite{BS17} p.~4390 in the proof of Claim 1, will see that the neck region is either contained in a \emph{half-neck} or a \emph{compact neck} of order $(\eta, L)$ (after a rotation) and 
        	 		\[
        	 		\lim_{\delta\to 0}\lim_{R\to \infty}\lim_{k\to \infty} \eta = 0.
        	 		\]
        	 		
        	 		If the half-bubble has ends which are parallel to $\Pi$ then this argument is similar to Section 4 of \cite{BS17}. The blow-up argument in the neck region is more straightforward in this case since everything is happening above (and converging to) a fixed plane $\{x^1 = 0\}$, thus in particular no maximum-principle argument is required to prove that we have a compact neck of order $(\eta, L)$ with $\eta\to 0$. 
        	 		
        	 		If the half-bubble has ends which are orthogonal to $\Pi$ then using Section 3 of \cite{ACS17}, we construct a free boundary foliation near $q_k$ which will allow us to change coordinates and run a maximum principle argument as in \cite{BS17} p.~4391 to conclude that the neck region is a half-neck of order $(\eta ,L)$ and with $\eta \to 0$. We leave the details to the reader, noting that a Hopf-boundary maximum principle is required in this setting.       	
        	 	\end{proof}

        	 	\begin{proof}[Proof of Theorem \ref{thm:quant}] 
        	 		We will show that no total curvature is lost in the neck regions. The claimed curvature quantization result then follows from this fact and Corollary \ref{cor:bubbles}. 
        	 		
        	 		When we are dealing with a neck region or a compact neck region then the argument is exactly as in \cite{BS17}, p.~4392--4395. In the case that we have a half-neck the reflection procedure as described in Remark \ref{rmk_reflection} turns the \emph{half-neck} into a \emph{neck} as per Definition \ref{neckdef}, still with $\eta \to 0$. Notice that the graphs will no longer be smooth across $\{x^1 =0\}$, but they will be $W^{2,\infty}$-regular which allows us to run the argument as in \cite{BS17}, p.~4392--4395, to conclude that no total curvature is lost in such regions (in exactly the same fashion -- notice that this result does not require more regularity than we have). 
        	 		
        	 		The final statement of the theorem follows from a covering argument: since $\left\{M_k\right\}$ is converging (up to extracting subsequences, and possibly after a blow-up in the case of the bubble regions) to uniquely determined limit objects, given $k_1, k_2$ large integers we can cover the bubble regions, the neck regions, and large-scale regions of both $M_{k_1}$ and $M_{k_2}$ with finitely many charts that are all pairwise diffeomorphic. Thus, it follows that $M_{k_1}$ shall be diffeomorphic to $M_{k_2}$, which means that all elements of the sequence $\left\{M_k\right\}$ are eventually pairwise diffeomorphic to one-another. 
        	 	\end{proof}

		\section{Geometric applications}\label{sec:appl}
		
		Let us recall from Theorem \ref{thm:quant} that, under the assumption that a sequence of free boundary minimal hypersurfaces $\left\{M_k \right\}$ converge to $M$ with multiplicity $m\geq 1$ we have the quantization formula
		\[
		\lim_{k\to\infty}\mathcal{A}(M_k) =m\mathcal{A}(M)+ \sum_{j=1}^J\mathcal{A}(\Sigma_j).
		\]

		We now discuss how to turn it into a topological relation involving $M_k, M$ and the non-trivial bubbles (or half-bubbles) that arise in the blow-up procedure. A key point in that respect is the following assertion, of independent interest, which follows from \cite[Theorem 29]{ACS17}.
		
		\begin{prop}\label{prop:boundaryconv}
		In the setting of Theorem \ref{thm:quant}, let us regard $\partial M_k$ and $\partial M$ as integer $(n-1)$-dimensional varifolds $\mu_k$ and $\mu$, respectively, where it is understood that $\mu_k$ has unit multiplicity while $\mu$ has multiplicity $m$ as in the convergence statement. Then $\mu_k \to \mu$ in the standard weak sense of varifolds, namely for any $f\in C(\mathbb{G})$
		\[
		\lim_{k\to\infty}\int_{\mathbb{G}} f\,d\mu_k =\int_{\mathbb{G}} f\,d\mu
		\]
		where $\mathbb{G}$ denotes the Grassmann bundle of unoriented $(n-1)$-planes over the ambient boundary $\partial N$. 
		\end{prop}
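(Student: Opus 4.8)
The plan is to reduce the statement to two ingredients and then patch them together by a routine cut-off argument. The first ingredient is the smooth graphical convergence away from the finite bad set, which is already contained in \cite[Theorem 29]{ACS17}: since $M_k\to M$ locally smoothly and graphically with multiplicity $m$ on $M\setminus\mathcal{Y}$, the boundaries $\partial M_k$ likewise converge locally smoothly, as $m$ sheets, to $\partial M$ on $\partial M\setminus\mathcal{Y}$. Hence, for any $f\in C(\mathbb{G})$ whose base support is disjoint from $\mathcal{Y}\cap\partial N$ one has $\int_{\mathbb{G}}f\,d\mu_k\to\int_{\mathbb{G}}f\,d\mu$. The second ingredient, which is the only real work, is a uniform no-concentration estimate for the boundary measure of $M_k$ near each point of $\mathcal{Y}\cap\partial N$.

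For the latter, I would first invoke the interior and free boundary monotonicity formulae, which together with the area bound $\h^n(M_k)\le\Lambda$ provide constants $C=C(N,g)$ and $\rho_0=\rho_0(N,g)$ such that $\h^n(M_k\cap B_\rho(x))\le C\Lambda\rho^n$ for all $x\in N$ and $\rho\le\rho_0$. Next, fix $y\in\mathcal{Y}\cap\partial N$, extend the outward unit normal $\nu$ of $\partial N$ to a collar neighbourhood, and choose $\phi$ with $\phi\equiv1$ on $B_\delta(y)$, $\operatorname{supp}\phi\subset B_{2\delta}(y)$ and $|\nabla\phi|\le 2/\delta$. Testing the first variation identity for the minimal hypersurface $M_k$ against the compactly supported ambient field $X=\phi\,\nu$, and using the free boundary condition (so that the outward conormal of $\partial M_k$ in $M_k$ coincides with $\nu$ along $\partial M_k$), one obtains
\[
\h^{n-1}\big(\partial M_k\cap B_\delta(y)\big)\le\int_{\partial M_k}\phi\,d\h^{n-1}=\int_{M_k}\mathrm{div}_{M_k}(\phi\,\nu)\,d\h^n\le\Big(C\|\nabla\nu\|_{C^0}+\tfrac{2}{\delta}\Big)\h^n\big(M_k\cap B_{2\delta}(y)\big),
\]
so that, combining with the monotonicity bound, $\h^{n-1}(\partial M_k\cap B_\delta(y))\le C'\Lambda\,\delta^{n-1}$ with $C'=C'(N,g)$ independent of $k$, for all $\delta\le\rho_0/2$; the same bound holds trivially for the fixed smooth hypersurface $\partial M$.

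To conclude, given $f\in C(\mathbb{G})$ and $\varepsilon>0$, I would pick $\delta$ so small that $C'\Lambda(1+m)\,|\mathcal{Y}\cap\partial N|\,\delta^{n-1}\,\|f\|_{C^0}<\varepsilon/2$, choose $\chi\equiv0$ on $\bigcup_{y\in\mathcal{Y}\cap\partial N}B_\delta(y)$ and $\chi\equiv1$ off $\bigcup_{y\in\mathcal{Y}\cap\partial N}B_{2\delta}(y)$, and split $\int_{\mathbb{G}}f\,d\mu_k=\int_{\mathbb{G}}f\chi\,d\mu_k+\int_{\mathbb{G}}f(1-\chi)\,d\mu_k$. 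The first term converges to $\int_{\mathbb{G}}f\chi\,d\mu$ by the first ingredient, while the second is bounded in absolute value by $\|f\|_{C^0}\sum_{y}\h^{n-1}(\partial M_k\cap B_{2\delta}(y))<\varepsilon/2$ uniformly in $k$, and likewise for $\mu$. Letting $k\to\infty$ and then $\varepsilon\to0$ yields $\mu_k\to\mu$ as varifolds.

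The main obstacle I anticipate is the second ingredient near $\mathcal{Y}\cap\partial N$: one must ensure that the monotonicity formula is valid up to $\partial N$ and that the first variation identity carries the correct conormal term, which is exactly where the orthogonality condition, and (if one prefers to argue by doubling) the Lipschitz-metric reflection of \cite{ACS17}, enter the picture. A minor auxiliary point is that $\|\mathrm{div}_{M_k}\nu\|_{C^0}$ is bounded uniformly in $k$, which is immediate since $\nu$ is a fixed ambient vector field; everything else in the argument is soft.
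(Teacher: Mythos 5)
Your proposal is correct and follows essentially the same route as the paper: both proofs split into (i) convergence away from $\mathcal{Y}\cap\partial N$ coming directly from the locally smooth graphical convergence of \cite[Theorem 29]{ACS17}, and (ii) a uniform no-concentration estimate $\h^{n-1}(\partial M_k\cap B_\varrho(y))\le C\varrho^{n-1}$ near each bad boundary point, obtained by testing the first variation of $M_k$ against a cutoff times (an extension of) the outward unit normal of $\partial N$, using the free boundary condition to identify this with the outward conormal of $\partial M_k$ in $M_k$, and then controlling the resulting bulk term with the monotonicity formula. The only cosmetic difference is in the final area bound: the paper invokes the varifold convergence $M_k\to mM$ together with monotonicity, whereas you bound $\h^n(M_k\cap B_{2\delta}(y))\le C\Lambda\delta^n$ directly from monotonicity and the uniform area bound $\h^n(M_k)\le\Lambda$; both give the same $\delta^{n-1}$ estimate, and your version has the minor advantage of not needing to take $k$ large in that step.
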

		
		\begin{proof}
		The result is trivial if $\mathcal{Y}\cap\partial N=\emptyset$ (for in that case Theorem \ref{thm:quant} ensures smooth convergence, possibly multi-sheeted, of the boundaries), so let us assume on the contrary that this intersection is not empty. 
		We claim that for all $y\in \mathcal{Y}\cap \partial N$ 
			\begin{equation*}
			\lim_{\varrho \to 0} \lim_{k\to\infty}\int_{B_\varrho (y)\cap \partial N} d\mu_k = 0,
			\end{equation*}
			and in fact that there exists some constant $\sigma$, depending only on $(N,g)$, $M$  and $m$, such that for all $k$ sufficiently large 
			\begin{equation}\label{eq:o(2)}
			\int_{B_\varrho (y)\cap \partial N} d\mu_k \leq \sigma \varrho^{n-1}.
			\end{equation}
			Assuming the claim and given $f\in  C(\mathbb{G})$, let $\eta_\varrho \in C^{\infty}_c(N)$ be a smooth non-negative ambient function which equals one in all geodesic balls centered at a point of $\mathcal{Y}\cap \partial N$ and of radius $\varrho$, and is supported in the union of the geodesic balls with the same centers and radii $2\varrho$. Set $f_\varrho = (1-\eta_\varrho)f$, we can write by the triangle inequality 
			\[
			\left| \int_{\mathbb{G}} f\,d\mu_k -\int_{\mathbb{G}} f\,d\mu \right|\leq \left| \int_{\mathbb{G}} f\,d\mu_k -\int_{\mathbb{G}} f_\varrho\,d\mu_k \right|
			+\left| \int_{\mathbb{G}} f_\varrho\,d\mu_k -\int_{\mathbb{G}} f_\varrho\,d\mu \right|
			+\left| \int_{\mathbb{G}} f_\varrho\,d\mu -\int_{\mathbb{G}} f\,d\mu \right|.
			\]
			By \eqref{eq:o(2)} the first summand satisfies 
			\[\lim_{k\to \infty} \left| \int_{\mathbb{G}} f\,d\mu_k -\int_{\mathbb{G}} f_\varrho\,d\mu_k \right| \leq \sup |f|\cdot \sigma(2\varrho)^{n-1}. 
			\]
			By smooth graphical convergence away from $y\in\mathcal{Y}$, and for any $\varrho >0$, the second summand satisfies
			\[
			\lim_{k\to \infty}\left| \int_{\mathbb{G}} f_\varrho\,d\mu_k -\int_{\mathbb{G}} f_\varrho\,d\mu \right|\leq \lim_{k\to \infty} \sup |f|\cdot |[\mu_k-\mu](\partial N \setminus B_{\varrho}(y))| = 0.
			\]
			Since $\mu$ has finite mass (for $M$ is a smooth free boundary minimal surface) we have
			\[
			\lim_{\varrho\to 0}\left| \int_{\mathbb{G}} f_\varrho\,d\mu -\int_{\mathbb{G}} f\,d\mu \right|=0
			\]
				by Lebesgue's dominated convergence theorem.
		Combining these three simple facts, we obtain the varifold convergence of $\mu_k$ to $\mu$.
		
		\
			It remains to justify the claim \eqref{eq:o(2)}. Pick $y\in \mathcal{Y}\cap \partial N$ and for $\varrho$ sufficiently small choose a Fermi-coordinate neighborhood so that $B_{2\varrho} (y) \subset C_{3\varrho} (y)$ (where we are employing the notation presented at the beginning of Section \ref{sec:bubbling}). On this neighborhood, set $X_1 = -\partial_{x^1}$ and choose a non-negative $\phi\in C_c^\infty (B_{2\varrho} (y))$ so that $\phi \equiv 1$ on $B_\varrho (y)$ and $|\D \phi| \leq 4/\varrho$. Notice that on $C_{3\varrho}(y)$ there exists $\sigma_1>0$ (only depending on $(N,g)$) so that $|\D X_1 | \leq \sigma_1$. Set $X= \phi X_1$ and using the fact that $M_k$ is a properly embedded free boundary minimal hypersurface, so that the outward unit conormal $\nu_k$ of $M_k$ satisfies (in this neighborhood) $\nu_k = X_1$, we see that for all $k$ sufficiently large 
			\begin{align*}
			\int_{B_\varrho (y)\cap \partial N} d \mu_k &\leq \int_{B_{2\varrho}(y)\cap \partial M_k} \phi \,d\h^{n-1} = \int_{B_{2\varrho}(y)\cap \partial M_k} g(X, \nu_k)\, d\h^{n-1}\\ 
			&= \left| \int_{M_k\cap B_{2\varrho}(y)} div_{M_k}(X)\, d\h^n \right| \leq \left(2\sigma_1+\frac{8}{\varrho}\right)\int_{M_k\cap B_{2\varrho}(y)} d \h^n \leq \sigma \varrho^{n-1}
			\end{align*}
			where in the final step we have used that $M_k\to mM$ as varifolds as well as the monotonicity formulae for free boundary minimal hypersurfaces (see e.g. \cite[Corollary 16]{ACS17}), which holds even if $M$ fails to be properly embedded at $y$. This completes the proof.
	\end{proof}

\begin{cor}
	\label{cor:boundlength}
		Let $2\leq n\leq 6$ and $(N^{n+1},g)$ be a compact Riemannian manifold with boundary. Given $\Lambda, \mu\in \R_{\geq 0}$ and $p\in\N_{\geq 1}$ there exists a constant $C=C(p,\Lambda,\mu, N, g)$ such that $\h^{n-1}(\partial M)\leq C$ for any $M\in \mathfrak{M}_p(\Lambda,\mu)$.
		\end{cor}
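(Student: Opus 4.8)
The plan is to derive the bound directly from the first variation formula, so that no appeal to the compactness theory of \cite{ACS17} is needed; in fact the resulting constant will depend only on $(N,g)$ and $\Lambda$, not on $p$ or $\mu$. First I would fix, once and for all, a smooth vector field $X$ on $N$ whose restriction to $\partial N$ is the outward unit normal $\nu$ of $\partial N$; such an extension exists because $N$ is a smooth compact manifold with boundary, and by compactness $\sigma_0 := \sup_N |\nabla X|$ is finite and depends only on $(N,g)$ and the chosen extension. Correspondingly there is a constant $c_0 = c_0(N,g,X)$ with $|\operatorname{div}_M X| \leq c_0$ pointwise on any $n$-dimensional submanifold $M$, since $\operatorname{div}_M X$ is a sum of $n$ terms each controlled by $|\nabla X|$.

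Next, given any $M \in \mathfrak{M}_p(\Lambda,\mu)$, I would use that $M$ is a \emph{compact}, properly embedded, smooth free boundary minimal hypersurface (compactness follows from properness together with compactness of $N$). Because $M$ meets $\partial N$ orthogonally along $\partial M$, the outward unit conormal $\eta_M$ of $\partial M$ in $M$ coincides with $\nu$ there — this is exactly the identification already exploited in the proof of Proposition \ref{prop:boundaryconv}. Applying the divergence theorem on $M$ to $X$ and using that the mean curvature of $M$ vanishes yields
\[
\mathcal{H}^{n-1}(\partial M) = \int_{\partial M} \langle X, \eta_M\rangle \, d\mathcal{H}^{n-1} = \int_{M} \operatorname{div}_M X \, d\mathcal{H}^{n},
\]
and hence $\mathcal{H}^{n-1}(\partial M) \leq c_0 \, \mathcal{H}^n(M) \leq c_0 \Lambda$. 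Setting $C := c_0\Lambda$ (which one may of course harmlessly regard as a function of $p,\Lambda,\mu,N,g$) completes the argument.

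There is essentially no obstacle in this argument; the only points requiring a word of care are the sign convention in the normal–conormal identification (the same one as in Proposition \ref{prop:boundaryconv}) and the compactness of $M$. Unlike in Proposition \ref{prop:boundaryconv}, no cut-off functions and no monotonicity formula are needed here, precisely because we deal with an honest properly embedded hypersurface rather than a possibly improper limit. If one prefers, the same estimate may be extracted verbatim from the computation in the last display of the proof of Proposition \ref{prop:boundaryconv} by taking $\phi \equiv 1$ and letting the Fermi-coordinate patch be replaced by the global field $X$ above.
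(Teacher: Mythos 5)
Your argument is correct, and it is worth noting that it is both more elementary and quantitatively stronger than the route the paper has in mind. The corollary is stated in the paper as a consequence of Proposition \ref{prop:boundaryconv}: if the bound failed one would take a sequence $M_k\in\mathfrak{M}_p(\Lambda,\mu)$ with $\h^{n-1}(\partial M_k)\to\infty$, extract a converging subsequence via Theorem \ref{thm:quant}, and contradict the varifold convergence $\mu_k\to\mu$ (testing with $f\equiv 1$ gives mass convergence to $m\,\h^{n-1}(\partial M)<\infty$). That argument is a compactness-and-contradiction scheme, so the constant it produces is non-effective and genuinely depends on $p$ and $\mu$ through the compactness theorem of \cite{ACS17}. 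Your proof instead globalizes the very computation appearing in the last display of the proof of Proposition \ref{prop:boundaryconv}: extending the outward unit normal $\nu$ of $\partial N$ to a smooth field $X$ on all of $N$, using the free boundary condition to identify the outward conormal of $\partial M$ with $\nu$, and applying the first variation identity $\int_{\partial M}\langle X,\eta_M\rangle\,d\h^{n-1}=\int_M \operatorname{div}_M X\,d\h^n$ (valid since $H=0$ and $M\in\mathfrak{M}$ is compact), you get the explicit bound $\h^{n-1}(\partial M)\leq c_0\Lambda$ with $c_0=c_0(N,g)$ only. This removes the dependence on $p$, $\mu$ and on the dimension restriction $2\leq n\leq 6$, and requires neither the cut-off nor the monotonicity formula needed in the localized estimate \eqref{eq:o(2)} (those are needed there only because the limit $M$ may fail to be proper near $y$). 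The sign conventions you invoke match those of the paper ($X_1=-\partial_{x^1}=\nu$ and $\nu_k=X_1$ along $\partial M_k$ in Fermi coordinates), so there is nothing to repair.
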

	
		Now, let us specify this result to the case $n=2$: if $M\subset N$ is a free boundary minimal surface (with respect to $\partial N$) we can regard its geodesic curvature (namely: the geodesic curvature of $\partial M$ as a subset of $M$) as a well-defined function on the line bundle $\mathbb{G}$. 
		
		 \begin{rmk}\label{rem:frame}
		 	When $M\subset N$ is a free boundary minimal surface we shall denote by $\left\{\tau,\nu,\epsilon \right\}$ a local orthonormal frame at any boundary point, so that $\tau$ is tangent, $\nu$ is outward-pointing and conormal, and $\epsilon$ is normal to $M\subset N$. Furthermore, notice that $\left\{\tau,\epsilon\right\}$ can be extended to a local tangent frame for the ambient boundary $\partial N$.	
		 \end{rmk}	
	\noindent With that notation, recall that
		\[
		\kappa=g(D_{\tau}\nu,\tau)=-II(\tau,\tau)
		\]	
		where $D$ denotes the Levi-Civita connection on $N$, $\tau$ is a tangent vector to $\partial M\subset\partial N$ at the point in question, and the last equality relies on our sign conventions concerning the second fundamental form $II$ of $\partial N\subset N$ (which are consistent with \cite{ACS17}).  Thus, considering a smooth extension of this function (which is a priori only defined on the subbundle whose base is $\partial M\subset\partial N$), Proposition \ref{prop:boundaryconv} has the following geometric consequence:
		
		\begin{cor}\label{cor:geodcurv}
		In the setting of Theorem \ref{thm:quant} specified to ambient dimension three, we have	
		\[
		\lim_{k\to\infty}\int_{\partial M_k} \kappa(M_k)\,d\h^1 = m\int_{\partial M} \kappa(M)\,d\h^1.
		\]
		\end{cor}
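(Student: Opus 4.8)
The plan is to recognize the two integrals appearing in the statement as integrals of one and the same continuous function over the Grassmann bundle $\mathbb{G}$ of unoriented lines over $\partial N$, and then to simply feed this function into the varifold convergence $\mu_k\to\mu$ established in Proposition \ref{prop:boundaryconv}.

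First I would make the integrand precise. As recalled just before the statement, the geodesic curvature of $\partial M'$ inside a free boundary minimal surface $M'$ is given pointwise by $-II(\tau,\tau)$, where $II$ is the second fundamental form of $\partial N\subset N$ and $\tau$ is a unit tangent vector to $\partial M'$. Since $II$ is a smooth symmetric two-tensor defined on \emph{all} of $T\partial N$ and the quantity $-II(e,e)$ is unchanged under $e\mapsto -e$, the assignment $(z,\ell)\mapsto -II_z(e_\ell,e_\ell)$ (for $e_\ell$ a unit generator of the line $\ell\subset T_z\partial N$) defines a smooth, in particular continuous, function $\kappa$ on $\mathbb{G}$; this is precisely the smooth extension of the geodesic curvature function alluded to above. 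By construction, evaluating $\kappa$ along the tangent line of $\partial M_k$ (respectively of $\partial M$) returns $\kappa(M_k)$ (respectively $\kappa(M)$).

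Next I would record the elementary bookkeeping identity relating the boundary integrals to the integrals against the varifolds $\mu_k,\mu$ of Proposition \ref{prop:boundaryconv}. Since $\mu_k$ is the unit-multiplicity varifold lift of $\partial M_k$ and $\mu$ is the multiplicity-$m$ varifold lift of $\partial M$, testing against $f=\kappa$ gives
\[
\int_{\mathbb{G}}\kappa\,d\mu_k=\int_{\partial M_k}\kappa(M_k)\,d\h^1,\qquad \int_{\mathbb{G}}\kappa\,d\mu=m\int_{\partial M}\kappa(M)\,d\h^1.
\]
Then applying Proposition \ref{prop:boundaryconv} with the test function $\kappa\in C(\mathbb{G})$ yields $\int_{\mathbb{G}}\kappa\,d\mu_k\to\int_{\mathbb{G}}\kappa\,d\mu$ as $k\to\infty$, which is exactly the asserted limit.

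The only genuinely substantive input has already been carried out — namely the varifold convergence of the boundaries in Proposition \ref{prop:boundaryconv}, which in turn rests on the $\varrho^{n-1}$ upper bound \eqref{eq:o(2)} for the boundary mass near the bad points together with smooth graphical convergence away from $\mathcal{Y}$. Consequently I do not expect any real obstacle at this stage; the single point requiring a little care is conceptual rather than technical, namely that the geodesic curvature must be interpreted as a fixed continuous function on $\mathbb{G}$ intrinsic to $(N,\partial N,g)$ — that is, as $-II(\cdot,\cdot)$ — rather than as data attached to the varying surfaces $M_k$, so that one and the same test function serves the whole sequence.
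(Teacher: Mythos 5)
Your proposal is correct and follows exactly the paper's argument: the geodesic curvature along $\partial M_k$ is identified, via the free boundary condition, with the restriction of the fixed continuous function $-II(\cdot,\cdot)$ on the Grassmann bundle $\mathbb{G}$ (well defined on unoriented lines by evenness), and the conclusion is then an immediate application of the varifold convergence $\mu_k\to\mu$ of Proposition \ref{prop:boundaryconv} with this single test function. No gaps.
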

		
		Hence, combining the quantization identity with the Gauss-Bonnet theorem, Lemma \ref{lem:euler} and Corollary \ref{cor:geodcurv} (which ensures that the geodesic boundary terms cancel out) we obtain a proof of Corollary \ref{cor:chi} stated in the introduction: the equation
		\begin{equation*}
		\chi(M_k)= m\chi(M)+ \sum_{j=1}^J (\chi(\Sigma_j)-b_j),
		\end{equation*}
 holds for all $k$ sufficiently large.

		We now prove our three geometric results, Theorem \ref{thm:conv1}, Theorem \ref{thm:conv2} and Theorem \ref{thm:lsc}.

		\begin{proof}[Proof of Theorem \ref{thm:conv1}] Let $\left\{M_k\right\}$ be a sequence of embedded, free boundary minimal surfaces in $(N,g)$. In either case we are considering, Theorem 2 in \cite{ACS17} applies to provide subsequential convergence (smooth away from at most finitely many points).
		In the stable case, there is no bubbling and thus (possibly extracting a subsequence, which we shall not rename), we get 
		\begin{equation}\label{eq:chi0}
		\chi(M_k)=m\chi(M)
		\end{equation}
		for $k$ large enough. Now, if the convergence happened with multiplicity $m\geq 2$ then (by virtue of Lemma \ref{lem:stable-type}) the limit $M$ would be a disk and thus the right-hand side would be greater or equal than $2$ but on the other hand the Euler characteristic of any connected surface with boundary is at most one (with equality only in the case of the disk). Thus, equation \eqref{eq:chi0} gives a contradiction unless the convergence is smooth with multiplicity one.
		
		Let us now discuss the index one case instead. The case when the convergence is smooth, but with multiplicity, is dealt with as we did for the stable case. Else, let us consider the case when bubbling occurs: by Theorem \ref{thm:quant} there would be a non-trivial bubble (or a non-trivial half-bubble) and by the index estimate given by Lemma 12 in \cite{ABCS18} (which also holds here as a direct consequence of Theorem \ref{thm:bubbles}), it must have index at most one. Therefore, our classification results (specifically: Corollary \ref{cor:indexone} for half-bubbles) imply that the surface in question is either a catenoid or a vertically cut half-catenoid. Again, the limit surface must be a free boundary stable minimal disk. We now use this information in the identity given in Corollary \ref{cor:chi} and get the bounds
		\[
		\chi(M_k)\geq 0 \ \ \text{or} \ \ \chi(M_k)\geq 1
		\]
		depending on whether the first or the second alternative occurs, respectively. Hence we get strong multiplicity one convergence provided we assume $\chi(M_k)<0$ for all $k$.

		Let us now examine the cases when the Euler characteristic of the surfaces in our sequence equals zero or one. The smooth convergence of the local rescalings, as described by Theorem \ref{thm:bubbles} implies (by the multiplicity estimate given by Proposition 13 in \cite{ABCS18}, which can be transplanted here with the very same proof) that $m\leq 2$ and hence $m=2$.
	If we apply Corollary \ref{cor:chi}, we see at once that for $k\in\mathbb{N}$ sufficiently large $\chi(M_k)=0$ implies the presence of a catenoidal bubble, and instead $\chi(M_k)=1$ implies a vertically cut catenoidal half-bubble. Yet, we claim that the former case is only possible when one considers surfaces having the topological type of an annulus, but not in the case of M\"obius bands. Let us see why.
	 Eventually each surface $M_k$ shall lie in a given tubular neighborhood of the limit surface $M$, which is a disk (hence two-sided, cf. Lemma \ref{lem:stable-type}). Such a tubular neighborhood is then diffeomorphic to the product of the disk times an open interval $I$, hence it cannot contain any proper one-sided surface with boundary on $\partial D\times I$ and thus in particular it cannot contain any free boundary minimal M\"obius band.

		Let us then justify the final assertion in the statement of the theorem. Under the topological assumption that $N$ be simply connected, we know that all free boundary minimal surfaces it contains must be two-sided (hence orientable themselves). It follows that Lemma \ref{lem:area} applies to provide area estimates, and at that stage one can just follow the argument above.
	\end{proof}

    	\begin{proof} [Proof of Theorem \ref{thm:conv2}]
    	Possibly by extracting a subsequence, which we shall not rename, we can assume (which will be always implicit in the sequel of this proof) that
    	\[
    	\lim_{k\to\infty}\h^2(M_k)> \frac{8 \pi}{\varrho}
    	\]
    	or, respectively,
    	\[
    	\lim_{k\to\infty}\h^1(\partial M_k)>\frac{4\pi}{\sigma}
    	\]
    	and, like in the proof of Theorem \ref{thm:conv1}, that the sequence $M_k$ converges to some limit minimal surface $M$ as described by Theorem \ref{thm:quant}.
   
  Arguing by contradiction, thus assuming that the convergence is not smooth with multiplicity one,  by Theorem \ref{thm:quant} and Lemma 12 in \cite{ABCS18} there can be at most one non-trivial bubble or a non-trivial half-bubble, and that must be a catenoid or a half-catenoid. If the convergence is smooth but there is no bubbling then $m=2$ (we have stability when passing to the double cover) hence, thanks to Lemma \ref{lem:stable-type} we get
    	\begin{equation}\label{eq:areaexc}
    	\lim_{k\to\infty}\h^2(M_k)\leq \frac{8 \pi}{\varrho}
    	\end{equation}
    	or, respectively,
    	\begin{equation}\label{eq:lengthexc}
    	\lim_{k\to\infty}\h^1(\partial M_k)\leq \frac{4\pi}{\sigma}.
    	\end{equation}
    	If instead bubbling occurs, the characterization of a two-sided disk as limit surface (cf. Lemma \ref{lem:stable-type}) and of the catenoid or vertically cut half-catenoid as blow-ups, allow to employ the multiplicity estimate
    	(Proposition 13 in \cite{ABCS18})
    	to gain $m\leq 2$, hence necessarily $m=2$.
    	By varifold convergence (at the interior, see \cite{ACS17} and at the boundary, by Proposition \ref{prop:boundaryconv}) we get the same area/length bounds as the two cases above, namely \eqref{eq:areaexc} or \eqref{eq:lengthexc} respectively.
    This is incompatible with our postulated area/length bounds, so it must be $m=1$.	
    			\end{proof}

    			\begin{proof}[Proof of Theorem \ref{thm:lsc}]
    		    The inequality follows by combining Corollary \ref{cor:chi} with the multiplicity estimate, Proposition 13 in \cite{ABCS18}.
    				In case equality occurs, in particular we must have equality in the multiplicity estimate so
    				\[
    				m=1+\sum_{j=1}^J(b_j-1)
    				\]
    				and the properness assumption $(\textbf{P})$ implies (once again, see Subsection \ref{subs:geomhalf} ) that $b_j=\check{b}_j\geq 2$, where the last inequality relies on the fact that the plane is the only complete, embedded minimal surface in $\R^3$ of finite total curvature having one end (cf. \cite{Sc83}, but this would also follow from the half-space theorem of Hoffman and Meeks \cite{HM90}). Hence there are at most $m-1$ summands, each corresponding either to a non-trivial bubble or to a non-trivial half-bubble. If there are exactly $m-1$ summands then $b_j=\check{b}_j=2$ for each $j=1,\ldots, J$ and the conclusion comes again from the characterization of the catenoid as the only complete, embedded minimal surface in $\R^3$ with exactly two ends, also contained in \cite{Sc83}.
    			\end{proof}	
    
    	\begin{rmk}
    		If one drops assumption (\textbf{P}), thereby allowing improper free boundary minimal surfaces, then the equality case might (at least in principle) allow for a larger number of half-bubbles, by virtue of the potential presence of horizontally cut half-catenoids (cf. Corollary \ref{cor:oneend}).
    	\end{rmk}

    \appendix
    
    \section{Area estimates}\label{sec:area}
    
    The next lemma collects some known estimates for free boundary minimal surfaces of index zero or one:
    
    \begin{lem}\label{lem:stable}
    	Let $(N^3,g)$ be a compact Riemannian manifold, with non-empty boundary $\partial N$ and let $M\subset N$ be a properly embedded, two-sided, free boundary minimal surface. Let $\varrho:=\inf R$ (where $R$ denotes the scalar curvature of $(N,g)$) and $\sigma:=\inf  H$ (where $H$ denotes the mean curvature of $\partial N\subset N$), both assumed to be non-negative numbers.
    	\begin{enumerate}
    	\item{If $M$ is stable, then
    		\[
    		\frac{\varrho}{2}\h^2(M)+\sigma\h^1(\partial M)\leq 2\pi\chi(M)\leq 2\pi.
    		\]}	
    	\item{If $M$ has index one and is orientable, then
    		\[
    		\frac{\varrho}{2}\h^2(M)+\sigma\h^1(\partial M)\leq 2\pi (8-\text{boundaries}(M))\leq 16\pi.
    		\]}	
    	\end{enumerate}	
    \end{lem}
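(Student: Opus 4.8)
The plan is to deduce both inequalities from the stability/second–variation inequality of $M$, combined with the Gauss equation (valid because the ambient dimension is three) and the Gauss--Bonnet theorem; part (1) is the direct argument and part (2) its Ros–type refinement using a balanced family of conformal test functions.

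\medskip
\noindent\emph{Part (1).} Since $M$ is compact, two-sided and stable, its Jacobi quadratic form $Q$ is non-negative on every $\varphi\in C^\infty(M)$, no boundary condition being imposed; I would test it with the constant $\varphi\equiv 1$. Expanding $0\le Q(1,1)$ in the orthonormal frame $\{\tau,\nu,\epsilon\}$ of Remark~\ref{rem:frame} (conventions as in \cite{ACS17}) gives that $\int_M\bigl(|A|^2+\overline{\mathrm{Ric}}(\epsilon,\epsilon)\bigr)\,d\h^2$ plus a boundary integral built from the second fundamental form of $\partial N\subset N$ is $\le 0$, where $\overline{\mathrm{Ric}}$ is the Ricci tensor of $(N,g)$. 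Next I would rewrite the interior integrand, using the Gauss equation together with the minimality of $M$, as
\[
|A|^2+\overline{\mathrm{Ric}}(\epsilon,\epsilon)=\tfrac12\bigl(|A|^2+R\bigr)-K_M ,
\]
with $R$ the scalar curvature of $(N,g)$ and $K_M$ the Gaussian curvature of $M$; and rewrite the boundary integrand in terms of the mean curvature $H$ of $\partial N$ and the geodesic curvature $\kappa$ of $\partial M\subset M$ — possible because $\{\tau,\epsilon\}$ is an orthonormal frame for $T(\partial N)$ along $\partial M$ (Remark~\ref{rem:frame}). Substituting and invoking Gauss--Bonnet $\int_M K_M\,d\h^2+\int_{\partial M}\kappa\,d\h^1=2\pi\chi(M)$, which exactly cancels the term $\int_M K_M$ against the geodesic-curvature part of the boundary contribution, leaves
\[
\tfrac12\int_M|A|^2\,d\h^2+\tfrac12\int_M R\,d\h^2+\int_{\partial M}H\,d\h^1\ \le\ 2\pi\chi(M).
\]
Discarding the nonnegative term $\tfrac12|A|^2$, bounding $R\ge\varrho$ and $H\ge\sigma$, and finally using $\chi(M)\le 1$ for any connected surface with non-empty boundary, gives the two assertions of part (1).

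\medskip
\noindent\emph{Part (2).} Here $1$ is no longer an admissible competitor and I would argue à la Ros. Let $u_1>0$ span the negative eigenspace of $Q$, which is one-dimensional since $\mathrm{index}(M)=1$. Choose a proper branched conformal map $f=(f_1,f_2,f_3)$ of the bordered Riemann surface $(M,\partial M)$ onto a round hemisphere of $S^2\subset\R^3$, so that $\sum_i f_i^2\equiv 1$ on $M$ and the $f_i$ restrict on $\partial M$ to functions supported by the equatorial circle; the degree $d$ of such a map is controlled in terms of $\mathrm{genus}(M)$ and $\mathrm{boundaries}(M)$ (the classical Ahlfors/Gabard bound, which can also be obtained from the double of $M$). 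Using a Hersch-type balancing lemma, pre-compose $f$ with a Möbius transformation of $S^2$ so that $\int_M f_i\,u_1\,d\h^2=0$ for each $i$; then each $f_i$ is $L^2$-orthogonal to the unique negative direction of $Q$, hence $Q(f_i,f_i)\ge 0$. Summing over $i$, computing $\sum_i\int_M|\nabla f_i|^2$ from conformality and the degree, using $\sum_i f_i^2\equiv 1$ in the two curvature terms, and then running the same Gauss-equation/Gauss--Bonnet manipulation as in part (1), one is led (after inserting the degree bound and $\chi(M)=2-2\,\mathrm{genus}(M)-\mathrm{boundaries}(M)$) to
\[
\tfrac\varrho2\h^2(M)+\sigma\h^1(\partial M)\ \le\ 2\pi\bigl(8-\mathrm{boundaries}(M)\bigr),
\]
which is $\le 16\pi$ since $\mathrm{boundaries}(M)\ge 0$. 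This is a known estimate, and the computation may be quoted from, or adapted from, the literature (cf.\ \cite{ABCS18,ACS18b} and the references therein).

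\medskip
The main obstacle lies entirely in part (2): part (1) is routine once the frame and sign conventions are fixed and the cancellation with Gauss--Bonnet is observed. In part (2) the delicate points are (a) setting up the conformal test functions in the \emph{free-boundary} setting so that $\sum_i f_i^2\equiv 1$ holds in the interior of $M$ — which is why one maps to a hemisphere rather than to a disk, and which must be reconciled with the boundary orthogonality of $f$ — (b) performing the balancing against the first eigenfunction $u_1$ rather than against the constant, so that the hypothesis $\mathrm{index}(M)=1$ can be brought to bear, and (c) tracking the boundary contribution together with the sharp degree bound through the Gauss--Bonnet bookkeeping in order to land exactly on the stated constant $2\pi\bigl(8-\mathrm{boundaries}(M)\bigr)$.
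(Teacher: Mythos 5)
Part (1) of your proposal is correct and is exactly the paper's argument: test the second variation form with $\varphi\equiv 1$, apply the Schoen--Yau rearrangement via the Gauss equation, convert the boundary term using the free boundary condition, and invoke Gauss--Bonnet to arrive at $\tfrac12\int_M(R+|A|^2)+\int_{\partial M}H\le 2\pi\chi(M)$.

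Part (2) contains a genuine gap in the construction of the test maps. You propose to use a \emph{proper} branched conformal map $f:(M,\partial M)\to (S^2_+,\partial S^2_+)$ onto a hemisphere, with degree controlled by the Ahlfors/Gabard bound (which is $d\le \gamma+r$, with $\gamma$ the genus and $r$ the number of boundary components). Two things go wrong. First, the Hersch balancing step composes $f$ with M\"obius transformations of $S^2$, and these do \emph{not} preserve the hemisphere; after balancing, the image of $M$ is an arbitrary spherical cap, whose area (counted with multiplicity) can be anything up to $4\pi d$, so the only uniform energy bound available is $\sum_i\int_M|\nabla f_i|^2\le 8\pi d$, not $4\pi d$. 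Second, even granting the optimistic energy $4\pi d$ with $d\le\gamma+r$, the bookkeeping gives $4\pi(\gamma+r)+2\pi(2-2\gamma-r)=2\pi(2+r)$, which is \emph{not} the claimed $2\pi(8-r)$: it exceeds it for $r\ge 4$ and exceeds $16\pi$ for $r\ge 7$; with the honest bound $8\pi d$ one gets $2\pi(2\gamma+3r+2)$, which is unbounded in the topology. The paper avoids both problems by a different construction: cap each boundary component of $M$ with a disk to obtain a \emph{closed} orientable surface $\overline{M}$ of the same genus $\gamma$, take a degree-$\le\lfloor(\gamma+3)/2\rfloor$ holomorphic map $\overline{\phi}:\overline{M}\to S^2$ furnished by Riemann--Roch (Brill--Noether), balance it against $\theta_1$ by M\"obius transformations of the full sphere (harmless, since the target is all of $S^2$), and then bound $\sum_i\int_M|\nabla\overline{\phi}_i|^2\le\sum_i\int_{\overline{M}}|\nabla\overline{\phi}_i|^2=8\pi\deg(\overline{\phi})$ by conformal invariance of the energy on the closed surface. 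Splitting into $\gamma$ even/odd then yields exactly $2\pi(6-r)$ resp. $2\pi(8-r)$. You should replace your Ahlfors-map construction with this capping argument (or otherwise justify a degree and energy bound that actually produces the stated constant).
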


    \begin{proof}
    	For part (1), the well-known Schoen-Yau rearrangement trick, the Gauss-Bonnet theorem and the free boundary condition allow to derive from the stability inequality the estimate
    		\begin{equation}\label{eq:STAB}
    		\frac{1}{2}\int_{M}(R+|A|^2)\,d\h^2+\int_{\partial M}H\,d\h^1\leq 2\pi\chi(M)
    		\end{equation}
    	where $R$ indicates the scalar curvature of $(N,g)$,  $A$ is the second fundamental form of $M\subset N$ and $H$ is the (ambient) mean curvature of $\partial N\subset N$. However, 
    	\begin{equation}\label{eq:chiFBMS}
    	\chi(M)=
    	\begin{cases}
    	2-2genus(M)-boundaries(M) & \text{if} \ M \ \text{is orientable}; \\
    	1-genus(M)-boundaries(M) & \text{if} \ M \ \text{is not orientable}
    	\end{cases}
    	\end{equation}
    	and thereby the conclusion follows at once.

    	  The proof of part (2) relies, instead, on the well-known \textsl{Hersch trick}. For completeness, we are going to outline the argument here.
    Throughout this proof, we let $\gamma$ denote the genus of $M$ and $r$ the number of its boundary components, furthermore let $\lambda_1<0$ be the first eigenvalue of the Jacobi operator 
    \[
    L\theta=\Delta_{M}\theta + (Ric(\epsilon,\epsilon)+|A|^2)\theta
    \]
     and $\theta_1>0$ an associated positive eigenfunction, so that
    	\[
    	\begin{cases}
    	L\theta_1=-\lambda_1\theta_1 & \text{on} \ M;\\
   \hspace{1mm}	\frac{\partial\theta_1}{\partial\nu} =-II(\epsilon,\epsilon)\theta_1 & \text{on} \ M.
    	\end{cases}
    	\]
    	By the index one assumption, considered the stability form
    	\[
    	Q(\theta,\theta)= \int_{M} \left(|\nabla \theta|^2 - (Ric_N(\epsilon,\epsilon) + |A|^2)\theta^2 \right)\id\h^{n} + \int_{\partial M}{\rm{II}}(\epsilon,\epsilon)\theta^2 \id \h^{n-1}
    	\]
    	 we have that $Q(\phi,\phi)\geq 0$ for all $\phi\in C^{\infty}(M)$ satisfying $\int_M \phi ~\theta_1\,d\h^2=0$. In order to choose appropriate test functions, we follow the approach suggested, to a somewhat different purpose, by Ros and Vergasta in \cite{RV95} (see also \cite{CFP14}, Theorem 5.1). Let us cap each boundary component of $M$ with a closed disk and consider a smooth extension of the metric to obtain $(\overline{M},\overline{g})$ a closed orientable surface of genus $\gamma$: by Riemann-Roch there exists a holomorphic map $\overline{\phi}:(\overline{M},\overline{g})\to(S^2,g_0)$ of degree bounded from above by $\floor{\frac{\gamma+3}{2}}$, and possibly by composing with a conformal diffeomorphism of the two-sphere with its round metric $g_0$ we can assume that indeed each of its components satisfies the orthogonality relation $\int_M \overline{\phi}_i \theta_1\,d\h^2=0$ for $i=1,2,3$ where we are identifying such $S^2$ with the unit sphere in $\R^3$. As a result, considering the stability inequalities for each of these three functions and adding them we get
    	\[
    	\frac{1}{2}\int_{M}(R+|A|^2)\,d\h^2+\int_{\partial M}H\,d\h^1\leq \sum_{i=1,2,3} \int_M |\nabla\overline{\phi}_i|^2\,d\h^2+ 2\pi\chi(M)
    	\]
    	where we have used the fact that $\overline{\phi}^2_1+\overline{\phi}^2_2+\overline{\phi}^2_3=1$. At that stage, we can estimate the first summand on the right-hand side in terms of the degree of $\overline{\phi}$:
    	\[
    	\sum_{i=1,2,3} \int_M |\nabla\overline{\phi}_i|^2\,d\h^2\leq \sum_{i=1,2,3} \int_{\overline{M}} |\nabla\overline{\phi}_i|^2\,d\h^2=\text{deg}(\overline{\phi})\sum_{i=1,2,3}\int_{S^2}|(d x_i)^T|^2\,d\h^2=8\pi \text{deg}(\overline{\phi})
    	\]
    	where $(d x_i)^T$ denotes the component of the Euclidean one-form $dx_i$ that is tangent to round $S^2$; hence we derive
    	\[
    	\frac{1}{2}\int_{M}(R+|A|^2)\,d\h^2+\int_{\partial M}H\,d\h^1\leq 8\pi \text{deg}(\overline{\phi})+2\pi\chi(M).
    	\] Expressing the Euler characteristic and the degree of this map in terms of the topological data of $M$ we get
    	\[
    	\frac{\varrho}{2} \h^2(M)+\sigma \h^1(\partial M)\leq 2\pi \cdot
    	\begin{cases}
    	4(k+1)+2-2(2k)-r & = 6 -r  \ \ \ \ \text{if} \ \gamma=2k\\
    	4(k+1)+2-2(2k-1)-r & = 8 -r  \ \ \ \ \text{if} \ \gamma=2k-1\\
    	\end{cases}
    	\]
    	hence in particular the left-hand side is always bounded from above by $16\pi$, which completes the proof.
    \end{proof}	
    
     Also, we will employ the following area bounds for index one free boundary minimal surfaces.
     
     \begin{lem}\label{lem:area}
     	Let $(N^3,g)$ be a compact Riemannian manifold, with non-empty boundary $\partial N$ and let $M\subset N$ be a connected, properly embedded, two-sided and orientable free boundary minimal surface of index 0 or 1.
     	Assume that: 
     	\begin{itemize} 
     		\item{\underline{\textsl{either}} the scalar curvature of $(N,g)$ is positive and $\partial N$ is mean convex;}
     		\item{\underline{\textsl{or}} the scalar curvature of $(N,g)$ is non-negative, $\partial N$ is strictly mean convex and there is no \emph{closed} minimal surface in $N$.}
     	\end{itemize}
     	Then there is an upper bound of the area of $M$ which only depends on the manifold $(N,g)$.
     \end{lem}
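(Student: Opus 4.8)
The plan is to reduce everything to the stability and index inequalities recorded in Lemma~\ref{lem:stable}, supplementing them in the borderline second case by a compactness argument. Suppose first that $R>0$ on $N$ and $\partial N$ is mean convex. Since $N$ is compact, $\varrho:=\inf_N R>0$, and $\sigma:=\inf_{\partial N}H\geq 0$. If $M$ has index $0$ then part~(1) of Lemma~\ref{lem:stable} gives $\tfrac{\varrho}{2}\h^2(M)\leq\tfrac{\varrho}{2}\h^2(M)+\sigma\h^1(\partial M)\leq 2\pi$, while if $M$ has index $1$ then part~(2) gives $\tfrac{\varrho}{2}\h^2(M)\leq 16\pi$. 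Either way $\h^2(M)\leq 32\pi/\varrho$, a bound depending only on $(N,g)$, and this case is complete.

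Now suppose $R\geq 0$ on $N$, that $\partial N$ is strictly mean convex, and that $N$ contains no closed minimal surface; put $\sigma:=\inf_{\partial N}H>0$. As $\varrho:=\inf_N R$ may now vanish, Lemma~\ref{lem:stable} only yields the boundary length bound $\h^1(\partial M)\leq 16\pi/\sigma$; however, the Hersch-type argument in its proof in fact shows $\tfrac12\int_M (R+|A|^2)+\int_{\partial M}H\leq 16\pi$, so under $R\geq 0$ and $H\geq 0$ we also obtain the total curvature bound $\int_M|A|^2\leq 32\pi$. I would then argue by contradiction: if the areas were not uniformly bounded, choose a sequence $\{M_k\}$ of such surfaces with $\h^2(M_k)\to\infty$, so that $\h^1(\partial M_k)\leq 16\pi/\sigma$ and $\int_{M_k}|A_k|^2\leq 32\pi$ for all $k$. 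The uniform bound on the total curvature confines curvature concentration to at most $32\pi/\varepsilon_0$ points of $N$ (with $\varepsilon_0$ the Schoen--Simon $\varepsilon$-regularity threshold), and away from those points the interior and free boundary curvature estimates provide uniform higher-order bounds; passing to a subsequence, $\{M_k\}$ converges, smoothly and with locally finite multiplicity on the complement of a finite set, to a minimal lamination $\mathcal L$ of $N$ whose leaves are properly embedded and meet $\partial N$ orthogonally along their boundaries. Since $\h^2(M_k)\to\infty$ whereas the convergence is locally smooth with locally bounded multiplicity, $\mathcal L$ cannot be a single compact surface, hence it contains a limit leaf $L$, which is necessarily complete and stable.

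The main obstacle is to contradict the existence of $\mathcal L$. If $L$ has empty boundary it is a closed minimal surface in $N$, contradicting the hypothesis. Otherwise the limit leaves all reach $\partial N$, and here I would exploit the strict mean convexity: in the absence of closed minimal surfaces, the surfaces obtained by flowing $\partial N$ inward by mean curvature foliate $N$ by strictly mean convex slices and become extinct in finite time, and running this mean-convex sweepout against the leaves of $\mathcal L$ (a Frankel-type maximum principle argument in which $\partial N$, having $H>0$, acts as a strict barrier) forces $\mathcal L=\emptyset$; the resulting contradiction then yields the claimed uniform area bound. I expect this last step --- equivalently, the non-existence of a non-trivial bounded-curvature minimal lamination under these assumptions, together with the extraction of such a lamination from a family of bounded total curvature but \emph{a priori} unbounded area --- to be the genuinely delicate point, everything else being a routine combination of Lemma~\ref{lem:stable} with standard elliptic estimates.
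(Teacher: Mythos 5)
Your first case is exactly the paper's argument and is fine: with $\varrho=\inf_N R>0$, parts (1) and (2) of Lemma~\ref{lem:stable} give $\h^2(M)\leq 4\pi/\varrho$ (stable) and $\h^2(M)\leq 32\pi/\varrho$ (index one). The divergence, and the gap, is in the second case. The paper disposes of it in one line: Lemma~\ref{lem:stable} gives $\h^1(\partial M)\leq 2\pi/\sigma$ (resp.\ $16\pi/\sigma$), and then one invokes White's isoperimetric inequality (Theorem 2.1 of \cite{Whi09}), which says precisely that a compact manifold containing no closed minimal surface admits a bound $\h^2(M)\leq c(N,g)\,\h^1(\partial M)$ valid for all minimal surfaces $M$. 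The hypothesis ``no closed minimal surface in $N$'' is there exactly to feed into that theorem. Your proposal instead tries to re-derive this implication from scratch, and the sketch you give does not close.

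Concretely, two steps fail as stated. First, after extracting a lamination limit $\mathcal{L}$ from a sequence with $\h^2(M_k)\to\infty$ and bounded total curvature, you claim that a limit leaf $L$ with empty boundary is a \emph{closed} minimal surface. A leaf of a minimal lamination without boundary need not be compact: it can be a complete, non-proper leaf (dense in a sublamination, spiralling, etc.), and then it produces no contradiction with the hypothesis, which only excludes closed minimal surfaces. Upgrading a non-compact stable leaf to a closed minimal surface (e.g.\ via the structure of the closure of the leaf) is a genuine additional argument that you have not supplied. Second, the mean curvature flow step is not justified: the flow of $\partial N$ inward will in general develop singularities, need not foliate $N$ by smooth strictly mean convex slices, and the claimed finite-time extinction under the stated hypotheses is itself a nontrivial theorem, not an observation. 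Since the entire burden of the second case rests on these two steps, the proof is incomplete there; replacing them by the citation of \cite{Whi09} (which is where the hypothesis on closed minimal surfaces is designed to be used) is both the intended and the efficient route.
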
	
     
     \begin{proof}
     Consider first the case of stable surfaces. Under the first assumption we get from Lemma \ref{lem:stable} that the area of $M$ is bounded from above by $4\pi/\varrho$. Under the second assumption, we get that the length of $\partial M$ is bounded from above by $2\pi/\sigma$, and then the conclusion comes 	
     by invoking the isoperimetric inequality by White, see Theorem 2.1 in \cite{Whi09}. The same argument applies to index one surfaces.
     \end{proof}

    In studying those free boundary minimal surfaces arising as higher-multiplicity limits we shall need the following classification result, which is the free boundary analogue of Lemma 14 in \cite{ABCS18}.

    \begin{lem}\label{lem:stable-type}
    	Let $(N^3,g)$ be a compact Riemannian manifold, with non-empty boundary $\partial N$ and let $M\subset N$ be a properly embedded, free boundary minimal surface arising as the limit (in the sense of smooth convergence with multiplicity $m\geq 2$ away from a finite set $\mathcal{Y}$ of points) of a sequence of embedded, free boundary minimal surfaces in $(N,g)$. 
    	Assume that: 
    	\begin{itemize} 
    		\item{\underline{\textsl{either}} the scalar curvature of $(N,g)$ is positive and $\partial N$ is mean convex;}
    		\item{\underline{\textsl{or}} the scalar curvature of $(N,g)$ is non-negative and $\partial N$ is strictly mean convex.}
    	\end{itemize}
    	Then $M$ is two-sided and diffeomorphic to a disk $D^2$ satisfying the geometric bound
    	\[
    	\frac{\varrho}{2}\h^2(M)+\sigma\h^1(\partial M)\leq 2\pi.
    	\]
    \end{lem}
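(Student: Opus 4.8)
\emph{Proof proposal.} The plan is to follow the scheme of \cite[Lemma 14]{ABCS18}, replacing the Schoen--Yau classification of closed stable minimal surfaces by Lemma \ref{lem:stable}(1) (and the Schoen--Yau rearrangement/Gauss--Bonnet argument behind it), and paying careful attention to the boundary. The core is an intermediate statement that I would isolate first: \emph{any compact, connected, two-sided free boundary minimal surface $\Sigma$ arising as a smooth limit with multiplicity $m\ge 2$, away from a finite set, of a sequence of embedded free boundary minimal surfaces $\Sigma_k$, in an ambient $3$-manifold satisfying either a) or b), is diffeomorphic to $D^2$ and obeys $\tfrac{\varrho}{2}\h^2(\Sigma)+\sigma\h^1(\partial\Sigma)\le 2\pi$.} Granting this, the lemma follows once the two-sidedness hypothesis is removed, which I would do by a double-cover reduction at the end.

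To prove the intermediate statement, the first step is to produce a positive Jacobi field. Since $\Sigma$ is two-sided, away from the finite set one can write the $\Sigma_k$ as an ordered disjoint union of $m\ge 2$ normal graphs $u^1_k<\dots<u^m_k$ over $\Sigma$, all tending to $0$ in $C^\infty_{loc}$; the gap $w_k:=u^1_k-u^2_k<0$ of the two lowest sheets solves a linear elliptic equation whose coefficients converge to those of the Jacobi operator of $\Sigma$, together with a Robin-type boundary condition whose limit is exactly the linearised free boundary condition $\partial_\nu w=-II(\epsilon,\epsilon)w$ on $\partial\Sigma$. Normalising $w_k$ on an exhaustion of $\Sigma$ minus the finite set and invoking interior and boundary elliptic estimates, a diagonal subsequence converges to a non-positive, non-trivial Jacobi field $\phi$ satisfying this Robin condition; the interior strong maximum principle together with the Hopf boundary lemma (with which the Robin condition is compatible, rather than obstructed) force $\phi<0$ everywhere up to and including $\partial\Sigma$, so $-\phi>0$ is the desired positive Jacobi field. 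A standard argument (positive Jacobi field $\Rightarrow$ stability) combined with a logarithmic cutoff -- using that points have zero $W^{1,2}$-capacity on a surface -- upgrades stability of $\Sigma$ away from the finite set to genuine stability of $\Sigma$. Then Lemma \ref{lem:stable}(1) gives $\tfrac{\varrho}{2}\h^2(\Sigma)+\sigma\h^1(\partial\Sigma)\le 2\pi\chi(\Sigma)\le 2\pi$; under either a) or b) the left-hand side is strictly positive (here $\partial\Sigma\ne\emptyset$: otherwise $\Sigma$ would be closed with $\h^1(\partial\Sigma_k)\to 0$, no boundary mass being able to escape to the finite set as in the proof of Proposition \ref{prop:boundaryconv}, while $\h^2(\Sigma_k)$ stays bounded below, contradicting White's isoperimetric inequality \cite{Whi09}), so $\chi(\Sigma)=1$ and $\Sigma\simeq D^2$, the only compact surface with non-empty boundary and Euler characteristic one.

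To deduce Lemma \ref{lem:stable-type}, suppose for contradiction that $M$ is one-sided. Let $\widehat{M}\to M$ be the connected double cover determined by the (non-trivial) normal bundle of $M$; concretely, pass to the $\mathbb{Z}_2$-cover $\widehat{U}$ of a tubular neighbourhood $U$ of $M$ in $N$, in which $\widehat{M}$ is a compact, connected, two-sided free boundary minimal surface, the scalar curvature and the boundary mean curvature still satisfy a) resp. b) (the covering map being a local isometry), and for $k$ large enough (so that $M_k\subset U$) the lifts $\widehat{M}_k$ are embedded free boundary minimal surfaces converging smoothly to $\widehat{M}$, with multiplicity $m\ge 2$, away from the preimage of $\mathcal{Y}$. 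The intermediate statement applies to $\widehat{M}$ -- its stability inequality being local in nature, no global compactness of $\widehat{U}$ is needed -- and yields $\widehat{M}\simeq D^2$; but then $1=\chi(\widehat{M})=2\chi(M)$, which is impossible. Hence $M$ is two-sided, the intermediate statement applies to $M$ directly, and the lemma follows.

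The main obstacle, and the step requiring the most care, is the construction and regularity of the limiting Jacobi field near the boundary: one must check that the free boundary (Robin) condition persists in the limit, that boundary elliptic estimates supply enough compactness to pass to a limit, and -- crucially -- that the Hopf boundary lemma in the presence of the Robin condition still delivers strict positivity up to $\partial\Sigma$, which is exactly the delicate point already flagged in the proof of Lemma \ref{lemma:local}(2). The double-cover reduction is conceptually transparent but involves some bookkeeping (checking that a compact-enough piece of $\widehat{U}$ carries the relevant hypotheses and that embeddedness, multiplicity and the free boundary condition all lift correctly); I expect this part to be routine.
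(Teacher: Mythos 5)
Your proof follows essentially the same route as the paper's: the two-sided case is handled by constructing a positive Jacobi field from the gap between the two lowest sheets and upgrading stability away from $\mathcal{Y}$ to genuine stability (the paper compresses this to a citation of Section 5 of \cite{ACS17}, which is what you reconstruct), then applying Lemma \ref{lem:stable}(1); the one-sided case is excluded by passing to the double cover $\widehat{M}\subset\widehat{U}$ of a tubular neighbourhood (the paper cites Section 6 of \cite{ACS17} for this construction). Your contradiction in the double-cover step -- $1=\chi(\widehat{M})=2\chi(M)$ has no integer solution -- is a slightly different endgame from the paper's, which argues instead that a disk admits no fixed-point-free involution by Brouwer's theorem; both are correct, and yours is if anything more elementary.

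There is one genuine flaw, though a self-contained one. Your parenthetical argument that $\partial\Sigma\neq\emptyset$ is circular: you invoke White's isoperimetric inequality \cite[Theorem 2.1]{Whi09} to derive a contradiction from $\h^1(\partial\Sigma_k)\to 0$ while $\h^2(\Sigma_k)$ is bounded below, but that theorem establishes precisely the \emph{equivalence} between such an isoperimetric inequality holding and $N$ containing no smooth closed embedded minimal surface. If $\Sigma$ were such a closed surface -- the very scenario you are trying to rule out -- the isoperimetric inequality would fail and no contradiction would follow. Under hypothesis a) it is actually used elsewhere in the paper (Lemma \ref{lem:area}) only together with the extra assumption of no closed minimal surfaces, so you cannot import it freely here. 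The paper sidesteps the issue entirely: it reads $\partial M\neq\emptyset$ into the hypothesis that $M$ is a free boundary minimal surface, consistent with the definition of $\mathfrak{M}$ and the way the lemma is applied in Theorems \ref{thm:conv1} and \ref{thm:conv2}. The cleanest fix for your write-up is simply to take $\partial M\neq\emptyset$ as part of the standing hypotheses (as the paper implicitly does), rather than to attempt to derive it.
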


    \begin{proof}
    	Let us first consider the case when $M$ is two-sided: the geometric assumption we made implies that $M$ must be stable (cf. Section 5 of \cite{ACS17}) and thus the stability inequality, cf. \eqref{eq:STAB}, together with \eqref{eq:chiFBMS}
    	imply that it has positive Euler characteristic so that it must be a disk.
    	We now claim that the case when $M$ is one-sided \textsl{does not occur}.  If $M$ were one-sided, we could consider the construction of the twofold cover $\widetilde{M}\subset\widetilde{U}$ (as discussed, for the free boundary case, in Section 6 of \cite{ACS17}).
    	The local picture around any given point is unchanged, so one still has that the convergence of $\widetilde{M}_k$ to $\widetilde{M}$ happens with the same multiplicity $m\geq 2$. Thus $\widetilde{M}$ would be stable and the argument above would apply. Hence $\widetilde{M}$ would be a disk, but a disk is not the double cover of any compact surface with boundary because any (continuous) automorphism of the disk has a fixed point. Thus, this contradiction proves the claim.
    	Lastly, the geometric bounds above comes directly from case (1) of Lemma \ref{lem:stable}.
    \end{proof}


\begin{thebibliography}{99}
    	
    	
    	
    	\bibitem{ABCS18}
    	L. Ambrozio, R. Buzano, A. Carlotto, B. Sharp, \textit{Geometric convergence results for closed minimal surfaces via bubbling analysis}, preprint (arXiv: 1803.04956).
    	
    	\bibitem{ACS15}
    	L. Ambrozio, A. Carlotto, B. Sharp, \textit{Compactness of the space of minimal hypersurfaces with bounded volume and $p$-th Jacobi eigenvalue},  J. Geom. Anal. \textbf{26} (2016), no. 4, 2591-2601.
    	
    	\bibitem{ACS17}
    	L. Ambrozio, A. Carlotto, B. Sharp, \textit{Compactness analysis for free boundary minimal hypersurfaces}, Calc. Var. PDE  \textbf{57} (2018), issue  1, 1-39.
    	
    		
    		
    			\bibitem{ACS18b}
    			L. Ambrozio, A. Carlotto, B. Sharp, \textit{Index estimates for free boundary minimal hypersurfaces},  Math. Ann., \textbf{370} (2018), no. 3-4, pages 1063-1078.
    			
    
    	\bibitem{BS17}
    	R. Buzano, B. Sharp, \textit{Qualitative and quantitative estimates for minimal hypersurfaces with bounded index and area}, Trans. Amer. Math. Soc., \textbf{370} (2018), 4373-4399.
    	
    	
   	\bibitem{CT94}
    		S.~Y.~Cheng and J.~Tysk, \textit{Schr\"odinger operators and index bounds for minimal submanifolds}, Rocky Mountain J. Math. \textbf{24} (1994), no. 3, 977-996.
    		
    	\bibitem{CFP14}
    	J. Chen, A. Fraser, C. Pang, \textit{Minimal immersions of compact bordered Riemann surfaces with free boundary}, Trans. Amer. Math. Soc., \textbf{367} (2014), no. 4, 2487-2507. 
      
    	
   
    	\bibitem{CM14}
    	O. Chodosh, D. Maximo, \textit{On the topology and index of minimal surfaces},  J. Differential Geom. \textbf{104} (2016), no. 3, 399-418.
    	
 
 \bibitem{Cou40} R. Courant, \textit{The existence of minimal surfaces of given topological structure under prescribed boundary conditions}, Acta Math. \textbf{72} (1940), 51-98.
 
 \bibitem{Cou50} R. Courant, \textit{Dirichlet's Principle, Conformal Mapping, and Minimal Surfaces. Appendix by M. Schiffer}, Interscience Publishers, Inc., New York, N.Y., 1950. xiii+330 pp.
 
 \bibitem{DR16}
 C. De Lellis, J. Ramic, \textit{Min-max theory for minimal hypersurfaces with boundary}, preprint (arXiv:1611.00926).
 
 
 
    	\bibitem{dCP79}
    	M. do Carmo, C. K. Peng, \textit{Stable complete minimal surfaces in $\R^3$ are planes}, Bull. Amer. Math. Soc. (N.S.) \textbf{1} (1979), no. 6, 903-906.
    	
    		
  		\bibitem{EM08}
    		N. Ejiri and M. Micallef, \textit{Comparison between second variation of area and second variation of energy of a minimal surface}, Adv. Calc. Var. \textbf{1} (2008), no. 3, 223-239.
    	
    	\bibitem{FC85}
    	D. Fischer-Colbrie, \textit{On complete minimal surfaces with finite index in three manifolds}, Invent. Math. \textbf{82} (1985), 121-132.
    	
    	\bibitem{FCS80}
    	D. Fischer-Colbrie, R. Schoen, \textit{The structure of complete stable minimal surfaces in 3-manifolds of nonnegative scalar curvature}, Comm. Pure Appl. Math. \textbf{33} (1980), no. 2, 199-211.
    	
    	\bibitem{FPZ15} A. Folha, F. Pacard, T. Zolotareva, \textit{Free boundary minimal surfaces in the unit $3$-ball}, Free boundary minimal surfaces in the unit 3-ball,
    	Manuscripta Math. \textbf{154} (2017), no. 3-4, 359-409. 
    	
    	\bibitem{Fra00}
    	A. Fraser, \textit{On the free boundary variational problem for minimal disks}, Comm. Pure Appl. Math. \textbf{53} (2000), no. 8, 931-971.
    	
    	
    	\bibitem{FL14}
    	A. Fraser, M. Li, \textit{Compactness of the space of embedded minimal surfaces with free boundary in three-manifolds with nonnegative Ricci curvature and convex boundary}, J. Differential Geom. \textbf{96} (2014), no. 2 , 183-200.
    	
    	
    	
    	\bibitem{FS16}
    	A. Fraser, R. Schoen, \textit{Sharp eigenvalue bounds and minimal surfaces in the ball}, Invent. Math. \textbf{203} (2016), no. 3, 823-890.
    	
    	\bibitem{FGM16}
    	B. Freidin, M. Gulian, P. McGrath, \textit{Free boundary minimal surfaces in the unit ball with low cohomgeneity}, Proc. Amer. Math. Soc. \textbf{145} (2017), no. 4, 1671-1683. 
    	
    	
    	\bibitem{GJ86A}
    	M. Gr\"uter, J. Jost, \textit{On embedded minimal disks in convex bodies}, Ann. Inst. H. Poincar\'e Anal. Non Linéaire \textbf{3} (1986), no. 5, 345-390.
    	
    	\bibitem{GZ18}
    	Q. Guang, X. Zhou, \textit{Compactness and generic finiteness for free boundary minimal hypersurfaces}, preprint (arXiv:1803.01509).
    	
    		
    			\bibitem{HM90}
    			D. Hoffman, W. H. Meeks III, \textit{The strong halfspace theorem for minimal surfaces}, Invent. Math. \textbf{101}(2) (1990), 373-377.
    		
    
    		
    			\bibitem{KL17}
    			N. Kapouleas, M. Li \textit{Free boundary minimal surfaces in the unit three-ball via desingularization of the critical catenoid and equatorial disk}, preprint (arXiv: 1709.08556).
    			
    		
    		
    			\bibitem{KW17}
    			N. Kapouleas, D. Wygul \textit{Free-boundary minimal surfaces with connected boundary in the 3-ball by tripling the equatorial disc}, preprint (arXiv: 1711.00818).
    			
    			
    			
    	
    
    	\bibitem{JM83}
    	L. Jorge, W. H. Meeks III,
    	\textit{The topology of complete minimal surfaces of finite total Gaussian curvature}, 
    	Topology \textbf{22} (1983), no. 2, 203-221. 
    	
    	\bibitem{Jo86}
    	J. Jost, \textit{Existence results for embedded minimal surfaces of controlled topological type. I}, Ann. Scuola Norm. Sup. Pisa Cl. Sci. (4) \textbf{13} (1986), no. 1, 15-50.
    	
    	\bibitem{Jo86b}
      J. Jost, \textit{Existence results for embedded minimal surfaces of controlled topological type. II}, Ann. Scuola Norm. Sup. Pisa Cl. Sci. (4) \textbf{13} (1986), no. 3, 401-426.
    	
    	\bibitem{Jo87}
    	 J. Jost, \textit{Existence results for embedded minimal surfaces of controlled topological type. III}, Ann. Scuola Norm. Sup. Pisa Cl. Sci. (4) \textbf{14} (1987), no. 1, 165-167.
    	
    	
    	\bibitem{Ket16A}
    	D. Ketover, \textit{Free boundary minimal surfaces of unbounded genus}, preprint (arXiv:1612.08691).
    	
    	
    	\bibitem{Li15}
    	M. Li, \textit{A general existence theorem for embedded minimal surfaces with free boundary}, Comm. Pure Appl. Math. \textbf{68} (2015), no. 2, 286-331.
    	
    	\bibitem{LZ16B}
    	M. Li, X. Zhou, \textit{Min-max theory for free boundary minimal hypersurfaces I - regularity theory}, preprint (arXiv:1611.02612).
    	
    	\bibitem{Lim17}
    	V. Lima, \textit{Bounds for the Morse index of free boundary minimal surfaces}, preprint (arXiv: 1710.10971).
    	
    	
    	  
    	  \bibitem{LR89}
    	  F. L\'opez, A. Ros, \textit{Complete minimal surfaces with index one and stable constant mean curvature surfaces}, Comment. Math. Helv. \textbf{64} (1989), no. 1, 34-43.
    	  
    	  
    	  \bibitem{LR91}
    	  F. L\'opez, A. Ros, \textit{On embedded complete minimal surfaces of genus zero}, J. Differential Geom. \textbf{33} (1991), no. 1, 293-300.
    	  
    	
    	\bibitem{MNS13}
    	D. M\'aximo, I. Nunes, G. Smith, \textit{Free boundary minimal annuli in convex three-manifolds}, J. Differential Geom. \textbf{106} (2017), no. 1, 139-186. 
    	
    	
    	
      
  
    	
    	\bibitem{Oss63}
    	R. Osserman, \textit{On complete minimal surfaces}, Arch. Rational Mech. Anal. \textbf{13} (1963), 392-404.
    	
    	\bibitem{Oss64} 
    	R. Osserman, \textit{Global properties of minimal surfaces in $E^3$ and $E^n$}, Ann. of Math. (2) \textbf{80} (1964), 340-364.
    	
    	
    	\bibitem{Pog81}
    	A. V. Pogorelov, \textit{On the stability of minimal surfaces},
    	Dokl. Akad. Nauk SSSR \textbf{260} (1981), no. 2, 293-295. 
    	
    	
    	\bibitem{RV95}
    	A. Ros, E. Vergasta, 
    	\textit{Stability for hypersurfaces of constant mean curvature with free boundary},
    	Geom. Dedicata \textbf{56} (1995), no. 1, 19-33. 
    	
    
    	
    	\bibitem{Sc83}
    	R. Schoen, \textit{Uniqueness, symmetry, and embeddedness of minimal surfaces}, J. Differential Geom. \textbf{18} (1983), 791-809.
    	
    	\bibitem{SS81}
    	R. Schoen, L. Simon, \textit{Regularity of stable minimal hypersurfaces}, Comm. Pure Appl. Math. 34 (1981), no. 6, 741-797. 
    	
    
    	\bibitem{Sha15}
    	B. Sharp, \textit{Compactness of minimal hypersurfaces with bounded index},  J. Differential Geom. \textbf{106} (2017), no. 2, 317-339. 
    	
    	
    	
    	
    	
    	
    \bibitem{Str84}
    M. Struwe, \textit{On a free boundary problem for minimal surfaces}, Invent. Math. \textbf{75} (1984), no. 3, 547-560.
    	
    	
    	
    	
  	
    	
	\bibitem{T89}
   	J. Tysk, \textit{Finiteness of index and total scalar curvature for minimal hypersurfaces}, Proc. Amer. Math. Soc. \textbf{105}(2) (1989), 428-435.
    	
    	\bibitem{Wan99}
    	G. Wang, \textit{Birkhoff minimax principle for minimal surfaces with a free boundary}, Math. Ann. 314 (1999), no. 1, 89-107. 
    	
    	\bibitem{Whi09}
    	B. White, \textit{Which ambient spaces admit isoperimetric inequalities for submanifolds?}, J. Diff. Geom. \textbf{83} (2009), 213-228.
    	
    		\bibitem{W15}
    		B. White, \textit{On the Compactness Theorem for Embedded Minimal Surfaces in 3-manifolds with Locally Bounded Area and Genus}, Preprint (2015), preprint (arXiv:1503.02190).
    		
    	
    \end{thebibliography}
  \end{document}